\numberwithin{equation}{section}
\theoremstyle{plain}
\newtheorem{theorem}[subsection]{Theorem}
\newtheorem{proposition}[subsection]{Proposition}
\newtheorem{lemma}[subsection]{Lemma}
\newtheorem{corollary}[subsection]{Corollary}
\newtheorem{conjecture}[subsection]{Conjecture}
\theoremstyle{definition}
\newtheorem{definition}[subsection]{Definition}
\renewcommand{\leq}{\leqslant}
\renewcommand{\geq}{\geqslant}
\newsavebox{\proofbox}
\savebox{\proofbox}{\begin{picture}(7,7)%
  \put(0,0){\framebox(7,7){}}\end{picture}}
\newcommand\E{\mathbb{E}}
\newcommand\Z{\mathbb{Z}}
\newcommand\R{\mathbb{R}}
\newcommand\T{\mathbb{T}}
\newcommand\C{\mathbb{C}}
\newcommand\N{\mathbb{N}}
\newcommand\X{\mathcal{X}}
\newcommand\Y{\mathcal{Y}}
\newcommand\U{\mathcal{U}}
\newcommand\B{\mathcal{B}}
\newcommand\F{\mathbb{F}}
\newcommand\eps{\varepsilon}
\begin{document}

\title{Multiple recurrence and convergence results associated to $\F_{p}^{\omega}$-actions}

\subjclass{}

\author{Vitaly Bergelson}
\address{Department of Mathematics, The Ohio-State University, Columbus OH 43210}
\email{vitaly@@math.ohio-state.edu}

\author{Terence Tao}
\address{Department of Mathematics, UCLA, Los Angeles CA 90095-1555}
\email{tao@@math.ucla.edu}

\author{Tamar Ziegler}
\address{Department of Mathematics, Technion, Haifa Israel 32000}
\email{tamarzr@@tx.technion.ac.il}

\begin{abstract}  Using an ergodic inverse theorem obtained in our previous paper, we obtain limit formulae for multiple ergodic averages associated with the action of $\F_{p}^{\omega}=\oplus \F_p$.  From this we deduce multiple Khintchine-type recurrence results analogous to those for $\Z$-systems obtained by Bergelson, Host, and Kra, and also present some new counterexamples in this setting.
\end{abstract}

\maketitle

\section{Introduction}

The celebrated Szemer\'edi's theorem \cite{szemeredi} stating that any set of positive density in $\mathbb Z$ contains arbitrary long progressions, has a natural analogue for ``large'' sets in the group $\F_{p}^{\omega}=\oplus \F_p$, the direct sum of countably many copies of a finite field of prime order $p$. While the content of Szemer\'edi's theorem can be succinctly expressed by the maxim ``large sets in $\mathbb Z$ are AP-rich'' (where AP stands for Arithmetic Progression), the $\F_{p}^{\omega}$ analogue states that any ``large'' set in $\F_{p}^{\omega}$ is AS-rich, that is, it contains arbitrarily large Affine Subspaces. This analogy extends to the similitude between various proofs of these two theorems and is especially interesting when one studies the $\F_{p}^{\omega}$ analogues of various aspects of the ergodic approach to Szemer\'edi's theorem introduced by Furstenberg in \cite{furstenberg}. 

Given an invertible probability measure preserving system $(X,\X,\mu,T)$, and a set $A \in \X$ with $\mu(A)>0$ and
an integer $k \in \N$, let $\phi(n)=\mu(A \cap T^nA \cap \ldots \cap T^{kn}A)$. The sequence $\phi(n)$ can be viewed as a generalized positive definite sequence. The analysis of the properties and the asymptotic behavior of $\phi(n)$ leads to the proof and enhancements of Szemer\'edi's theorem. Not surprisingly, the study of the $\F_{p}^{\omega}$ analogue of $\phi(n)$ leads to the better understanding and enhancement of the $\F_{p}^{\omega}$ analogue of Szemer\'edi's theorem. It also throws new light on the various related facts belonging to the realm of ergodic theory.

In this paper we will describe (in Theorem \ref{char-thm} below) the characteristic factor for certain multiple ergodic averages on measure-preserving systems, in the case where the underlying group $G$ is an infinite-dimensional vector space $\F_p^\omega$ over a finite field; this is the analogue of the well-known description in  \cite{host-kra}, \cite{ziegler} of characteristic factors for multiple ergodic averages of $\Z$-actions.  Using this description, we can obtain explicit formulae for the limit of such multiple ergodic averages.  As an application of these formulae, we can establish multiple recurrence theorems of Khintchine type in some cases, and establish counterexamples to such theorems in other cases.

The detailed statements of the main results of our paper will be formulated at the end of the introduction.

\subsection{Convergence of multiple ergodic averages and limit formulae} 

Before we can properly state our main results, we need to set up a certain amount of notation regarding measure-preserving $G$-systems and their characteristic factors.

Let $G = (G,+)$ be a countable abelian group, and let $(X,\X,\mu)$ be a probability space, which we will always assume to be \emph{separable}\footnote{This assumption is not used explicitly in this paper, but is used in the paper \cite{bergelson-tao-ziegler} whose results we rely on, in order to perform certain measurable selections (see Appendix C of that paper) as well as disintegrations of measures.} in the sense that the $\sigma$-algebra $\X$ is countably generated modulo $\mu$-null sets; in most applications one can reduce to this situation without difficulty, so this will not be a serious restriction in practice.  An \emph{invertible measure-preserving transformation} on $X$ is an invertible map $T: X \to X$ with $T$ and $T^{-1}$ both measurable, such that $\mu(T_g(E))=\mu(E)$ for all $E \in \X$.  A \emph{measure-preserving $G$-action} on $X$ is a family $(T_g)_{g \in G}$ of invertible measure-preserving transformations $T_g: X \to X$, such that $T_g T_h = T_{g+h}$ and $T_0 = \operatorname{id}$ $\mu$-almost everywhere for all $g,h \in G$.  We refer to the quadruplet $\mathrm{X} = (X, \X, \mu, (T_g)_{g \in G})$ as a \emph{measure-preserving $G$-system}, or \emph{$G$-system} for short.  We abbreviate the (complex-valued) Lebesgue spaces $L^p(X,\X,\mu)$ for $1 \leq p \leq \infty$ as $L^p(\mathrm{X})$.  We adopt the usual convention of identifying two functions in $L^p(\mathrm{X})$ if they agree $\mu$-almost everywhere; in particular, this makes $L^2(\mathrm{X})$ a separable Hilbert space.

A \emph{F{\o}lner sequence} in $G$ is a sequence $(\Phi_n)_{n=1}^\infty$ of finite non-empty subsets of $G$ such that $$ \lim_{n \to \infty} \frac{|(g + \Phi_n) \Delta \Phi_n|}{|\Phi_n|} = 0$$
for all $g \in G$, where $|H|$ denotes the cardinality of a finite set $H$.  Note that we do not require the $\Phi_n$ to be nested, or to exhaust all of $G$.  Note that the class of F{\o}lner sequences is translation-invariant in the sense that if $(\Phi_n)_{n=1}^\infty$ is a F{\o}lner sequence, then $(g_n+\Phi_n)_{n=1}^\infty$ is also a F{\o}lner sequence for any $g_1, g_2, \ldots \in G$.  It is a classical fact that every countable abelian group is amenable \cite{neumann} and hence has at least one F{\o}lner sequence \cite{folner}; for instance, if $G=\Z$, one can take $\Phi_n := \{1,\ldots,n\}$.

The classical mean ergodic theorem\footnote{This theorem is usually stated in textbooks for $\Z$-systems, but the proof extends without difficulty to actions by other amenable groups. See for example \cite[Theorem 6.4.15]{bergelson}.} asserts, among other things, that if $G = (G,+)$ is a countable abelian group with F{\o}lner sequence $(\Phi_n)_{n=1}^\infty$, $\mathrm{X}=(X,\X,\mu,(T_g)_{g\in G})$ is a $G$-system and $f \in L^2(\mathrm{X})$, then the limit
\begin{equation}\label{tgf}
 \lim_{n \to \infty} \E_{g \in \Phi_n} T_g f
\end{equation}
converges strongly in $L^2(\mathrm{X})$ norm, where we use the averaging notation
$$
\E_{h \in H} := \frac{1}{|H|} \sum_{h \in H}
$$
for any non-empty finite set $H$, and also write $T_g f$ for $f \circ T_g$.  Since strong convergence in $L^2(\mathrm{X})$ implies weak convergence, we obtain as a corollary that the limit
\begin{equation}\label{tgf2}
 \lim_{n \to \infty} \E_{g \in \Phi_n} \int_X f_0 T_g f_1\ d\mu
\end{equation}
exists for all $f_0,f_1 \in L^2(\mathrm{X})$.

The mean ergodic theorem not only gives existence of these limits, but provides a formula for the value of these limits.  To describe this formula we need some more notation. Define a \emph{factor} $(Y,\Y,\nu,(S_g)_{g \in G},\pi) = (\mathrm{Y},\pi)$ of a $G$-system $(X,\X,\mu,(T_g)_{g \in G})$ to be another $G$-system $\mathrm{Y} = (Y,\Y,\nu,(S_g)_{g \in G})$, together with a measurable map $\pi: X \to Y$ which respects the measure in the sense that $\mu(\pi^{-1}(E)) = \nu(E)$ for all $E \in \Y$ (or equivalently, $\pi_* \mu= \nu$), and also respects the $G$-action in the sense that $S_g \circ \pi = \pi \circ T_g$ $\mu$-a.e. for all $g \in G$.  For instance, if $\B$ is a sub-$\sigma$-algebra of $\X$ which is invariant with respect to the $G$-action $(T_g)_{g \in G}$, then 
$(X, \B, \mu\downharpoonright_\B, (T_g)_{g \in G}, \operatorname{id})$ is a factor of $(X,\X,\mu, (T_g)_{g \in G})$, where $\mu\downharpoonright_\B$ is the restriction of $\X$ to $\B$.  By abuse of notation, we will thus refer to an invariant sub-$\sigma$-algebra $\B$ as a factor of $(X,\X,\mu, (T_g)_{g \in G})$.  We call two factors $(Y,\Y,\nu,(S_g)_{g \in G},\pi)$, $(Y',\Y',\nu',(S'_g)_{g \in G},\pi')$ \emph{equivalent} if the sub-$\sigma$-algebras $\{ \pi^{-1}(E): E \in \Y \}$, $\{ (\pi')^{-1}(E): E \in \Y' \}$ of $\X$ that they generate agree modulo null sets.  It is clear that every factor is equivalent to a unique invariant (modulo null sets) sub-$\sigma$-algebra of $\X$, so one may think of factors as invariant sub-$\sigma$-algebras if it is convenient to do so.

Given a factor $(Y,\Y,\nu,(S_g)_{g \in G},\pi) = (\mathrm{Y},\pi)$, we have a pullback map $\pi^*: L^2(\mathrm{Y}) \to L^2(\mathrm{X})$ defined by $\pi^* f := f \circ \pi$.  We define the pushforward map $\pi_*: L^2(\mathrm{X}) \to L^2(\mathrm{Y})$ to be the adjoint of this map.  In the case when the factor arises from an invariant sub-$\sigma$-algebra $\B$ of $\X$, the pushforward $\pi_* f$ is the same as the conditional expectation $\E(f|\B)$ of $f$ with respect to $\B$.

Given a $G$-system $\mathrm{X} = (X,\X,\mu,(T_g)_{g \in G})$, we define the \emph{invariant factor} $(\mathrm{Z}_0,\pi_0) = (\mathrm{Z}_0(\mathrm{X}),\pi_0) = (Z_0,\mathcal{Z}_0,\mu_0,(T_{g})_{g \in G},\pi_0)$ of $\mathrm{X}$ to be (up to equivalence\footnote{One can of course define $(\mathrm{Z}_0,\pi_0)$ canonically to actually \emph{be} the factor $(X, \X^T, \mu\downharpoonright_{\X^T}, (T_g)_{g \in G}, \operatorname{id})$, but it can be convenient to allow $\mathrm{Z}_0$ to only be defined up to equivalence, in order to take advantage of other models of the invariant factor which may be more convenient to compute with.}) the factor associated to the invariant $\sigma$-algebra $\X^T := \{ E \in \X: T_g E = E \hbox{ for all } g \in G \}$.    This factor is a \emph{characteristic factor} for the averages \eqref{tgf}, \eqref{tgf2}, in the sense that the limit in \eqref{tgf} converges strongly in $L^2(\mathrm{X})$ to zero whenever $(\pi_0)_* f$ vanishes, and similarly the limit in \eqref{tgf2} converges to zero when either of $(\pi_0)_* f_0$ or $(\pi_0)_* f_1$ vanishes (see \cite{fw}).  As a consequence, to compute the limits in \eqref{tgf}, one may freely replace $f$ by $(\pi_0)_* f$ (and descend from $\mathrm{X}$ to the factor $\mathrm{Z}_0$), and similarly for \eqref{tgf2}.  On the characteristic factor $\mathrm{Z}_0$, the action of $G$ is essentially trivial, and as a conclusion one obtains the well-known limit formulae
$$  \lim_{n \to \infty} \E_{g \in \Phi_n} T_g f = (\pi_0)^* (\pi_0)_* f$$
and
$$ \lim_{n \to \infty} \E_{g \in \Phi_n} \int_X f_0 T_g f_1\ d\mu = \int_{Z_0} \left((\pi_0)_* f_0\right) \left((\pi_0)_* f_1\right)\ d(\pi_0)_*\mu.$$

The situation is particularly simple when the $G$-system $\mathrm{X}$ is \emph{ergodic}, which means that the invariant $\sigma$-algebra $\X^T$ consists only of sets of full measure or empty measure, or equivalently that the invariant factor $\mathrm{Z}_0$ is  a point.  In this case, $(\pi_0)_* f = \int_X f\ d\mu$, and so
$$  \lim_{n \to \infty} \E_{g \in \Phi_n} T_g f = \int_X f\ d\mu$$
and
$$ \lim_{n \to \infty} \E_{g \in \Phi_n} \int_X f_0 T_g f_1\ d\mu = \left(\int_X f_0\ d\mu\right) \left(\int_X f_1\ d\mu\right).$$

This concludes our discussion of the classical ergodic averages. We now consider the more general \emph{multiple ergodic averages}
\begin{equation}\label{tgf-mult}
 \lim_{n \to \infty} \E_{g \in \Phi_n} (T_{c_1g} f_1) (T_{c_2g} f_2) \ldots (T_{c_kg} f_k)
\end{equation}
and
\begin{equation}\label{tgf2-mult}
 \lim_{n \to \infty} \E_{g \in \Phi_n} \int_X (T_{c_0 g} f_0) \ldots (T_{c_k g} f_k) \ d\mu
\end{equation}
associated to a $G$-system $\mathrm{X} = (X, \X, \mu, (T_g)_{g \in G})$, where $k \geq 1$ and $c_0,\ldots,c_k$ are integers, and (to avoid absolute integrability issues) $f_0,\ldots,f_k$ will be assumed now to lie in $L^\infty(\mathrm{X})$ rather than $L^2(\mathrm{X})$.  Note that in \eqref{tgf2-mult} we may collect terms if necessary and reduce to the case when the $c_0,\ldots,c_k$ are distinct.  Similarly, in \eqref{tgf-mult} we may reduce to the case when the $c_1,\ldots,c_k$ are distinct and non-zero (since zero coefficients can simply be factored out).  The reader can keep the model case $c_i=i$ in mind for this discussion, though for technical reasons it is convenient to consider more general coefficients $c_i$ as well.

The convergence and recurrence properties of these averages have been extensively studied in the literature, particularly in the model case $G=\Z$.  For instance, the celebrated Furstenberg multiple recurrence theorem \cite{furstenberg} asserts the lower bound
$$
 \liminf_{n \to \infty} \E_{g \in \Phi_n} \int_X f (T_g f) \ldots (T_{kg} f) \ d\mu > c>0
$$
in the $G=\Z$ case, whenever $k \geq 1$ and $f \in L^\infty(\mathrm{X})$ is non-negative and not identically zero, and where $c$ does not depend on the choice of $(\Phi_n)_{n=1}^{\infty}$.  The same result holds for all countable abelian groups $G$ \cite{furstenberg-katznelson}.  On the other hand, the original proofs of the multiple recurrence theorem did not actually establish the existence of the limit in \eqref{tgf-mult} or \eqref{tgf2-mult} for general $k > 1$.  In the case of $\Z$-actions, this was first achieved for $k=2$ in \cite{furstenberg}, for $k=3$ in \cite{zhang} (building upon a sequence of partial results in \cite{cl84,cl87,cl88,fw}). The case $k=4$ was established in \cite{hk} (see also \cite{hk4}) and independently in \cite{ziegler-t}. The methods in \cite{hk}, \cite{ziegler-t} were generalized to cover all $k \ge 1$ first in \cite{host-kra} and then in \cite{ziegler}.  After the general convergence of \eqref{tgf-mult}, \eqref{tgf2-mult} for $G=\Z$ was established, a number of additional proofs of this result (as well as generalisations thereof) have appeared in the literature  \cite{tao}, \cite{austin}, \cite{towsner}, \cite{host}, \cite{walsh}.  The argument in \cite{walsh} is in fact extremely general, and extends to averages over arbitrary countable abelian groups $G$, with the shifts $g, \ldots, (k-1) g$ replaced by polynomial functions of $g$ (see also \cite{zorin}).

Now we turn to the question of understanding the nature of the limit in \eqref{tgf-mult} or \eqref{tgf2-mult} for higher values of $k$ than $k=1$.  For simplicity, we will focus on the case of ergodic $G$-systems $\mathrm{X} = (X,\X,\mu,(T_g)_{g \in G})$; the results discussed here can then be extended to the non-ergodic case by ergodic decomposition (see e.g. \cite{varadarajan}).

The case $k=2$ can be analysed by spectral theory.  Define an \emph{eigenfunction} of an ergodic $G$-system $\mathrm{X}$ to be
a non-zero function $f \in L^2(\mathrm{X})$ such that for each $g \in G$ one has $T_g f = \lambda_g f$ for some complex number $\lambda_g$.  
Define the \emph{Kronecker factor} $(\mathrm{Z}_1,\pi_1) = (\mathrm{Z}_1(\mathrm{X}),\pi_1) = (Z_1, \Z_1, \mu_1, (T_{1,g})_{g \in G}, \pi_1)$ of $\mathrm{X}$ to be the factor (up to equivalence) associated to the sub-$\sigma$-algebra of $\X$ generated by the eigenfunctions of $\mathrm{X}$.  The Kronecker factor is (up to equivalence) is given by an abelian group rotation
$$ \mathrm{Z}_1 = (U, \U, m_U, (S_g)_{g \in G})$$
where $U = (U,+)$ is a compact abelian group with Borel $\sigma$-algebra $\U$ and Haar probability measure $m_U$, and each $S_g: U \to U$ is a group translation $S_g(x) := x + \alpha_g$, where $g \mapsto \alpha_g$ is a homomorphism from $G$ to $U$ (see \cite{zimmer} for a general form of this theorem).  Furthermore, this factor is ergodic (which is equivalent to the image of the homomorphism $g \mapsto \alpha_g$ being dense in $U$).  It is known (see e.g. \cite{bergelson-tao-ziegler}) that the Kronecker factor $\mathrm{Z}_1$ is characteristic for the $k=2$ averages \eqref{tgf-mult}, \eqref{tgf2-mult}, in the sense that the former average converges to zero in $L^2(\mathrm{X})$ norm when at least one of $(\pi_1)_* f_1, (\pi_1)_* f_2$ vanishes, and the latter average converges to zero when at least one of $(\pi_1)_* f_0, (\pi_1)_* f_1, (\pi_1)_* f_2$ vanishes.  From this, one can effectively replace each function $f_i$ by its pushforward $(\pi_1)_* f_i$ in the limits \eqref{tgf-mult}, \eqref{tgf2-mult} (and replacing $\mathrm{X}$ with $\mathrm{Z}_1$).  These limits can then be evaluated by harmonic analysis on $U$, resulting in the limit formula
\begin{equation}\label{limit-2}
\lim_{n \to \infty} \E_{g \in \Phi_n} \int_X f_0 (T_g f_1) (T_{2g} f_2) \ d\mu
= \int_U \int_U (\pi_1)_* f_0(h) (\pi_1)_* f_1(h+t) (\pi_1)_* f_2(h+2t)\ d\mu_U(h) d\mu_U(t)
\end{equation}
for \eqref{tgf2-mult} (in the model case $c_i=i$), and hence (by duality, and existence of the limit) a similar formula for \eqref{tgf-mult}; similarly for other choices of coefficients $c_i$.  We can rewrite this formula as
\begin{equation}\label{limit-2a}
\begin{aligned}
&\lim_{n \to \infty} \E_{g \in \Phi_n} \int_X f_0 (T_g f_1) (T_{2g} f_2) \ d\mu \\
&= \int_{HP_{0,1,2}(U)} (\pi_1)_* f_0(h_0) (\pi_1)_* f_1(h_1) (\pi_1)_* f_2(h_2)\ dm_{HP_{0,1,2}(U)}(h_0,h_1,h_2)
\end{aligned}\end{equation}
where $HP_{0,1,2}(U) \subset U^3$ is the closed subgroup
$$ HP_{0,1,2}(U) := \{ (h,h+t,h+2t): h,t \in U \}$$
of $U^3$, and $m_{HP_{0,1,2}(U)}$ is the Haar probability measure on $HP_{0,1,2}(U)$.  (The reason for the notation $HP_{0,1,2}$ will be made clearer later.)

In the case of $\Z$-actions, the limit of \eqref{tgf-mult}, \eqref{tgf2-mult} for higher values of $k$ is also understood; see \cite{ziegler-0}, \cite{ziegler}, \cite{bhk}.  For each value of $k$, a characteristic factor $\mathrm{Z}_k$ associated to the averages \eqref{tgf-mult}, \eqref{tgf2-mult} which (up to equivalence) is an inverse limit of nilsystems of step at most $k-1$ was constructed in \cite{host-kra} (see also \cite{ziegler}).  Projecting onto each such nilsystem and using the equidistribution theory on such nilsystems (see \cite{leibman}, \cite{ziegler-0}) a limit formula generalising \eqref{limit-2}, \eqref{limit-2a} (but for $\Z$-actions) was established; see \cite{ziegler}.   A closely related analysis was also performed in \cite{bhk}, which among other things led to the following Khintchine-type recurrence result: if $\mathrm{X} = (X, \X, \mu, (T_g)_{g \in \Z})$ is an ergodic $\Z$-system and $A \in \X$ has positive measure, then for every $\eps > 0$ and all $k=1,2,3$, the sets\footnote{The negative signs here are artifacts of our sign conventions, and can be easily removed if desired.}
$$ \{ n \in \Z: \mu( A \cap T_{-n} A \cap \ldots \cap T_{-kn} A ) \geq \mu(A)^{k+1} - \eps \}$$
are syndetic.  Surprisingly, this type of result fails for $k>3$; see \cite{bhk} for details.  

The arguments in \cite{bhk} also give a structural result for the correlation sequences\footnote{Strictly speaking, the results in \cite{bhk} are only claimed in the case $f_0=\ldots=f_{k}$ and $c_i=i$, but it is not difficult to see that the argument in fact applies in general.}
\begin{equation}\label{igdef}
 I_{c_0,\ldots,c_k; f_0,\ldots,f_k}(g) := \int_X (T_{c_0 g} f_0) (T_{c_1g} f_1) \ldots (T_{c_kg} f_k)\ d\mu
\end{equation}
for $k \geq 1$, $f_0,\ldots,f_k \in L^\infty(\mathrm{X})$ and $g \in \Z$ and distinct integers $c_1,\ldots,c_k$.  To state these results, recall that a \emph{$k-1$-step nilsequence} a uniform limit of sequences of the form $n \mapsto F(\theta^n \Gamma)$ for a $k-1$-step nilmanifold $N/\Gamma$, a group element $\theta \in N$, and a continuous function $F: N/\Gamma \to \C$; recall also that a bounded sequence $\sigma: G \to \C$ in a countable abelian group $G$ is said to converge to zero in uniform density if one has
$$ \lim_{n \to \infty} \sup_{h \in G} \E_{g \in h + \Phi_n} |\sigma(g)| = 0$$
for any F{\o}lner sequence $(\Phi_n)_{n=1}^\infty$.  Then it was shown in \cite{bhk} that the sequence $I_{c_0,\ldots,c_k; f_0,\ldots,f_k}$ can be decomposed as the sum of a $k-1$-step nilsequence and an error sequence $n \mapsto \sigma(n)$ which converges to zero in uniform density.  

\subsection{New results}

Having reviewed the preceding results, we now proceed to the description of new results in this paper, in which we focus on a family of countable abelian groups $G$ at the opposite end of the spectrum to the integers $\Z$, namely the infinite-dimensional vector space $G := \F_p^\omega=\oplus \F_p$ over a finite field $\F_p$ of prime order $p$, with a countable basis $e_1,e_2,\ldots$.  This can be viewed as the direct limit\footnote{Note that this limit is distinct from the \emph{inverse limit} $\prod_{i=1}^\infty \F_p$ of the $\F_p^n$; for instance, $\F_p^\omega$ is a countable vector space, whereas $\prod_{i=1}^{\infty} \F_p$ is an uncountable (but compact) group.}  of the finite-dimensional subspaces $\F_p^n$, defined as the span of $e_1,\ldots,e_n$, and a $G$-system can be viewed as a probability space with an infinite sequence $T_{e_n}: X \to X$ of commuting measure-preserving transformations, each of period $p$ in the sense that $T_{e_n}^p=\operatorname{id}$.  Observe that we can view these subspaces $\F_p^n$ as a F{\o}lner sequence for $\F_p^\omega$, but this is of course not the only such sequence (for instance, one can take the affine spaces $g_n + \F_p^n$, where $g_1, g_2, \ldots$ is an arbitrary sequence in $\F_p^\omega$).  One can then ask for a formula for the limits in \eqref{tgf-mult}, \eqref{tgf2-mult}, as well as a structure theorem for the correlation sequences \eqref{igdef} (now defined for $g \in G$ rather than $g \in \Z$).  

To state the results, we need to introduce a variant of the concept of nilsystem that is suitable for $\F_p^\omega$-actions, which we refer to as a \emph{Weyl system}.  To define such systems, we first need the notion of a \emph{polynomial function} on a $G$-system.

\begin{definition}[Polynomials] Let $G = (G,+)$ be a countable abelian group, let $U = (U,+)$ be an abelian group, and let $\mathrm{X} = (X,\X,\mu,(T_g)_{g \in G})$ be a measure preserving system.  For any measurable function $\rho: X \to U$ and $g \in G$, let $\Delta_g \rho: X \to U$ denote the function
$$ \Delta_g \rho(x) := \rho(T_g x) - \rho(x),$$
thus $\Delta_g$ can be viewed as a difference operator on the measurable functions from $X$ to $U$. 
If $k \geq 1$ is a natural number, we say that $\rho$ is a \emph{polynomial of degree $<k$} if $\Delta_{g_1} \ldots \Delta_{g_k} \rho(x) =0$ $\mu$-almost everywhere for any $g_1, \ldots, g_k \in G$.  We also adopt the convention that the zero function is the only polynomial of degree $<k$ if $k \leq 0$.

In a similar vein, a sequence $g: \Z \to U$ is said to be a polynomial of degree $<k$ if $\Delta_{h_1} \ldots \Delta_{h_k} g(n) = 0$ for all $h_1,\ldots,h_k,n \in \Z$, where $\Delta_h g(n) := g(n+h) - g(n)$, with the same convention as before if $k \leq 0$.
\end{definition}

Note that a measurable function $\rho: X \to \R/\Z$ is a polynomial of degree $<2$ if and only if the function $e^{2\pi i \rho}$ is an eigenfunction of the system $\mathrm{X}$.  Thus we see that the polynomials of degree $<2$ are closely related to the Kronecker factor, which in turn controls the $k=2$ averages \eqref{tgf-mult}, \eqref{tgf2-mult}.  More generally, we will see (in the case $G = \F_p^\omega$) that the polynomials of degree $<k$ control the averages  \eqref{tgf-mult}, \eqref{tgf2-mult}.  One can define polynomial maps between more general groups (not necessarily abelian); see \cite{leib-alg}.  However, we will not need this more general concept of a polynomial map here.

For future reference we observe (by an easy induction using Pascal's triangle) that a sequence $g: \Z \to U$ is a polynomial of degree $<k$ if and only if it has a discrete Taylor expansion of the form
$$ g(n) = \sum_{0 \leq j < k} \binom{n}{j} a_j$$
for some coefficients $a_j \in U$, where $\binom{n}{j} := \frac{n(n-1)\ldots(n-j+1)}{j!}$.  We remark that the top coefficient $a_{k-1}$ of $g(n)$ can also be computed as $a_{k-1} = \Delta_1^{k-1} g(n)$ for any $n$.

Next, we recall the notion of a \emph{cocycle extension}.

\begin{definition}[Cocycle extension]  Let $G = (G,+)$ be a countable abelian group, let $U = (U,+)$ be a compact abelian group, and let $\mathrm{X} = (X,\X,\mu,(T_g)_{g \in G})$ be a measure preserving system.  A \emph{$(G,\mathrm{X},U)$-cocycle} is a measurable function $\rho: G \times X \to U$ that obeys the \emph{cocycle equation}
\begin{equation}\label{cocycle}
 \rho(g+g', x) = \rho(g, T_{g'} x) + \rho(g', x)
\end{equation}
for all $g,g' \in G$ and $\mu$-almost all $x \in X$.  Given such a cocycle, we define the \emph{extension} $\mathrm{X} \times_\rho U$ of $\mathrm{X}$ by the cocycle $\rho$ to be the $G$-system given by the product probability space
$$ (X \times U, \X \times \U, \mu \times m_U)$$
(where $\U$ is the Borel $\sigma$-algebra on $U$, and $m_U$ the Haar probability measure on $U$), and the action $(\tilde T_g)_{g \in G}$ on $X \times U$ given by the formula
$$ \tilde T_g ( x, u ) := (T_g x, u + \rho(g,x)).$$
Note that the cocycle equation \eqref{cocycle} ensures that $\mathrm{X} \times_\rho U$ is indeed a $G$-system.  If $k$ is a positive integer, we say that the cocycle $\rho$ is a \emph{polynomial cocycle of degree $<k$} if, for each $g \in G$, the function $x \mapsto \rho(g,x)$ is a polynomial of degree $<k$.
\end{definition}

\begin{definition}[Weyl system]  Let $k \geq 0$ be an integer, and let $G = (G,+)$ be a countable abelian group.  We define a \emph{$k$-step Weyl $G$-system} recursively as follows:
\begin{itemize}
\item A $0$-step Weyl $G$-system is a point.
\item If $k \geq 1$, a $k$-step Weyl $G$-system is any system of the form $\mathrm{X} \times_{\rho_k} U_k$, where $\mathrm{X}$ is a Weyl $G$-system of order $k-1$, $U_k$ is a compact abelian group\footnote{In this paper, ``compact group'' is understood to be short for ``compact metrizable group''.}, and $\rho_k$ is a polynomial $(G, \mathrm{X}, U_k)$-cocycle of degree $<k$.
\end{itemize}
We define the notion of a \emph{continuous $k$-step Weyl system} similarly to a $k$-step Weyl system, except now that all the cocycles involved are also required to be continuous.  (Note that a Weyl system is a Cartesian product of compact spaces and is thus also compact.)
\end{definition}

Informally, a Weyl $G$-system of order $k$ takes the form
$$ U_1 \times_{\rho_2} U_2 \times_{\rho_3} \ldots \times_{\rho_k} U_k$$
for some compact abelian groups $U_1,\ldots,U_k$ (which we refer to as the \emph{structure groups} of the system) and polynomial cocycles $\rho_1,\ldots,\rho_k$ (the cocycle $\rho_1$ is essentially a homomorphism from $G$ to $U_1$ and is not explicitly shown in the above notation).  In the $k=1$ case, a Weyl $G$-system is simply a group rotation $T_g: u_0 \mapsto u_0 + \rho_1(g)$ on $U_1$.  

\begin{remark} In \cite{bergelson-tao-ziegler} we define the notion of an {\em Abramov  $\F_p^{\omega}$-system} $\text{Abr}_{<k}(\mathrm{X})$.  This is is a system where $P_{<k}(X)$ - the polynomials of degree $<k$ - span $L^2(X)$. We show that in the case where $k \le \text{char}(\F)$ an Abramov system can be given the structure of a Weyl system. \end{remark}

\begin{example}  Let $\mathrm{X}_1$ be the product space $\prod \F_p$ of sequences $(x_n)_{n=1}^\infty$ with $x_n \in \F_p$, with the product topology and the Haar probability measure.  It becomes a $1$-step Weyl $G$-system with $G := \F_p^\omega$ by using the shifts
$$ T_g ( x_n )_{n=1}^\infty := (x_n+g_n)_{n=1}^\infty$$
when $g = \sum_{n=1}^\infty g_n e_n$ with $g_n \in \F_p$ (and with all but finitely many of the $g_n$ vanishing).

Formally, we have a quadratic polynomial $Q: \prod \F_p \to \F_p$ defined by
$$ Q( (x_n)_{n=1}^\infty ) = \sum_{n=1}^\infty x_n x_{n+1},$$
however this polynomial is not actually well defined because the sum here can contain infinitely many non-zero terms.  However, if we compute a formal derivative $\Delta_g Q$ of this polynomial for some $g = \sum_{n=1}^\infty g_n e_n \in G$, we obtain
$$ \Delta_g Q = \sum_{n=1}^\infty g_n x_{n+1} + x_n g_{n+1} + g_n g_{n+1},$$
and this is a well-defined linear polynomial on $\prod \F_p$ because only finitely many of the $g_n$ are non-zero.  If we set $\rho_2(g,x) := \Delta_g Q(x)$, we see that $\rho_2(g,x)$ is a polynomial $(G,\mathrm{X}_1,\F_p)$-cocycle of degree $<2$.  If we then take the cocycle extension $\mathrm{X}_2:= \mathrm{X}_1 \times_2 \F_p$, then $\mathrm{X}_2$ is a $2$-step Weyl system with structure groups $\prod \F_p$ and $\F_p$, with shift given by
$$ T_g( (x_n)_{n=1}^\infty, t ) = ( (x_n+g_n)_{n=1}^\infty, t + \sum_{n=1}^\infty g_n x_{n+1} + x_n g_{n+1} + g_n g_{n+1}).$$
This system  can be viewed as a $G$-system analogue to a $2$-step nilsystem arising from the Heisenberg group.
\end{example}

The first main result, which is a corollary of our previous work in \cite{bergelson-tao-ziegler}, establishes the existence of a Weyl system as a characteristic factor for the averages \eqref{tgf-mult}, \eqref{tgf2-mult}:

\begin{theorem}[Characteristic factor]\label{char-thm}
Let $p$ be a prime, and let $1 \leq k < p$ be an integer.  Let $G := \F_p^\omega$, and let $\mathrm{X} = (X,\X,\mu,(T_g)_{g \in G})$ be an ergodic $G$-system.  Then for each $0 \leq k < p$, there exists a factor $(\mathrm{Z}_k,\pi_k) = (\mathrm{Z}_k(\mathrm{X}),\pi_k)$ of $\mathrm{X}$, with $\mathrm{Z}_k$ an ergodic continuous $k$-step Weyl system, with the following properties:
\begin{itemize}
\item[(i)]  (Recursive description)  One has $\mathrm{Z}_k = \mathrm{Z}_{k-1} \times_{\rho_k} U_k$ for some compact abelian group $U_k$ and some polynomial $(G, \mathrm{Z}_{k-1}, U_k)$-cocycle $\rho_k$ of degree $<k$.  Furthermore, $U_k$ is a $p$-torsion group (thus $pu_k=0$ for all $u_k \in U_k$, or equivalently $U_k$ is a vector space over $\F_p$).
\item[(ii)]  (Connection with polynomials) The sub-$\sigma$-algebra of $\X$ generated by $\mathrm{Z}_{k}$ is generated by the polynomials $\phi: X \to \R/\Z$ of degree $<k+1$.  (Thus, for instance, $\mathrm{Z}_1$ is the Kronecker factor.)
\item[(iii)]  ($\mathrm{Z}_{k-1}$ characteristic for \eqref{tgf-mult}) For distinct non-zero $c_1,\ldots,c_k \in \F_p \backslash \{0\}$, the averages \eqref{tgf-mult} converge strongly in $L^2(\mathrm{X})$ to zero for any F{\o}lner sequence $(\Phi_n)_{n=1}^\infty$ of $G$, whenever $f_1,\ldots,f_k \in L^\infty(\mathrm{X})$ is such that $(\pi_{k-1})_* f_i = 0$ for at least one $i=1,\ldots,k$;
\item[(iv)] ($\mathrm{Z}_{k-1}$ characteristic for \eqref{tgf2-mult}) For distinct $c_0,\ldots,c_k\in \F_p$, the average \eqref{tgf2-mult} converges to zero for any F{\o}lner sequence $(\Phi_n)_{n=1}^\infty$ of $G$, whenever $f_0,\ldots,f_k \in L^\infty(\mathrm{X})$ is such that $(\pi_{k-1})_* f_i = 0$ for at least one $i=0,\ldots,k$;
\item[(v)] ($\mathrm{Z}_{k}$ characteristic for \eqref{igdef}) For distinct $c_0,\ldots,c_k \in \F_p$, whenever $f_0,\ldots,f_{k} \in L^\infty(\mathrm{X})$ is such that $(\pi_{k})_* f_i = 0$ for some $i=0,\ldots,k-1$, the sequence $I_{c_0,\ldots,c_k;f_0,\ldots,f_{k}}: G \to \C$ converges to zero in uniform density.
\end{itemize}
\end{theorem}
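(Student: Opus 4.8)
The plan is to take $\mathrm{Z}_k$ to be the factor $\Abr_{<k+1}(\mathrm{X})$ of $\mathrm{X}$ generated by the polynomials $\phi\colon X\to\R/\Z$ of degree $<k+1$; then (ii) holds by construction, and $\mathrm{Z}_k$ is ergodic because $\mathrm{X}$ is (and for $k=0$ this factor is a point, as required). The substantive input is the structure theory of \cite{bergelson-tao-ziegler}: since $k+1\le p=\charac(\F_p)$, the Abramov factor $\Abr_{<k+1}(\mathrm{X})$ admits a model as a continuous $k$-step Weyl $G$-system, which is the principal assertion of the theorem. Property (i) then follows by inspecting this model: the Weyl tower $U_1\times_{\rho_2}\cdots\times_{\rho_k}U_k$ produced in \cite{bergelson-tao-ziegler} has the feature that its sub-tower through level $k-1$ is exactly $\Abr_{<k}(\mathrm{X})=\mathrm{Z}_{k-1}$, so that $\mathrm{Z}_k=\mathrm{Z}_{k-1}\times_{\rho_k}U_k$ with $\rho_k$ a polynomial cocycle of degree $<k$; and the $p$-torsion of $U_k$ reflects the period-$p$ nature of the generators $T_{e_n}$, a normalisation carried out in \cite{bergelson-tao-ziegler} precisely under the hypothesis $k<p$.

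For (iii) and (iv) we combine a van der Corput estimate with the ergodic inverse theorem. Over $G=\F_p^\omega$ the van der Corput step is especially clean, since multiplication by a nonzero scalar of $\F_p$ is an automorphism of $G$ carrying F{\o}lner sequences to F{\o}lner sequences; iterating it in the usual PET manner shows that the $L^2(\mathrm{X})$-limit of \eqref{tgf-mult}, for distinct nonzero $c_1,\dots,c_k$, is bounded by a power of the order-$k$ Host--Kra--Gowers seminorm $\|f_i\|_{U^k(\mathrm{X})}$ of $f_i$ (as defined in \cite{bergelson-tao-ziegler}) --- the seminorm whose null space, by the inverse theorem of \cite{bergelson-tao-ziegler}, is $\Abr_{<k}(\mathrm{X})=\mathrm{Z}_{k-1}$. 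For \eqref{tgf2-mult} one first substitutes $x\mapsto T_{-c_0g}x$ inside the integral, so that the surviving coefficients $c_i-c_0$ ($i=1,\dots,k$) are distinct and nonzero, then estimates the pairing against $f_0$ by Cauchy--Schwarz and applies the previous bound; by the symmetry of the integrated correlation under permuting the pairs $(c_i,f_i)$ one gets control by the order-$k$ seminorm of \emph{every} $f_i$. In both cases, if $(\pi_{k-1})_*f_i=0$ for some $i$ then that seminorm of $f_i$ vanishes, so the average is $0$, which is (iii) and (iv).

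For (v) the seminorm controlling the sequence (rather than its average) has one more degree, and the extra degree is produced by a tensor-square argument. To prove that $I:=I_{c_0,\dots,c_k;f_0,\dots,f_k}$ tends to zero in uniform density it suffices, by Cauchy--Schwarz, to show $\lim_n\sup_h\E_{g\in h+\Phi_n}|I(g)|^2=0$. But $|I(g)|^2$ is itself a correlation of the $k+1$ functions $f_i\otimes\overline{f_i}$ on the product $G$-system $\mathrm{X}\times\mathrm{X}$, with the same coefficients $c_i$. By (iv) applied to $\mathrm{X}\times\mathrm{X}$ (after ergodic decomposition if necessary), the characteristic factor of this correlation is a \emph{continuous} --- hence compact --- Weyl system, and on a compact Weyl system the F{\o}lner averages $\E_{g\in h+\Phi_n}(\cdot)$ equidistribute uniformly in the base point $h$; consequently $\lim_n\sup_h\E_{g\in h+\Phi_n}|I(g)|^2$ exists and coincides with the plain limit $\lim_n\E_{g\in\Phi_n}|I(g)|^2$. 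By (iv) again this plain limit is bounded by a power of $\|f_i\otimes\overline{f_i}\|_{U^k(\mathrm{X}\times\mathrm{X})}$; and the ergodic-product identity $\|f\|_{U^{k+1}(\mathrm{X})}^{2^{k+1}}=\|f\otimes\overline{f}\|_{U^{k}(\mathrm{X}\times\mathrm{X})}^{2^{k}}$ (valid since $\mathrm{X}$ is ergodic) converts this into a power of $\|f_i\|_{U^{k+1}(\mathrm{X})}$, whose null space is $\mathrm{Z}_k(\mathrm{X})$ by the inverse theorem. Hence $(\pi_k)_*f_i=0$ for some $i$ forces $I\to0$ in uniform density; the case $i=k$ reduces to $i\le k-1$ by permuting the pairs $(c_i,f_i)$.

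The routine ingredients here --- the van der Corput iterations, the measure-preserving changes of variable, and the product identity --- are entirely standard, and all the substantive content (the continuous Weyl model of $\Abr_{<k+1}(\mathrm{X})$ when $k<p$, and the identification of the Host--Kra--Gowers uniformity factors with the Abramov factors) is imported from \cite{bergelson-tao-ziegler}. The step I expect to require the most care is the reduction of the $\sup_h$ to a plain average in (v): establishing that the F{\o}lner averages of the correlation $|I(g)|^2$ converge uniformly in $h$ is exactly where the \emph{continuity} --- hence compactness, hence equidistribution --- of the Weyl model is essential, and it plays the role that the equidistribution theory of nilsequences plays in the $\Z$-analogue of \cite{bhk}.
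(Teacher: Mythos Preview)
Your treatment of (i)--(iv) is essentially the paper's: import the Weyl model and the identification $\|f\|_{U^k}=0 \Leftrightarrow (\pi_{k-1})_*f=0$ from \cite{bergelson-tao-ziegler}, and deduce (iii), (iv) from an iterated van der Corput estimate (the paper's Lemma~\ref{vdc} and Corollary~\ref{gvn-cor}(i),(ii)). That part is fine.

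The argument for (v), however, has a real gap. Two issues:

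\emph{The product identity.} The formula $\|f\|_{U^{k+1}(\mathrm{X})}^{2^{k+1}}=\|f\otimes\overline{f}\|_{U^{k}(\mathrm{X}\times\mathrm{X})}^{2^{k}}$ is not the recursive definition of the seminorm and is not true in general for the diagonal action on the product with product measure. The correct recursion is $\|f\|_{U^{k+1}}^{2^{k+1}}=\lim_n \E_{h\in\Phi_n}\|T_h f\cdot\overline f\|_{U^k}^{2^k}$, an average over shifts on a \emph{single} copy of $\mathrm{X}$; the Host--Kra measure $\mu^{[k+1]}$ on $X^{2^{k+1}}$ is not the $U^k$-cube measure of the product system $(X\times X,\mu\times\mu)$. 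Since $\mathrm{X}\times\mathrm{X}$ is typically non-ergodic, its $U^k$ seminorm involves the invariant factor of the product, and the identity you invoke would require exactly the kind of structural input you are trying to avoid.

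\emph{Removing the supremum.} Even granting a bound on $\lim_n\E_{g\in\Phi_n}|I(g)|^2$, the step ``on a compact Weyl system the F{\o}lner averages equidistribute uniformly in $h$, so $\lim_n\sup_h\E_{g\in h+\Phi_n}|I(g)|^2$ equals the plain limit'' is precisely the content of the later limit formulae (Lemmas~\ref{equidist}, \ref{equidist-2}), whose proofs \emph{use} Theorem~\ref{char-thm}. Invoking it here is circular.

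The paper handles (v) by a much more elementary device (Lemma~\ref{vdc-2}): first prove the un-sup'd bound
\[
\limsup_{n\to\infty}\E_{g\in\Phi_n}|I_{c_0,\ldots,c_k;f_0,\ldots,f_k}(g)|\le \|f_i\|_{U^{k+1}(\mathrm{X})}
\]
by one more round of van der Corput (insert $T_h$, Cauchy--Schwarz, and feed $(T_hf_j)\overline{T_{h'}f_j}$ into Lemma~\ref{vdc}); then, for each $n$, choose $h_n$ within $(1+\eps)$ of the supremum and observe that $(h_n+\Phi_n)_n$ is again a F{\o}lner sequence, so the same bound applies to it. Letting $\eps\to0$ gives the uniform-density statement directly, with no appeal to the product system, equidistribution, or ergodic decomposition. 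This F{\o}lner-translation trick is the missing idea in your argument for (v).
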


We establish this theorem in Section \ref{char-sec}.  Now we discuss some consequences of Theorem \ref{char-thm}.  We begin with a limit formula for the average \eqref{tgf2-mult}.  We will need the following construction:

\begin{definition}[Hall-Petresco groups]  Let $p$ be a prime, and let $U_1,\ldots,U_m$ be compact $p$-torsion groups for some $0 \leq m < p$.  Let $1 \leq k < p$, and let $c_0,\ldots,c_k \in \F_p$ be distinct.  Define the \emph{Hall-Petresco group} $HP_{c_0,\ldots,c_k}(U_1,\ldots,U_m)$ to be the closed subgroup of $(U_1 \times \ldots\times U_m)^{k+1}$ consisting of tuples of the form $(P(c_i))_{i=0}^k$, where $P = (P_1,\ldots,P_m): \Z \to U_1 \times \ldots \times U_m$ and for each $1 \leq j \leq m$, $P_j: \Z \to U_j$ is a polynomial of degree $<j+1$.

If $\mathrm{X} = U_1 \times_{\rho_2} U_2 \times_{\rho_3} \ldots \times_{\rho_m} U_m$ is an ergodic $m$-step Weyl system, we abbreviate
$HP_{c_0,\ldots,c_k}(U_1,\ldots,U_m)$ as $HP_{c_0,\ldots,c_k}(\mathrm{X})$.
\end{definition}

Thus, for instance
\begin{align*}
&HP_{0,1}(U_1, U_2, U_3 )\\
&\quad = \left\{ \left((a_1,a_2,a_3), (a_1+b_1, a_2+b_2, a_3+b_3)\right): a_1,b_1 \in U_1; a_2,b_2 \in U_2; a_3,b_3 \in U_3 \right\} \\
&\quad = (U_1 \times U_2 \times U_3)^2,
\end{align*}
and (for $p>2$)
\begin{align*}
&HP_{0,1,2}(U_1, U_2, U_3 )\\
&\quad= \biggl\{ \left((a_1,a_2,a_3), (a_1+b_1,a_2+b_2,a_3+b_3), (a_1+2b_1, a_2+2b_2+c_2, a_3+2b_3+c_3)\right): \\
&\quad\quad a_1,b_1 \in U_1; a_2,b_2,c_2 \in U_2; a_3,b_3,c_3 \in U_3 \biggr\} \\
&\quad= \{ (h_0,h_1,h_2) \in U_1 \times U_2 \times U_3: h_{01} - 2h_{11} + h_{21} = 0 \}
\end{align*}
(with the convention $h_i = (h_{i1}, h_{i2}, h_{i3})$), and (for $p>3$)
\begin{align*}
&HP_{0,1,2,3}(U_1, U_2, U_3 )\\
&\quad = \biggl\{ \bigl((a_1,a_2,a_3), (a_1+b_1,a_2+b_2,a_3+b_3), (a_1+2b_1, a_2+2b_2+c_2, a_3+2b_3+c_3), \\
&\quad\quad (a_1+3b_1, a_2+3b_2+3c_2, a_3+3b_3+3c_3+d_3) \bigr) : \\
&\quad\quad\quad a_1,b_1 \in U_1; a_2,b_2,c_2 \in U_2; a_3,b_3,c_3,d_3 \in U_3 \biggr\} \\
&\quad= \{ (h_0,h_1,h_2) \in U_1 \times U_2 \times U_3: h_{01} - 2h_{11} + h_{21} = h_{11}-2h_{21}+h_{31}= 0,\\
&\quad\quad h_{02}-3h_{12}+3h_{22}-h_{32} = 0 \}.
\end{align*}

The following lemma, which we prove in Section \ref{limit-sec}, asserts that the Hall-Petresco group $HP_{c_0,\ldots,c_k}(\mathrm{Z}_{k-1})$ controls the equidistribution of progressions $(T_{c_0 g} x, \ldots, T_{c_k g} x)$ in $\mathrm{X}$:

\begin{lemma}[Limit formula]\label{equidist}  Let $p$ be a prime, let $1 \leq k < p$ be an integer, and let $c_0,\ldots,c_k \in \F_p$ be distinct.  Let $G := \F_p^\omega$, and let $\mathrm{X} = (X,\X,\mu,(T_g)_{g \in G})$ be an ergodic $G$-system.  Let $f_0,\ldots,f_k \in L^\infty(\mathrm{X})$, and let $(\Phi_n)_{n=1}^\infty$ be a F{\o}lner sequence in $G$. Then we have
\begin{equation}\label{formula}
 \lim_{n \to\infty} \E_{g \in \Phi_n} \int_X (T_{c_0} g f_0) \ldots (T_{c_kg} f_k)\ d\mu = \int_{HP_{c_0,\ldots,c_k}(\mathrm{Z}_{k-1})} (\pi_{k-1})_* f_0 \otimes\ldots \otimes (\pi_{k-1})_* f_k \ dm_{HP_{c_0,\ldots,c_k}(\mathrm{Z}_{k-1})}
\end{equation}
where $(\mathrm{Z}_{k-1}, \pi_{k-1})$ is the characteristic factor from Theorem \ref{char-thm}, $m_{HP_{c_0,\ldots,c_k}(\mathrm{Z}_{k-1})}$ is the Haar probability measure on $HP_{c_0,\ldots,c_k}(\mathrm{Z}_{k-1})$, and $(\pi_{k-1})_* f_0 \otimes \ldots \otimes (\pi_{k-1})_* f_k: \mathrm{Z}_{k-1}^{k+1} \to \C$ is the tensor product
$$ (\pi_{k-1})_*f_0 \otimes \ldots \otimes (\pi_{k-1})_*f_k(x_0,\ldots,x_k) := (\pi_{k-1})_* f_0(x_0) \ldots (\pi_{k-1})_* f_k(x_k).$$
The right-hand side of \eqref{formula} can also be written more explicitly as
$$ \int_{U_1^2 \times \ldots \times U_{k-1}^{k}} \prod_{i=0}^k (\pi_{k-1})_* f_i( ( \sum_{l=0}^j \binom{c_i}{l} a_{jl} )_{j=1}^{k-1} )$$
where the integral is over all tuples $(a_{jl})_{1 \leq j \leq k-1; 0 \leq l \leq j}$ with $a_{jl} \in U_j$, integrated using the product Haar measure on $U_1^2 \times \ldots \times U_{k-1}^{k}$, and $U_1,\ldots,U_{k-1}$ are the structure groups of $\mathrm{Z}_{k-1}$.
\end{lemma}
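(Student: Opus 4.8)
The plan is to deduce \eqref{formula} from Theorem~\ref{char-thm} in three moves: (i) descend to the characteristic factor $\mathrm{Z}_{k-1}$; (ii) rewrite the left side of \eqref{formula} as the pairing of a single function with a sequence of probability measures $\mu_n$ on $\mathrm{Z}_{k-1}^{k+1}$ built from the ``diagonal orbit''; and (iii) show $\mu_n \to m_{HP}$ weak-$*$, writing $HP := HP_{c_0,\dots,c_k}(\mathrm{Z}_{k-1})$.

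\emph{Step (i): reductions.} Write $f_i = \pi_{k-1}^*(\pi_{k-1})_*f_i + e_i$, so that $(\pi_{k-1})_*e_i = 0$. Expanding the $(k+1)$-linear left side of \eqref{formula} and applying Theorem~\ref{char-thm}(iv), every term containing some $e_i$ has F\o lner average tending to $0$; the remaining term, because $\pi_{k-1}$ is measure preserving and intertwines the $G$-actions, is exactly the average \eqref{tgf2-mult} over $\mathrm{Z}_{k-1}$ applied to the functions $(\pi_{k-1})_*f_i$, while the right side of \eqref{formula} is unchanged. So I may take $\mathrm{X} = \mathrm{Z}_{k-1}$, an ergodic continuous $(k-1)$-step Weyl system with structure groups $U_1,\dots,U_{k-1}$, each $p$-torsion by Theorem~\ref{char-thm}(i), and $f_i \in L^\infty(\mathrm{Z}_{k-1})$. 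Since $\mathrm{Z}_{k-1}$ is compact metrizable, since every coordinate marginal of $m_{HP}$ equals the Haar measure $m$ of $\mathrm{Z}_{k-1}$ (the $U_j$-part of the $i$-th coordinate contains the Taylor term $\binom{c_i}{0}a_{j0} = a_{j0}$, hence is Haar on $U_j$), and since the $(k+1)$-linear average is Lipschitz in each slot for the $L^1(m)$ norm (telescoping, the $S_g$ preserving $L^1$), a density argument reduces to $f_0,\dots,f_k \in C(\mathrm{Z}_{k-1})$. Put $R_g := S_{c_0g}\times\cdots\times S_{c_kg}$ on $\mathrm{Z}_{k-1}^{k+1}$, let $\iota\colon Z_{k-1}\to Z_{k-1}^{k+1}$ be the diagonal embedding, and set $\mu_n := \E_{g\in\Phi_n}(R_g)_*\iota_*m$; then Fubini rewrites the left side of \eqref{formula} as $\int_{Z_{k-1}^{k+1}} f_0\otimes\cdots\otimes f_k\,d\mu_n$. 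As the space is compact metrizable it now suffices to show every weak-$*$ limit point $\nu$ of $(\mu_n)$ equals $m_{HP}$; then $\mu_n\to m_{HP}$, and both the existence and the value of the limit in \eqref{formula} follow.

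\emph{Step (ii): easy properties of the limit.} (a) Each $\mu_n$, hence each $\nu$, is supported on $HP$: for fixed $g\in G$ and $x\in Z_{k-1}$ the sequence $n\mapsto S_{ng}x$ has, in the $j$-th structure group, a component which is a polynomial of degree $<j+1$ in $n$ --- by induction on $j$ from the cocycle equation \eqref{cocycle} and the fact that $\Delta_g$ lowers the degree of a polynomial cocycle by one (part of the polynomial calculus of \cite{bergelson-tao-ziegler}) --- so $(S_{c_0g}x,\dots,S_{c_kg}x)\in HP$ for all $g,x$. (b) Every $\nu$ is $R_g$-invariant for each $g$, since $\|\mu_n-(R_g)_*\mu_n\|\le |\Phi_n\triangle(g+\Phi_n)|/|\Phi_n|\to 0$ by the F\o lner property.

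\emph{Step (iii): the main point.} I would prove, by induction on the number $m\ge 0$ of structure groups, that for \emph{every} ergodic continuous Weyl system $\mathrm{Z}$ with $\le m$ structure groups the corresponding measures converge to $m_{HP_{c_0,\dots,c_k}(\mathrm{Z})}$; the case $m=0$ is trivial. For the step write $\mathrm{Z}_{k-1}=\mathrm{Z}'\times_\rho U$ with $U=U_{k-1}$ a compact $p$-torsion group, $\rho$ a polynomial $(G,\mathrm{Z}',U)$-cocycle of degree $<k-1$, and $\mathrm{Z}'$ --- having one fewer structure group, and being a factor of the ergodic $\mathrm{Z}_{k-1}$ --- again an ergodic continuous Weyl system. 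The factor map $Z_{k-1}^{k+1}\to(Z')^{k+1}$ intertwines the actions and pushes $m$ to $m_{Z'}$, hence carries $\mu_n$ to the analogous sequence for $\mathrm{Z}'$, so by induction $\nu$ projects to $m_{HP(\mathrm{Z}')}$. It remains to control the top structure group: expanding the $U$-dependence of each $f_i$ in a character $\psi_i\in\widehat U$, the extra factor one must average is the cocycle weight $\prod_{i=0}^k\psi_i(\rho(c_ig,w_i))$, and since $c\mapsto\rho(cg,w)$ is a polynomial of degree $<k$ in $c$ (again by the $\Delta$-calculus) this is once more a ``degree $<k$ polynomial'' datum in the top coordinate. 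One shows --- and this is where the ergodicity of $\mathrm{Z}_{k-1}=\mathrm{Z}'\times_\rho U$ is indispensable --- that such a weight has vanishing F\o lner average against the (by the inductive hypothesis, already equidistributed) base orbit unless it is cohomologically trivial, which forces $(\psi_i)$ to satisfy exactly the linear relations cutting out $HP$ in the top coordinate; assembling the surviving terms via Plancherel on the $U_j$ yields $\int_{HP} f_0\otimes\cdots\otimes f_k\,dm_{HP}$. Hence $\nu=m_{HP}$.

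\emph{Step (iv): explicit formula, and the obstacle.} Finally, by the discrete Taylor expansion the map $\Phi\colon \prod_{j=1}^{k-1}U_j^{\,j+1}\to HP$ given by $\Phi((a_{jl})):=\big((\sum_{l=0}^j\binom{c_i}{l}a_{jl})_{j=1}^{k-1}\big)_{i=0}^k$ is a continuous surjective homomorphism of compact groups, hence pushes the product Haar measure to $m_{HP}$; substituting this into \eqref{formula} gives the second, explicit form of the right-hand side. The genuinely hard step is the vanishing of the cocycle-weighted F\o lner averages with cohomologically nontrivial weight in Step (iii): this is precisely where the ergodicity hypothesis of Theorem~\ref{char-thm} and the polynomial structure of the cocycles $\rho_j$ established in \cite{bergelson-tao-ziegler} are used. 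Everything else is F\o lner averaging, Fourier analysis on compact abelian groups, and polynomial bookkeeping of degrees.
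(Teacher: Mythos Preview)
Your Steps (i), (ii), and (iv) are fine and essentially match the paper's setup. The gap is Step (iii), which you yourself flag as ``the genuinely hard step'' but do not actually carry out. The phrase ``one shows\ldots that such a weight has vanishing F{\o}lner average\ldots unless it is cohomologically trivial'' is precisely the content of the lemma, and the inductive framework you propose does not by itself supply the mechanism. In particular, the weighted average you have to control at the inductive step---a product of characters of cocycle values $\prod_i\psi_i(\rho(c_ig,\cdot))$ against a function on $\mathrm{Z}'$---is not obviously simpler than the original problem, and ``ergodicity of $\mathrm{Z}'\times_\rho U$'' is not enough to conclude vanishing without further input.

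The paper's argument is different and avoids the induction on structure groups entirely. After the same reduction to characters $\tilde f_i(u_1,\dots,u_{k-1})=\prod_j e(\phi_{ij}(u_j))$, the key move is \emph{Lagrange interpolation}: since $n\mapsto u_j(T_{ng}x)$ is a polynomial of degree $<j+1$, one has $u_j(T_{c_ig}x)=\sum_{l=0}^j b_{c_i,j,l}\,u_j(T_{c_lg}x)$ with universal coefficients, and the \emph{same} identity holds for any polynomial $P_j$ of degree $<j+1$. This lets one absorb each $\phi_{ij}$ with $i>j$ into $\phi_{0j},\dots,\phi_{jj}$, simultaneously in the average and in the $HP$-integral, so without loss $\phi_{ij}=0$ for $i>j$. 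If all $\phi_{ij}$ vanish, both sides equal $1$. Otherwise pick the largest $j_*$ with some $\phi_{i_*j_*}\neq 0$; after a permutation and another interpolation one may take $i_*=j_*$. Now the left side is an average over only $j_*+1\le k$ shifts, with the last factor equal to $e(\phi_{j_*j_*}(u_{j_*}(\pi_{k-1}(T_{c_{j_*}g}x))))$. Since $\phi_{j_*j_*}\neq 0$, this function has zero conditional expectation on $\mathrm{Z}_{j_*-1}$, hence $U^{j_*}$-seminorm zero by Theorem~\ref{char-uk}, and Lemma~\ref{vdc} (the generalized van der Corput lemma, applied with $j_*$ factors) forces the average to $0$. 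On the right side, Lagrange interpolation shows $(P_{j_*}(c_0),\dots,P_{j_*}(c_{j_*}))$ ranges over all of $U_{j_*}^{j_*+1}$, so the character integral vanishes as well.

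In short: the missing ingredient in your sketch is exactly the pair Lemma~\ref{vdc} $+$ Theorem~\ref{char-uk}, used after a Lagrange-interpolation normalisation of the characters; this replaces your induction on structure groups and your appeal to ``cohomological triviality.'' What this buys is that the vanishing is established directly via the $U^{j_*}$-seminorm on the \emph{full} system, with no need to unwind the cocycle extension one layer at a time.
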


We remark that $HP_{c_0,\ldots,c_k}(\mathrm{Z}_{k-1})$ contains the diagonal group $\{ (x,\ldots,x): x \in Z_{k-1} \}$ and so surjects onto each of the $k+1$ coordinates of $(Z_{k-1})^{k+1}$.  In particular, the right-hand side of \eqref{formula} is well-defined even though each of the $f_i$ are only defined up to $\mu$-almost everywhere equivalence.

As examples of the formula \eqref{formula}, we have (for $p>2$)
$$ \lim_{n \to\infty} \E_{g \in \Phi_n} \int_X f_0 (T_g f_1) (T_{2g} f_2)\ d\mu = \int_{U_1^2} (\pi_1)_* f_0(x) (\pi_1)_* f_1(x+t)
(\pi_1)_* f_2(x+2t)\ dm_{U_1}(x) dm_{U_1}(t)$$
and (for $p > 3$)
\begin{align*}
& \lim_{n \to\infty} \E_{g \in \Phi_n} \int_X f_0 (T_g f_1) (T_{2g} f_2) (T_{3g} f_3)\ d\mu = \int_{U_1^2 \times U_2^3} \\
 &\quad (\pi_2)_* f_0(x_1, x_2) (\pi_2)_* f_1(x_1+t_1, x_2+t_2)
(\pi_2)_* f_2(x_1+2t_1, x_2+2t_2+u_2)\\
&\quad (\pi_2)_* f_3(x_1+3t_1, x_2+3t_2+3u_2) \\
&\quad\ dm_{U_1}(x_1) dm_{U_1}(t_1) dm_{U_2}(x_2) dm_{U_2}(t_2) dm_{U_2}(u_2).
\end{align*}

We also remark that if for $G=\Z$ one considers nilsystems instead of Weyl systems, the analogue of the Hall-Petresco group is the group of Hall-Petresco sequences \cite{hall}, \cite{petresco}, as can be seen from the equidistribution theory in \cite{leibman}. 

By duality, the above limit formula \eqref{formula} also gives a formula for limits (in $L^2(\mathrm{X})$) of the form
\begin{equation}\label{limt}
 \lim_{n \to\infty} \E_{g \in \Phi_n} (T_{c_1 g}  f_1) \ldots (T_{c_kg} f_k)\ d\mu;
\end{equation}
for instance, the limit
$$ \lim_{n \to\infty} \E_{g \in \Phi_n} (T_g f_1) (T_{2g} f_2)\ d\mu $$
in $L^2(\mathrm{X})$ is the function
$$
x \mapsto \int_{U_1} (\pi_1)_* f_1(\pi_1(x)+t) (\pi_1)_* f_2(\pi_1(x)+2t)\ dm_{U_1}(t).$$
The formula in the general case has a similar (but messier) appearance, and is omitted here.  In the above argument, we implicitly used the known result that the limit \eqref{limt} in $L^2(\mathrm{X})$ existed; but in fact the arguments in this paper give an independent proof of this norm convergence result, see Remark \ref{norm} below.

We also have an analogous limit formula for the correlation functions $I_{c_0,\ldots,c_k;f_0,\ldots,f_k}(g)$, which approximates these functions by a certain integral expression $J_{c_0,\ldots,c_k;f_0,\ldots,f_k}(g)$ up to a vanishingly small error in uniform density, in analogy to a similar result \cite[Proposition 6.5]{bhk} for $\Z$-systems.  To state this formula, we need some additional notation.   Let $G = \F_p^\omega$ for a prime $p$, let $1 \leq k < p$, and let $\mathrm{Z}_k$ be the characteristic factor from Theorem \ref{char-thm}, with structure groups $U_1,\ldots,U_k$. For each $1 \leq i \leq k$, let $u_i: Z_{k} \to U_i$ be the coordinate function.  Observe from the definition of a Weyl system that
$$ \Delta_g u_i = \rho_i \circ \pi_{i-1}$$
for all $1 \leq i \leq k$, where $\pi_{i-1}$ is the projection from $Z_{k-1}$ to $Z_{i-1}$.  In particular, as $\rho_i$ is a polynomial of degree $<i$, $u_i$ is a polynomial of degree $<i+1$.  For any $g\in G$, the derivative $\Delta_g^i u_i$ is a constant function, and can thus be identified with an element of $U_i$.  Given $g \in G$, we define the tuple $\theta(g) = (\theta_1(g),\ldots,\theta_k(g)) \in U_1 \times \ldots \times U_k$ by the formula
$$ \theta_i(g) := \Delta_g^i u_i.$$ 
We then define $HP_{c_0,\ldots,c_k}(\mathrm{Z}_{k})_{\theta(g)}$ to be the subset of $HP_{c_0,\ldots,c_k}(\mathrm{Z}_{k})$ consisting of  tuples $(P(c_0),\ldots,P(c_k))$ with $P = (P_1,\ldots,P_{k})$, where each $P_i: \Z \to U_i$ is a polynomial of degree $<i+1$ obeying the additional constraint
$$ \Delta_1^i P_i = \theta_i(g)$$
on the leading coefficient of each of the $P_i$.  
Note that $HP_{c_0,\ldots,c_k}(\mathrm{Z}_{k})_0$ is a closed subgroup of $HP_{c_0,\ldots,c_k}(\mathrm{Z}_{k})$, and $HP_{c_0,\ldots,c_k}(\mathrm{Z}_{k})_\theta$ is a coset of $HP_{c_0,\ldots,c_k}(\mathrm{Z}_{k})_0$ for any $\theta \in U_1 \times \ldots \times U_k$.  In particular, $HP_{c_0,\ldots,c_k}(\mathrm{Z}_{k})_{\theta(g)}$ has a well-defined Haar measure $dm_{HP_{c_0,\ldots,c_k}(\mathrm{Z}_{k})_{\theta(g)}}$.

\begin{lemma}[Second limit formula]\label{equidist-2}  Let $p$ be a prime, let $1 \leq k < p$ be an integer, and let $c_1,\ldots,c_k \in \F_p$ be distinct.  Let $G := \F_p^\omega$, and let $\mathrm{X} = (X,\X,\mu,(T_g)_{g \in G})$ be an ergodic $G$-system.  Let $f_0,\ldots,f_k \in L^\infty(\mathrm{X})$, and let $(\Phi_n)_{n=1}^\infty$ be a F{\o}lner sequence in $G$.   Define the sequence $J_{c_0,\ldots,c_k;f_1,\ldots,f_k}: G \to \C$ by the formula
$$ J_{c_0,\ldots,c_k;f_0,\ldots,f_k}(g) :=
\int_{HP_{c_0,\ldots,c_k}(\mathrm{Z}_{k})_\theta(g)} (\pi_k)_* f_0 \otimes\ldots \otimes (\pi_k)_* f_k \ dm_{HP_{c_0,\ldots,c_k}(\mathrm{Z}_{k})_{\theta(g)}}.$$
Then the difference $I_{c_0,\ldots,c_k;f_0,\ldots,f_k} - J_{c_0,\ldots,c_k;f_0,\ldots,f_k}$ converges to zero in uniform density.
\end{lemma}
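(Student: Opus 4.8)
The plan is to follow the scheme of \cite[Proposition 6.5]{bhk}. First, using Theorem \ref{char-thm}(v) together with the invariance of $I_{c_0,\ldots,c_k;f_0,\ldots,f_k}$ under simultaneous permutation of the indices $0,\ldots,k$, replacing any one of the $f_i$ by its conditional expectation $\E(f_i\mid\mathrm{Z}_k)=(\pi_k)^*(\pi_k)_*f_i$ alters $I_{c_0,\ldots,c_k;f_0,\ldots,f_k}$ only by a bounded sequence converging to zero in uniform density, while leaving $J_{c_0,\ldots,c_k;f_0,\ldots,f_k}$ (which depends on the $f_i$ only through $(\pi_k)_*f_i$) unchanged. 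Telescoping over $i=0,\ldots,k$ we may thus assume $\mathrm{X}=\mathrm{Z}_k=U_1\times_{\rho_2}\cdots\times_{\rho_k}U_k$ is itself an ergodic continuous $k$-step Weyl system, with $\mu$ the product of the Haar measures on $U_1,\ldots,U_k$ and each $f_i$ being $\mathrm{Z}_k$-measurable. Since characters of the structure groups separate points, the Peter--Weyl theorem makes finite linear combinations of the monomials $x\mapsto\prod_{j=1}^k\chi_j(u_j(x))$ ($\chi_j\in\widehat{U_j}$) dense in $L^2(\mathrm{Z}_k)$; as both $I_{c_0,\ldots,c_k;f_0,\ldots,f_k}$ and $J_{c_0,\ldots,c_k;f_0,\ldots,f_k}$ are multilinear in $(f_0,\ldots,f_k)$ and dominated by $\prod_i\|f_i\|_{L^\infty}$ (for $J$ one uses that $HP_{c_0,\ldots,c_k}(\mathrm{Z}_k)_{\theta(g)}$ contains a coset of the diagonal, hence has Haar-full marginals), a routine approximation reduces us to the case where each $f_i(x)=\prod_{j=1}^k\chi_{i,j}(u_j(x))$ is a single monomial.

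Next, a direct computation evaluates both sides on monomials. For fixed $x\in Z_k$ and $g\in G$, the identity $u_j(T_{cg}x)=u_j(x)+\rho_j(cg,\pi_{j-1}(x))$, the cocycle equation (which converts differences in $c$ into iterated operators $\Delta_g$ on $Z_{j-1}$), and the degree bound $\deg\rho_j(g,\cdot)<j$ show that $c\mapsto u_j(T_{cg}x)$ is a polynomial $\Z\to U_j$ of degree $<j+1$; writing its discrete Taylor expansion as $\sum_{l=0}^j\binom{c}{l}a_{j,l}(x,g)$ one checks that $a_{j,0}(x,g)=u_j(x)$, that $a_{j,j}(x,g)=\Delta_g^ju_j=\theta_j(g)$ is independent of $x$, and that for $0<l<j$ the coefficient $a_{j,l}(x,g)$ depends on $x$ only through $\pi_{j-1}(x)$. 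Substituting into $f_i(T_{c_ig}x)$ and collecting by the characters $\xi_{j,l}:=\sum_{i=0}^k\binom{c_i}{l}\chi_{i,j}\in\widehat{U_j}$ gives
$$I_{c_0,\ldots,c_k;f_0,\ldots,f_k}(g)=\Big(\prod_{j=1}^k\xi_{j,j}(\theta_j(g))\Big)\,\Psi(g),\qquad\Psi(g):=\int_{Z_k}\prod_{j=1}^k\prod_{l=0}^{j-1}\xi_{j,l}\bigl(a_{j,l}(x,g)\bigr)\,d\mu(x),$$
while the same substitution in the integral defining $J$ over $HP_{c_0,\ldots,c_k}(\mathrm{Z}_k)_{\theta(g)}$ — on which, after an affine and (since the $c_i$ are distinct and $k<p$) injective change of variables, the $a_{j,l}$ with $l<j$ range independently and Haar-uniformly over $U_j$ while $a_{j,j}$ is fixed at $\theta_j(g)$ — gives $J_{c_0,\ldots,c_k;f_0,\ldots,f_k}(g)=\bigl(\prod_j\xi_{j,j}(\theta_j(g))\bigr)\prod_{j=1}^k\prod_{l=0}^{j-1}\mathbf{1}_{\xi_{j,l}=0}$. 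Hence, if all the $\xi_{j,l}$ with $l<j$ vanish then $\Psi\equiv1$ and $I_{c_0,\ldots,c_k;f_0,\ldots,f_k}=J_{c_0,\ldots,c_k;f_0,\ldots,f_k}$ identically; otherwise $J_{c_0,\ldots,c_k;f_0,\ldots,f_k}\equiv0$ and, since $\bigl|\prod_j\xi_{j,j}(\theta_j(g))\bigr|=1$, it suffices to show that $\Psi$ converges to zero in uniform density.

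To treat $\Psi$ in this second case one argues by equidistribution over $g$; since $|\Psi|=|I|$ is bounded, it is equivalent to show $\lim_n\E_{g\in\Phi_n}|\Psi(g)|^2=0$. Writing the integrand of $\Psi$ as $\prod_{j=1}^k\xi_{j,0}(u_j(x))\cdot\Phi_g(\pi_{k-1}(x))$, where $\Phi_g\colon Z_{k-1}\to\C$ (bounded by $1$) collects the factors with $l\ge1$, and integrating out the coordinates $u_k,u_{k-1},\ldots$ successively — using the polynomial dependence of the $a_{j,l}(\cdot,g)$ on the lower coordinates, as controlled by the structure of the cocycles $\rho_j$ established in \cite{bergelson-tao-ziegler} — one either finds that $\Psi$ vanishes identically, or is reduced to controlling a correlation whose $g$-dependence is carried by a non-trivial character of some $U_{m_0}$ composed with a ``linear part'' homomorphism $g\mapsto\psi^{(m_0)}_g\in\operatorname{Hom}(U_{m_0-1},U_{m_0})$ extracted from $\rho_{m_0}$. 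The proof is then completed by showing that every level set of the resulting homomorphism $g\mapsto\gamma\circ\psi^{(m_0)}_g\in\widehat{U_{m_0-1}}$ has zero uniform density in $G$; this is where the ergodicity of $\mathrm{Z}_k$ enters: a level set of positive uniform density would, upon restriction to the corresponding finite-index subgroup of $G$, make the ``$U_{m_0-1}$-linear part'' of $\rho_{m_0}$ cohomologically trivial, forcing $\rho_{m_0}$ to be cohomologous to a polynomial cocycle of degree $<m_0-1$ and thereby contradicting the description of $\mathrm{Z}_{m_0}$ in Theorem \ref{char-thm}(i)--(ii).

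The main obstacle is precisely this last point: extracting from the ergodicity and the genuine step of $\mathrm{Z}_k$ that the auxiliary homomorphisms $g\mapsto\gamma\circ\psi^{(m_0)}_g$ have infinite image (equivalently, level sets of zero uniform density), together with the bookkeeping needed to pin down the affine/polynomial structure of the intermediate coefficients $a_{j,l}(\cdot,g)$ so that the successive integrations above can actually be performed. Everything else is routine once the Weyl-system structure of $\mathrm{Z}_k$ from Theorem \ref{char-thm} is in hand.
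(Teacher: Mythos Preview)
Your reduction to $\mathrm{X}=\mathrm{Z}_k$ via Theorem~\ref{char-thm}(v), the Fourier decomposition into monomials, and your parametrization via the Taylor coefficients $a_{j,l}(x,g)$ and the characters $\xi_{j,l}:=\sum_i\binom{c_i}{l}\chi_{i,j}$ are all correct and essentially equivalent to what the paper does (Lagrange interpolation and Taylor coefficients being related by a change of basis). You correctly identify the dichotomy: if all $\xi_{j,l}$ with $l<j$ vanish then $I\equiv J$, while otherwise $J\equiv 0$ and one must show $I$ (equivalently $\Psi$) tends to zero in uniform density.

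The gap is in how you handle this last case. Your proposed route --- integrate out the coordinates $u_k,u_{k-1},\ldots$ in turn, and reduce to showing that the level sets of some homomorphism $g\mapsto\gamma\circ\psi^{(m_0)}_g$ have zero uniform density via a cohomological argument --- is substantially harder than necessary and is not made precise. The successive integrations are obstructed by the fact that $a_{j,l}(\cdot,g)$ for $l\ge1$ depends \emph{polynomially} (not affinely) on the lower coordinates through the cocycles $\rho_j$, so one cannot simply ``peel off'' a single character at each stage; and the final claim that a level set of positive uniform density would force $\rho_{m_0}$ to be cohomologous to a cocycle of degree ${<}\,m_0-1$ is asserted without justification.

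The paper avoids all of this by a single extra normal-form step that you are missing. Since the top Taylor coefficient satisfies $a_{j,j}(x,g)=\theta_j(g)$ independently of $x$, the sequence $n\mapsto u_j(T_{ng}x)-\binom{n}{j}\theta_j(g)$ has degree ${<}\,j$; Lagrange interpolation at the nodes $c_0,\ldots,c_{j-1}$ then expresses $u_j(T_{c_ig}x)$ for $i\ge j$ as an integer combination of $u_j(T_{c_0g}x),\ldots,u_j(T_{c_{j-1}g}x)$ plus a constant depending only on $g$. Absorbing these constants rotates $I(g)$ and $J(g)$ by the \emph{same} unimodular phase, and allows one to assume $\chi_{i,j}=0$ whenever $i\ge j$. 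Under this normalization, if $j_*$ is the largest $j$ for which some $\chi_{i,j}$ survives, the factors in $I(g)$ with index $i\ge j_*$ are identically $1$, so $I(g)$ is a correlation of only $j_*$ functions $H_0,\ldots,H_{j_*-1}$ with $H_{j_*-1}$ carrying a nontrivial character in the $U_{j_*}$ coordinate. Theorem~\ref{char-uk} gives $\|H_{j_*-1}\|_{U^{j_*}(\mathrm{X})}=0$, and then Lemma~\ref{vdc-2} (the uniform-density van der Corput lemma) immediately yields $I(g)\to 0$ in uniform density. No equidistribution of auxiliary homomorphisms or cohomological contradiction is needed.
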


Let us write $I(g) \approx_{UD} J(g)$ for the assertion that $I(g)-J(g)$ converges to zero in uniform density.  Then a simple special case of Lemma \ref{equidist-2} is the approximation
$$ \int_X f_0 T_g f_1\ d\mu \approx_{UD} \int_{U_1} (\pi_1)_* f_0( x ) (\pi_1)_* f_1( x + \Delta_g u_1 )\ dm_{U_1}(x);$$
similarly, we have (for $p>2$)
\begin{align*}
\int_X f_0 T_g f_1 T_{2g} f_2\ d\mu &\approx_{UD} \int_{U_1 \times U_2^2} (\pi_2)_* f_0( x_1, x_2 ) (\pi_2)_* f_1( x_1 + \Delta_g u_1, x_2 + t_2 )\\
&\quad (\pi_2)_* f_2( x_1 + 2\Delta_g u_1, x_2 + 2t_2 + \Delta_g^2 u_2 )\ dm_{U_1}(x_1) dm_{U_2}(x_2) dm_{U_2}(t_2)
\end{align*}
and (for $p>3$)
\begin{align*}
\int_X f_0 T_g f_1 T_{2g} f_2 T_{3g} f_3\ d\mu &\approx_{UD} \int_{U_1 \times U_2^2 \times U_3^3} (\pi_3)_* f_0( x_1, x_2, x_3 ) (\pi_3)_* f_1( x_1 + \Delta_g u_1, x_2 + t_2, x_3+t_3 )\\
&\quad (\pi_3)_* f_2( x_1 + 2\Delta_g u_1, x_2 + 2t_2 + \Delta_g^2 u_2, x_3 + 2t_3 + s_3 ) \\
&\quad (\pi_3)_* f_3( x_1 + 3\Delta_g u_1, x_2 + 3t_2 + 3\Delta_g^2 u_2, x_3 + 3t_3 + 3s_3 + \Delta_g^3 u_3 ) \\
&\quad\quad dm_{U_1}(x_1) dm_{U_2}(x_2) dm_{U_2}(t_2) dm_{U_3}(x_3) dm_{U_3}(t_3) dm_{U_3}(s_3).
\end{align*}

We prove this lemma in Section \ref{second-limit}.  The sequence $J_{c_0,\ldots,c_k;f_0,\ldots,f_k}$ can also be viewed as a ``Weyl sequence'' (analogous to the concept of a nilsequence, but with respect to a Weyl system rather than a nilsystem):

\begin{proposition}[Structure theorem]\label{struct}  Let the notation be as in Lemma \ref{equidist-2}.  Then there exists a continuous $k$-step Weyl system $\mathrm{Y} = (Y, \Y, \nu, (S_g)_{g \in G})$, a continuous $F \in L^\infty(\mathrm{Y})$, and a point $y_0 \in Y$ such that 
$$ J_{c_0,\ldots,c_k;f_0,\ldots,f_k}(g) = F( S_g y_0 ) $$
for all $g \in G$.
\end{proposition}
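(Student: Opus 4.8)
The plan is to exhibit $\mathrm{Y}$ as (essentially) a Weyl system built on top of $\mathrm{Z}_k$ whose extra fibre coordinates record the ``free parameters'' $(a_{jl})_{1\le l\le j}$ that appear in the definition of the Hall--Petresco groups, together with a rotation in those coordinates that implements the shift $g\mapsto\theta(g)$ in the leading coefficients. Concretely, recall from the discussion preceding Lemma \ref{equidist-2} that $HP_{c_0,\dots,c_k}(\mathrm{Z}_k)_{\theta(g)}$ is the coset of the fixed closed subgroup $HP_{c_0,\dots,c_k}(\mathrm{Z}_k)_0$ obtained by prescribing $\Delta_1^i P_i=\theta_i(g)$. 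Writing a generic polynomial $P_i:\Z\to U_i$ of degree $<i+1$ in the binomial basis as $P_i(n)=\sum_{l=0}^{i}\binom{n}{l}a_{il}$, the element $(P(c_0),\dots,P(c_k))$ of $HP_{c_0,\dots,c_k}(\mathrm{Z}_k)_{\theta(g)}$ is parametrised by the $a_{il}\in U_i$ subject only to $a_{ii}=\theta_i(g)$ for $1\le i\le k$ (with $a_{i0}\in U_i$ the "base point" coordinate and $a_{il}$ for $0<l<i$ the free intermediate parameters). Thus $J_{c_0,\dots,c_k;f_0,\dots,f_k}(g)$ is an integral of a fixed continuous function over the remaining free parameters, with $g$ entering only through $\theta(g)=(\Delta_g^1 u_1,\dots,\Delta_g^k u_k)$.

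First I would take $\mathrm{Y}$ to be the product $\mathrm{Z}_k\times\prod_{1\le l<i\le k}U_i$, where the $U_i$ are the structure groups of $\mathrm{Z}_k$; the first factor is itself a $k$-step Weyl system by Theorem \ref{char-thm}, and each extra $U_i$-factor is adjoined as a trivial-cocycle (indeed, a constant-cocycle) extension of the appropriate degree $<i$, so that $\mathrm{Y}$ is again a continuous $k$-step Weyl $G$-system. Next I would define the shift $S_g$ on $\mathrm{Y}$ to act as $T_g$ on the $\mathrm{Z}_k$ coordinate and to translate each extra $U_i$-coordinate by the appropriate polynomial cocycle value; the key point, which follows from the identity $\Delta_g u_i=\rho_i\circ\pi_{i-1}$ and the fact that $\Delta_g^i u_i=\theta_i(g)$ is constant, is that iterating $S_g$ causes the leading-coefficient coordinates to move by $\theta(g)$ in exactly the way required so that evaluating the natural coordinate map at $S_g y_0$ reconstructs the coset $HP_{c_0,\dots,c_k}(\mathrm{Z}_k)_{\theta(g)}$ from a fixed base point. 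Then I would let $F\in L^\infty(\mathrm{Y})$ be the continuous function obtained by writing out the integrand in the explicit formula for $J$ (a product of translates of the $(\pi_k)_*f_i$, which are continuous on the compact space $\mathrm{Z}_k$ since $\mathrm{Z}_k$ is a continuous Weyl system) and integrating over those $U_i$-coordinates of $\mathrm{Y}$ that correspond to genuinely free Hall--Petresco parameters, leaving the base-point and leading-coefficient coordinates as the arguments of $F$; finally I would pick $y_0$ to be the point with $\mathrm{Z}_k$-coordinate an arbitrary (e.g. identity-type) base point and all extra coordinates zero, and check that $F(S_g y_0)$ equals $J_{c_0,\dots,c_k;f_0,\dots,f_k}(g)$ by matching parametrisations.

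The bookkeeping obstacle — and the step I expect to be the real work — is organising the correspondence between (a) the binomial-coefficient parametrisation of polynomials underlying the Hall--Petresco group, (b) the successive-extension structure of a Weyl system, and (c) the cocycle identities $\Delta_g u_i=\rho_i\circ\pi_{i-1}$, so that the orbit $g\mapsto S_g y_0$ genuinely traces out the family of cosets $\theta(g)$ rather than some skew or reparametrised version of it; here it matters that $k<p$, so that the binomial coefficients $\binom{c_i}{l}$ are invertible mod $p$ and the various change-of-basis maps between monomial and binomial presentations of degree-$<i+1$ polynomials are well defined over $U_i$. A secondary point to verify is that $F$ is genuinely continuous, which reduces to continuity of the $(\pi_k)_*f_i$ together with continuity of the group operations on the compact groups $U_i$ and of integration against Haar measure over the fibre coordinates; and that the construction does not depend (up to the stated conclusion) on the choice of base point $y_0$, which follows because $HP_{c_0,\dots,c_k}(\mathrm{Z}_k)_0$ contains the diagonal of $Z_k$ and hence the integral defining $J$ is insensitive to translating the base point.
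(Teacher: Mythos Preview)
Your plan has a genuine gap, and it also diverges significantly from the paper's argument.

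The central difficulty is that your system $\mathrm{Y}=\mathrm{Z}_k\times\prod_{1\le l<i\le k}U_i$ has no coordinates that record the leading coefficients $\theta_i(g)=\Delta_g^i u_i$. You assign the extra $U_i$-factors to the \emph{free} intermediate Taylor parameters $a_{il}$ with $0<l<i$ (those integrated out in $J$), and then say $F$ should depend on ``the base-point and leading-coefficient coordinates'' --- but the leading coefficients are not coordinates of your $\mathrm{Y}$. The only remaining hope is that $\theta(g)$ can be read continuously off the $\mathrm{Z}_k$-orbit point $T_g z_0$. For $i=1$ this works: $\theta_1(g)=u_1(T_g z_0)-u_1(z_0)$. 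For $i\ge 2$ it does not in general: for instance $\theta_2(g)=u_2(T_{2g}z_0)-2u_2(T_g z_0)+u_2(z_0)$, which requires knowing $T_{2g}z_0$, not just $T_g z_0$. Equivalently, writing $\psi(g):=u_2(T_g z_0)-u_2(z_0)$ one has $\psi(g+h)=\psi(g)+\psi(h)+\Lambda_2(g,h)$ with $\Lambda_2$ the symmetric bilinear form $\Delta_g\Delta_h u_2$; so $\theta_2(g)=\Lambda_2(g,g)$ is the diagonal of a bilinear form that a single orbit value $\psi(g)$ does not determine. Calling the extra extensions ``constant-cocycle'' only makes this worse: a cocycle independent of the base is a homomorphism $G\to U_i$, hence of degree $<1$, and cannot generate the degree-$i$ behaviour of $\theta_i$.

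A secondary issue: the $(\pi_k)_*f_i$ are merely $L^\infty$ conditional expectations and need not be continuous on $\mathrm{Z}_k$; the continuity of $\theta\mapsto J$ is obtained (as the paper does) by $L^2$-approximating each $(\pi_k)_*f_i$ by a continuous function, not by asserting continuity of the integrand.

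The paper's route avoids all of this by first observing that $J(g)$ depends on $g$ only through $\theta(g)$, continuously, and then reducing to representing each map $g\mapsto\theta_j(g)=\Lambda_j(g,\ldots,g)$ (where $\Lambda_j(g_1,\ldots,g_j):=\Delta_{g_1}\cdots\Delta_{g_j}u_j$ is symmetric $j$-linear) as a Weyl sequence. For this it builds, for each $j$, a $j$-step Weyl system $\mathrm{Y}_j=V_1\times\cdots\times V_j$, where $V_i$ is the compact group of symmetric $(j-i)$-linear maps $G^{j-i}\to U_j$, with shift
\[
S_{j,g}(\Gamma_1,\ldots,\Gamma_j)_i(h_1,\ldots,h_{j-i})=\sum_{l=0}^{i}\binom{i}{l}\,\Gamma_l(h_1,\ldots,h_{j-i},g^{(i-l)}),
\]
taking $\Gamma_0:=\Lambda_j$. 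Then $v_j(S_{j,g}(0,\ldots,0))=\Lambda_j(g,\ldots,g)=\theta_j(g)$. The point is that the Weyl system is built from spaces of multilinear forms, precisely so that the required degree-$j$ polynomial $g\mapsto\theta_j(g)$ arises as a \emph{coordinate} of the orbit rather than something to be extracted from a single $\mathrm{Z}_k$-point.
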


We establish this proposition in Section \ref{struct-sec}.  Combining this proposition with Lemma \ref{equidist-2}, we see that $I_{c_0,\ldots,c_k;f_0,\ldots,f_k}$ is approximated by a $k$-step Weyl sequence up to an error that goes to zero in uniform density (cf. \cite[Theorem 1.9]{bhk}).

In analogy to \cite{bhk}, we can use the limit formulae to obtain Khintchine type recurrence theorems.  It will be convenient to make the following definition.

\begin{definition}[Khintchine property]\label{khin-def}  Let $p$ be a prime, and let $c_0,\ldots,c_k$ be distinct elements of $\F_p$.  We say that the tuple $(c_0,\ldots,c_k)$ has the \emph{Khintchine property} (in characteristic $p$) if, whenever $G = \F_p^\omega$, $\mathrm{X} = (X,\X,\mu,(T_g)_{g \in G})$ is an ergodic $G$-system, $A \in \X$, and $\eps>0$, the set
$$ \{ g \in G: \mu( T_{-c_0 g} A \cap \ldots \cap T_{-c_k g} A ) \geq \mu(A)^{k+1} - \eps \}$$
is a syndetic subset of $G$ (i.e. $G$ can be covered by finitely many translates of this set).
\end{definition}

Of course, the negative signs in the subscripts here can be easily deleted if desired.  
It is trivial that any singleton tuple $(c_0)$ has the Khintchine property, and the classical Khintchine recurrence theorem adopted to general Abelian groups $G$ implies that any pair $(c_0,c_1)$ has the Khintchine property (and in this case we do not need to assume the ergodicity of our $G$-system).  It is also clear that the Khintchine property is preserved if one applies an invertible affine tranformation $x \mapsto ax+b$ to each element $c_i$ of a tuple $(c_0,\ldots,c_k)$ (i.e. $(c_0,\ldots,c_k)$ has the Khintchine property iff $(ac_0+b,\ldots,ac_k+b)$ has the Khintchine property).  For longer tuples, we have the following positive results (which are finite characteristic analogues of results in \cite{bhk}):

\begin{theorem}[Khintchine for double recurrence]\label{double}  If $p > 2$ and $c_0,c_1,c_2$ are distinct elements of $\F_p$, then $(c_0,c_1,c_2)$ has the Khintchine property.
\end{theorem}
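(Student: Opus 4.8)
The plan is to combine Theorem~\ref{char-thm}(v) with a further reduction of the correlation sequence for a triple to the \emph{Kronecker} factor, a convexity inequality, and a soft ``every F{\o}lner block meets the good set'' argument that upgrades a uniform-density statement to syndeticity. First I would reduce: since $\mu(T_{-c_0g}A\cap T_{-c_1g}A\cap T_{-c_2g}A)=I_{-c_0,-c_1,-c_2;1_A,1_A,1_A}(g)$ and the Khintchine property is unaffected by passing from $(c_0,c_1,c_2)$ to $(-c_0,-c_1,-c_2)$, it suffices to prove that for every ergodic $\F_p^\omega$-system $\mathrm{X}$, every $A\in\X$ and every $\eps>0$, the set $\{g\in G:I_{c_0,c_1,c_2;1_A,1_A,1_A}(g)\geq\delta^{3}-\eps\}$ is syndetic, where $\delta:=\mu(A)$; I would work with the F{\o}lner sequence $\Phi_n:=\F_p^n$.

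The key structural claim I would establish is that for $p>2$ one has, with $\bar f:=(\pi_1)_*1_A=\E(1_A\mid\mathrm{Z}_1)$ and $\beta(g):=\Delta_gu_1\in U_1$ (a homomorphism of $G$ onto a dense subgroup of $U_1$, by ergodicity of $\mathrm{Z}_1$),
$$ I_{c_0,c_1,c_2;1_A,1_A,1_A}(g)=\Theta(\beta(g))+\nu(g),\qquad \Theta(w):=\int_{U_1}\bar f(x+c_0w)\,\bar f(x+c_1w)\,\bar f(x+c_2w)\,dm_{U_1}(x),$$
where $\nu$ converges to $0$ in uniform density. To prove this I would start from Theorem~\ref{char-thm}(v), which already gives $I=\tilde I+(\text{u.d.\ null})$ with $\tilde I(g)=\int_{Z_2}(S_{c_0g}\tilde f)(S_{c_1g}\tilde f)(S_{c_2g}\tilde f)\,d\mu_2$ and $\tilde f=(\pi_2)_*1_A$; then I would insert the description of a degree-$<2$ cocycle on a $1$-step Weyl system obtained from the cocycle equation, namely $\rho_2(cg,x_1)=c\,\phi_g(x_1)+\binom c2\phi_g(\beta(g))+c\,d(g)$ for a continuous homomorphism $\phi_g\colon U_1\to U_2$, and expand $\tilde I$ in a Fourier series in the $U_2$-coordinate of $\tilde f$. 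The term coming from the trivial character triple is exactly $\Theta(\beta(g))$; for every other admissible triple $(\chi_0,\chi_1,\chi_2)$ (those with $\chi_0\chi_1\chi_2=1$) the corresponding term is, up to a bounded factor, an integral over $U_1$ of a bounded function against a polynomial phase in $g$ of degree $\le2$, whose non-degeneracy is governed by the Vandermonde determinant $\det\big(c_i^{\,m}\big)_{0\le i,m\le 2}=\prod_{i<j}(c_j-c_i)\neq0$. This is where $p>2$ is essential: the invertibility of $2$ is needed to rewrite the $\binom{c_i}{2}$ row, and a Gauss-sum estimate over $\F_p$ (valid for odd $p$) gives $\sup_h\mathbf E_{g\in h+\Phi_n}(\text{genuinely quadratic phase})\to0$; the tail over $(\chi_0,\chi_1,\chi_2)$ is absorbed using $\tilde f\in L^2$. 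This reduction to the Kronecker factor is the principal difficulty of the argument, and the only place the hypothesis $p>2$ is used.

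Granting the reduction, the conclusion is quick. By Jensen's inequality applied to $\bar f\ge0$ with $\int\bar f\,dm_{U_1}=\delta$ one has $\Theta(0)=\int_{U_1}\bar f^{\,3}\,dm_{U_1}\ge\delta^{3}$, and $\Theta$ is continuous on $U_1$ (translation is continuous on $L^2(U_1)$), so $V:=\{w\in U_1:\Theta(w)>\delta^{3}-\eps/2\}$ is an open neighbourhood of $0$. Picking a continuous $\kappa$ with $0\le\kappa\le\mathbf 1_V$ and $\int\kappa\,dm_{U_1}>0$, expanding it in characters of $U_1$, and using that $\chi\circ\beta$ is a non-trivial character of $G$ for each non-trivial $\chi\in\widehat{U_1}$, one gets $\inf_h\mathbf E_{g\in h+\Phi_n}\mathbf 1_{\beta^{-1}(V)}(g)\ge\inf_h\mathbf E_{g\in h+\Phi_n}\kappa(\beta(g))\to\int\kappa\,dm_{U_1}>0$. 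Fix $\rho>0$ and then $n$ so large that this infimum exceeds $\rho$ and simultaneously $\sup_h\mathbf E_{g\in h+\Phi_n}|\nu(g)|<\eps\rho/4$; then for every $h$ the block $h+\Phi_n$ must contain some $g$ with both $\beta(g)\in V$ and $|\nu(g)|<\eps/2$, whence $I_{c_0,c_1,c_2;1_A,1_A,1_A}(g)=\Theta(\beta(g))+\nu(g)>\delta^{3}-\eps$. Therefore $\{g:I_{c_0,c_1,c_2;1_A,1_A,1_A}(g)\ge\delta^{3}-\eps\}$ meets every translate of $\Phi_n$, so $G$ is covered by $|\Phi_n|$ of its translates and it is syndetic. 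The only remaining point requiring care is running the two uniform estimates (uniform equidistribution of $\beta$ along translates of $\Phi_n$, and uniform-density decay of $\nu$) for a common value of $n$, which is immediate.
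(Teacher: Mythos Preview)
Your central structural claim is false: in general one does \emph{not} have
\[
I_{c_0,c_1,c_2;1_A,1_A,1_A}(g)=\Theta(\beta(g))+\nu(g)
\]
with $\nu\to 0$ in uniform density. By Lemma~\ref{equidist-2} the correct uniform-density approximant is $J(g)$, an integral over $HP_{c_0,c_1,c_2}(\mathrm{Z}_2)_{\theta(g)}$, and this quantity genuinely depends on the degree-two data $\theta_2(g)=\Delta_g^2 u_2$, not only on $\beta(g)$. Concretely, after your Fourier expansion the nontrivial $(\chi_0,\chi_1,\chi_2)$ terms of $J(g)$ take the form $e\bigl(\psi_\chi(\theta_2(g))\bigr)\,K_\chi(\beta(g))$ with $\psi_\chi=\sum_i\binom{c_i}{2}\chi_i$ a nontrivial character of $U_2$ (your Vandermonde observation shows $\psi_\chi\neq 0$), and such a sequence is a genuine $2$-step Weyl sequence; it need not converge to $0$ in uniform density. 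This mirrors the $\Z$-case, where the triple correlation is a $2$-step nilsequence, not a $1$-step one.

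The specific error in your sketch is a conflation of two different limits. A Gauss-sum bound on a quadratic phase only yields, for each fixed $x_1$, that $\sup_h\bigl|\E_{g\in h+\Phi_n}(\text{phase})\bigr|\to 0$, and hence (after the $x_1$-integral) $\sup_h\bigl|\E_{g\in h+\Phi_n}\nu(g)\bigr|\to 0$. But your pigeonhole step in the final paragraph requires the stronger $\sup_h\E_{g\in h+\Phi_n}|\nu(g)|\to 0$, i.e.\ convergence of $\nu$ to $0$ in uniform density. That stronger statement is not available, and is in fact false by the remark above.

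The paper's proof sidesteps this entirely: rather than approximating $I(g)$ pointwise, it works with \emph{weighted} averages $\E_{g\in\Phi_n}\eta(\rho_1(g))\,I(g)$, where $\eta$ is a bump on $U_1$ concentrated near $0$. Because one is averaging in $g$, Theorem~\ref{char-thm}(iv) lets one descend all the way to the Kronecker factor $\mathrm{Z}_1$ (not $\mathrm{Z}_2$), giving the exact weighted limit formula~\eqref{modlim}. The inequality $\int\bar f^{\,3}\ge(\int\bar f)^3$ and continuity of translation then give the contradiction, with no need for any uniform-density estimate on residual terms.
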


\begin{theorem}[Khintchine for triple recurrence]\label{triple}  If $p > 3$ and $c_0,c_1,c_2,c_3$ are distinct elements of $\F_p$ which form a parallelogram in the sense that $c_i+c_j=c_k+c_l$ for some permutation $\{i,j,k,l\}$ of $\{1,2,3,4\}$, then $(c_0,c_1,c_2,c_3)$ has the Khintchine property.
\end{theorem}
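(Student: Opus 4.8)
\emph{Proof strategy.} The plan is to follow the strategy of \cite{bhk}, exploiting two special features of a parallelogram configuration. By the ergodic decomposition we may assume $\mathrm{X}$ is ergodic; and since the four-fold intersection in Definition \ref{khin-def} is symmetric in the $c_i$ and the Khintchine property is invariant under invertible affine substitutions $c_i\mapsto ac_i+b$, we may relabel so that the parallelogram relation reads $c_0+c_3=c_1+c_2$, set $v:=c_1-c_0$, $w:=c_2-c_0$, and translate so that $(c_0,c_1,c_2,c_3)=(0,v,w,v+w)$; distinctness then forces $v,w\in\F_p\setminus\{0\}$ with $v\neq\pm w$, and $p>3$ is precisely what is needed for such a configuration to exist --- equivalently for $k=3$ to satisfy $k<p$, so that Theorem \ref{char-thm} and Lemmas \ref{equidist}, \ref{equidist-2} apply. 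The second feature is that, because $T_{wg}\circ T_{vg}=T_{(v+w)g}$, the four-fold correlation collapses to a two-fold one: writing $\phi_g:=1_A\cdot(1_A\circ T_{vg})$ one has
$$ \mu\big(A\cap T_{-vg}A\cap T_{-wg}A\cap T_{-(v+w)g}A\big)=\int_X\phi_g\cdot(\phi_g\circ T_{wg})\ d\mu=\langle\phi_g,T_{wg}\phi_g\rangle,$$
the double correlation of the derived function $\phi_g$ along the shift $wg$.

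Double correlations are controlled by the Kronecker factor $\mathrm{Z}_1$, which we realise as a rotation $x\mapsto x+\alpha_g$ on a compact abelian group $U_1$ with $g\mapsto\alpha_g$ a homomorphism of dense image; put $f:=(\pi_1)_*1_A$, so $0\le f\le1$ and $\int_{U_1}f\,dm_{U_1}=\mu(A)$. Because conditional expectation onto the invariant factor $\mathrm{Z}_1$ commutes with the $T_g$, annihilates the orthogonal complement of $\mathrm{Z}_1$, and (writing $\rho:=1_A-f$) the term $\E(\rho\cdot T_{vg}\rho\mid\mathrm{Z}_1)$ has negligible size on $g$-average --- since $\rho$ is orthogonal to $\mathrm{Z}_1$, the factor governing such two-fold averages --- one expects $\langle\phi_g,T_{wg}\phi_g\rangle\approx_{UD}\langle f\cdot T_{vg}f,\ T_{wg}(f\cdot T_{vg}f)\rangle=M_{\alpha_g}$, where for $u\in U_1$
$$ M_u:=\int_{U_1}f(x)\,f(x+vu)\,f(x+wu)\,f\big(x+(v+w)u\big)\ dm_{U_1}(x).$$
The function $M:U_1\to\R$ is continuous, and Jensen's inequality (convexity of $t\mapsto t^4$ on $[0,\infty)$) gives the crucial lower bound $M_0=\int_{U_1}f^4\,dm_{U_1}\ge\big(\int_{U_1}f\,dm_{U_1}\big)^4=\mu(A)^4$. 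Hence, given $\eps>0$, the set $V:=\{u\in U_1:M_u>\mu(A)^4-\tfrac{\eps}{2}\}$ is an open neighbourhood of $0$ in $U_1$; as the group rotation $(U_1,(+\alpha_g)_{g\in G})$ is minimal, its return-time set $S:=\{g:\alpha_g\in V\}$ is syndetic, and $M_{\alpha_g}>\mu(A)^4-\tfrac{\eps}{2}$ on $S$. Writing $I(g)$ for the correlation in Definition \ref{khin-def} and using $I(g)\approx_{UD}M_{\alpha_g}$, the set $B:=\{g:|I(g)-M_{\alpha_g}|>\tfrac{\eps}{2}\}$ has upper Banach density zero; since the intersection of a syndetic set with the complement of a Banach-density-zero set is again syndetic, $\{g:I(g)>\mu(A)^4-\eps\}\supseteq S\setminus B$ is syndetic, which is the claim.

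The main obstacle is the uniform-density control asserted in the second paragraph: the reduction of $\langle\phi_g,T_{wg}\phi_g\rangle$ to the Kronecker factor must be carried out uniformly in $g$, even though both $\phi_g$ and the shift $wg$ vary with $g$ and are slaved to a single parameter --- a two-variable Wiener--Wintner type phenomenon, and the $\F_p^\omega$-analogue of the corresponding step in \cite{bhk}. A clean way to package it is to invoke Lemma \ref{equidist-2} and Proposition \ref{struct} directly with $k=3$: these give $I(g)\approx_{UD}J(g)$ with $J(g)=F(S_gy_0)$ a continuous Weyl sequence on an ergodic continuous $3$-step Weyl system $\mathrm{Y}$, which --- being a finite tower of continuous isometric extensions of a point --- is uniquely ergodic and minimal. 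One then shows, again using the parallelogram structure, that $J(g)=M_{\alpha_g}$: the difference is a Weyl sequence tending to zero in uniform density, hence vanishes identically by unique ergodicity; and $\sup_gJ(g)=\sup_uM_u\ge\mu(A)^4$ by the Jensen bound, so one concludes by minimality exactly as above. The real content is thus the assertion that, for a parallelogram configuration, the correlation sequence is controlled in uniform density already by $\mathrm{Z}_1$, not merely by $\mathrm{Z}_3$.
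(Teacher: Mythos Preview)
Your strategy has a genuine gap at its central step. The assertion that the parallelogram correlation sequence is controlled in uniform density by the Kronecker factor --- that is, $I(g)\approx_{UD}M_{\alpha_g}$, or equivalently $J(g)=M_{\alpha_g}$ --- is false. From Lemma~\ref{equidist-2} the function $J(g)$ is an integral over $HP_{c_0,\ldots,c_3}(\mathrm{Z}_3)_{\theta(g)}$ and in general depends on $\theta_2(g)=\Delta_g^2 u_2$ and $\theta_3(g)=\Delta_g^3 u_3$, not only on $\alpha_g=\theta_1(g)$. Concretely, take $\mathrm{X}$ to be a $2$-step Weyl system $U_1\times_\rho U_2$ with $\mathrm{Z}_1=U_1$, choose a nontrivial character $\phi$ on $U_2$, and let $\tilde f(x_1,x_2)=1+\eps\cos(2\pi\phi(x_2))$ stand in for $(\pi_2)_*1_A$. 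Then $(\pi_1)_*\tilde f\equiv 1$, so $M_{\alpha_g}\equiv 1$; but expanding $J(g)$ and integrating over the free parameters $x_2,t_2$, the order-$\eps^4$ contribution survives for the sign pattern $(+,-,-,+)$ (exactly the parallelogram relation $0-v-w+(v+w)=0$) and equals a nonzero multiple of $\cos\!\big(2\pi\phi(c\cdot\theta_2(g))\big)$ for some $c\in\F_p^\times$. Thus $J(g)$ oscillates with $\theta_2(g)$ and is not equal to $M_{\alpha_g}$. Your circular justification (``the difference is a Weyl sequence tending to zero in UD, hence vanishes'') presupposes the very approximation you are trying to prove.

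The parallelogram structure does \emph{not} make $U_2$ irrelevant; what it does is make the $U_2$-integral a perfect square. The paper's argument works with $\mathrm{Z}_2$ (not $\mathrm{Z}_1$), using the averaged limit formula Lemma~\ref{equidist}: one inserts a continuous weight $\eta(\rho_1(g))$ concentrated near $0\in U_1$, passes to the limit to reduce to an integral over $U_1^2\times U_2^3$, sends $t_1\to 0$ by continuity, and is left with the $U_2$-inequality \eqref{temple}. Here the parallelogram relation $c_0+c_3=c_1+c_2$ is exactly what allows the change of variables that rewrites the left side of \eqref{temple} as $\int_{U_2}\big(\int_{U_2}F(\cdot)F(\cdot)\big)^2$, after which Cauchy--Schwarz gives the bound $(\int F)^4$. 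So the role of the parallelogram is a positivity (Gowers-inner-product) identity on the $U_2$ level, not a reduction to $\mathrm{Z}_1$. Your Kronecker-only endgame (Jensen at $u=0$, minimality) is the correct move for the $U_1$ variable, but it must be combined with --- not substituted for --- the $U_2$ argument.
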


For comparison, the classical Khintchine recurrence theorem, that is, a version of it for $G=\F_p^\omega$,  implies that for any distinct $c_0,c_1  \in \F_p$ and any $G$-system $\mathrm{X} = (X,\X,\mu,(T_g)_{g \in G})$, any $A \in \X$ and any $\eps> 0$, the set
$$ \{ g \in G: \mu( T_{-c_0 g} A \cap T_{-c_1g} A ) \geq \mu(A)^2 - \eps \}$$
is syndetic.  In this classical setting of single recurrence, no ergodicity hypothesis is required, but by adapting the construction in \cite[Theorem 2.1]{bhk}, one can show that ergodicity is needed for double or higher recurrence if $p$ is sufficiently large (this hypothesis is needed to embed a version of the Behrend-type constructions used in \cite{bhk}).

We prove these results in Section \ref{double-sec} and Section \ref{triple-sec} respectively. We remark that a finitary analogue of Theorem \ref{triple}, concerning dense subsets of finite-dimensional vector spaces $\F_p^n$ instead of subsets of $\F_p^\omega$-systems (and with the shifts $g$ lying in a dense subset of $\F_p^n$, rather than a syndetic subset of $\F_p^\omega$), was established in \cite[Theorem 4.1]{green}.

We conjecture that the above results exhaust all the possible tuples with the Khintchine property:

\begin{conjecture}  Let $p$ be a prime, let $k < p$, and let $c_0,\ldots,c_k$ be distinct elements of $\F_p$.
\begin{itemize}
\item[(i)]  If $k>3$, then $(c_0,\ldots,c_k)$ does not have the Khintchine property.
\item[(ii)]  If $k=3$, and $(c_0,c_1,c_2,c_3)$ does not form a parallelogram, then $(c_0,c_1,c_2,c_3)$ does not have the Khintchine property.
\end{itemize}
\end{conjecture}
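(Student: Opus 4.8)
The plan is to establish both negative assertions by constructing, for each offending tuple $(c_0,\ldots,c_k)$, an ergodic $\F_p^\omega$-system $\mathrm{X}$, a set $A$, and an $\eps>0$ for which the ``good'' set $S=\{g:\mu(T_{-c_0g}A\cap\ldots\cap T_{-c_kg}A)\geq\mu(A)^{k+1}-\eps\}$ fails to be syndetic, i.e.\ its complement is thick in $\F_p^\omega$ (contains a translate of every finite-dimensional subspace). By Theorem~\ref{char-thm} and Lemma~\ref{equidist-2}, the correlation $\mu(T_{-c_0g}A\cap\ldots\cap T_{-c_kg}A)$ coincides, up to an error tending to zero in uniform density, with the quantity $J_{-c_0,\ldots,-c_k;1_A,\ldots,1_A}(g)$ of Lemma~\ref{equidist-2}, which depends on $g$ only through the leading-coefficient vector $\theta(g)\in U_1\times\ldots\times U_k$ and equals an average of $((\pi_k)_*1_A)^{\otimes(k+1)}$ over a $\theta(g)$-dependent coset of a Hall--Petresco subgroup of $\mathrm{Z}_k^{k+1}$. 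Writing this as $\widetilde J(\theta(g))$ for a continuous $\widetilde J$ on the structure torus $U_1\times\ldots\times U_k$, the problem reduces to: build a continuous Weyl $\F_p^\omega$-system and a set $A$ for which $\widetilde J<\mu(A)^{k+1}-\eps$ on a region $E$ whose preimage $\theta^{-1}(E)$ is thick; since the relevant filtration is short, a $2$-step Weyl system $\prod\F_p\times_{\rho_2}U_2$ should already suffice for both parts.

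For part~(i), with $k\geq 4$, I would exploit the fact that the $(k+1)$-point configuration is ``quadratically sensitive'': unlike the $k=2$ and $k=3$-parallelogram cases, there is no Cauchy--Schwarz/Gowers-box positivity forcing the Hall--Petresco average to stay at or above $\mu(A)^{k+1}$ for most $\theta$. Take $A$ to be the pullback to $\mathrm{Z}_k$ of a finite-characteristic Behrend/Ruzsa-type set inside the structure group $U_2$ --- concretely a level set $\{v:Q(v)\in I\}$ of a well-chosen quadratic form, the $\F_p$-analogue of Behrend's sphere construction, which becomes available once $p$ is large. For such $A$ one shows $\widetilde J(\theta)<\mu(A)^{k+1}-\eps$ for all $\theta$ outside a small union of neighbourhoods of proper algebraic subvarieties of the torus. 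One then verifies thickness of $\theta^{-1}(E)$: $\theta_1$ is a homomorphism and $\theta_2$ a polynomial of degree $\leq 2$, so for a finite subspace $V\leq\F_p^\omega$ the image $\theta(h+V)$ lies in a translate $\theta(h)+W$ of a finite subgroup $W$ of the torus whose size grows with $\dim V$; arranging the cocycle $\rho_2$ and the filtration of $U_2$ compatibly, a generic choice of $h$ places $\theta(h)+W$ entirely inside $E$, yielding the required translate of $V$ in the bad set for every $\dim V$.

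For part~(ii), with $k=3$ and $(c_0,c_1,c_2,c_3)$ not a parallelogram, the argument turns on one linear-algebraic point. Since the Khintchine property is invariant under an affine change of the $c_i$, one checks that the parallelogram condition $c_i+c_j=c_k+c_l$ is exactly the condition under which the Vandermonde relations among $1,c_i,\binom{c_i}{2}$ force the $U_2$-component of $HP_{c_0,\ldots,c_3}(U_1,U_2)$ into a proper ``box'' subgroup --- and that box structure is precisely what turns $\widetilde J$ into a Gowers-$U^2$-type box average, hence $\geq\mu(A)^4$ by Cauchy--Schwarz, which is the mechanism behind Theorem~\ref{triple}. When the tuple is not a parallelogram this $U_2$-component fills a full subgroup, $\widetilde J$ genuinely sees quadratic data, and the same Behrend-type quadratic level set in $U_2$ drives $\widetilde J$ below $\mu(A)^4-\eps$ on a thick set exactly as in part~(i).

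The main obstacle --- and presumably the reason the full statement is stated as a conjecture rather than a theorem --- is twofold. First one needs the finite-characteristic Behrend estimate itself: a quantitative upper bound on the count of the relevant configuration against the quadratic level set, a piece of quadratic Fourier analysis on $\F_p$-vector spaces that requires $p$ large and that is delicate in the $k=3$ case, where one must be sure the residual structure surviving when the tuple just barely fails the parallelogram condition is still quadratically visible. Second, and more structural, is the thickness verification: because the correlation factors through the compact, \emph{isometric} factor $\mathrm{Z}_{k-1}$, the bad set is essentially a pullback $\theta^{-1}(E)$, and an open $E$ only yields a \emph{syndetic} (not thick) pullback --- so $E$ must be engineered to be invariant under a chain of finite subgroups of $U_2$, with $\theta$ mapping arbitrarily large finite subspaces into single such subgroups, and the uniform-density error must be controlled so that thickness survives passage back from $J$ to the true correlation. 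Getting the Behrend estimate and this thickness bookkeeping to coexist, and determining exactly which tuples the argument actually reaches, is where the real work lies.
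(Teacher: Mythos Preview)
This statement is a \emph{conjecture} in the paper, not a theorem: the authors explicitly write that they ``were not able [to] establish the above conjecture in general.'' There is therefore no proof in the paper to compare against. What the paper does prove is the generic case (Theorem~\ref{counter}), by a rather different technique from yours: a perturbative Fourier ansatz $f=1+\sum_{(a_1,a_2)\in A}\eps_{a_1,a_2}e_p(a_1x_1+a_2x_2)$ on a concrete $2$-step ``skew-shift'' Weyl system over $\F_p((1/t))/\F_p[t]$, together with Theorem~\ref{counter-2} (a Bernoulli extension to pass from functions to sets) and Theorem~\ref{loo} (reducing to an integral inequality on $HP_{c_0,\ldots,c_k}(\T^m)_\theta$). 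The counterexample frequencies $A$ are chosen via Lagrange interpolation so that the surviving terms in the Fourier expansion are negative; this works for all but $O_k(p^k)$ tuples, but the exceptional tuples survive precisely because the Vandermonde-type relations \eqref{con1}--\eqref{con3} degenerate, and the paper leaves those to ``a rather large amount of combinatorial and algebraic case checking.''

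Your proposal is an honest outline rather than a proof, and your own diagnosis of the obstacles is essentially the paper's. The Behrend/Ruzsa quadratic-level-set idea you suggest is exactly what the paper flags as the missing ingredient: ``this argument relies on the Behrend construction \ldots and it remains an interesting open problem to usefully adapt this construction to the finite field setting when the characteristic $p$ is fixed.'' So your plan, as stated, cannot reach the full conjecture for bounded $p$; at best it recovers the large-$p$ cases the paper already alludes to in the footnote. Two smaller technical points: (a) the pointwise correlation $I(g)$ is governed by $\mathrm{Z}_k$, not $\mathrm{Z}_{k-1}$ (this is the content of Theorem~\ref{char-thm}(v) and Lemma~\ref{equidist-2}), so your parenthetical ``factors through the compact, isometric factor $\mathrm{Z}_{k-1}$'' is off by one step; (b) your linear-algebraic claim that in the non-parallelogram case ``the $U_2$-component fills a full subgroup'' is not quite right --- the $U_2$-component of $HP_{c_0,\ldots,c_3}(U_1,U_2)$ is always the image of degree-$\leq 2$ polynomials at four points and hence always codimension one in $U_2^4$; what distinguishes the parallelogram case is the additional \emph{product} structure (the identity used in Section~\ref{triple-sec}) that enables Cauchy--Schwarz, not the dimension.
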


In \cite[Corollary 1.6]{bhk}, it was shown that the tuple $(0,1,2,3,4)$ did not obey the analogous Khintchine property for $\Z$-systems, and it is not difficult to modify the construction there to also show that $(0,1,2,3,4)$ does not obey the Khintchine property in characteristic $p$ if $p$ is sufficiently large.  Similarly if $(0,1,2,3,4)$ is replaced by $(0,1,\ldots,k)$ for any fixed $k \geq 4$, if $p$ is sufficiently large depending on $k$.

While we were not able\footnote{By some extremely lengthy computations involving a subdivision into a large number of subcases, and \emph{ad hoc} constructions of counterexamples in each case, we have been able to verify this conjecture in the case when $k=3$, $c_0,c_1,c_2,c_3$ are fixed integers that do not form a parallelogram, and $p$ is sufficiently large depending on $c_0,c_1,c_2,c_3$ (or alternatively, if one considers $\Z$-systems rather than $\F_p^\omega$-systems).  We plan to make details of these constructions available elsewhere.} establish the above conjecture in general, we can do so for ``generic'' tuples $(c_0,\ldots,c_k)$:

\begin{theorem}[Khintchine property generically fails]\label{counter}  Let $k \geq 3$.  Then there exists a constant $C_k$ depending only on $k$ such that for any prime $p$, there are at most $C_k p^k$ tuples $(c_0,\ldots,c_k) \in \F_p^{k+1}$ that obey the Khintchine property.
\end{theorem}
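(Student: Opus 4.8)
The plan is to exhibit a family of counterexamples — essentially a single ``master'' construction — that defeats the Khintchine property for every tuple $(c_0,\ldots,c_k)$ lying outside an explicit low-dimensional subvariety $W_k$ of the affine space $\F_p^{k+1}$, and then to conclude by counting $\F_p$-points of $W_k$. So the statement will follow from: (a) if $(c_0,\ldots,c_k)$ has the Khintchine property then $(c_0,\ldots,c_k)\in W_k(\F_p)$; and (b) $|W_k(\F_p)|\le C_k p^{k}$.

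\textbf{Step 1: reductions.} First I would record two structural reductions. The Khintchine property is invariant under invertible affine substitutions $c_i\mapsto ac_i+b$ (noted in the text after Definition~\ref{khin-def}), so we may normalise $c_0=0$, $c_1=1$. Next, a subset $S\subseteq\F_p^\omega$ is syndetic if and only if there is an $N$ with $S+\F_p^N=\F_p^\omega$, i.e. $S$ meets every coset of $\F_p^N$ (any finite $F$ witnessing syndeticity is contained in some $\F_p^N$); hence $S$ fails to be syndetic iff for every $N$ there is a coset of $\F_p^N$ that is disjoint from $S$. Finally, by Theorem~\ref{char-thm}, Lemma~\ref{equidist-2} and Proposition~\ref{struct}, the correlation $I_{c_0,\ldots,c_k;1_A,\ldots,1_A}(g)$ is computed on the continuous $(k-1)$-step Weyl characteristic factor, and agrees, up to an error vanishing in uniform density, with the explicit Weyl sequence $J_{c_0,\ldots,c_k;1_A,\ldots,1_A}(g)$, an integral of $1_A^{\otimes(k+1)}$ over the coset $HP_{c_0,\ldots,c_k}(\mathrm{Z}_{k})_{\theta(g)}$. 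Since a set of uniform density zero is not syndetic (a syndetic set has uniform upper density at least $1/|F|$), it suffices, for $c\notin W_k$, to produce an ergodic continuous Weyl system $\mathrm X$, a set $A\in\X$ and $\eps>0$ such that the set where $I_{c_0,\ldots,c_k;1_A,\ldots,1_A}(g)\ge\mu(A)^{k+1}-\eps$ misses a coset of $\F_p^N$ for every $N$; and on the Weyl systems we construct, $I$ and $J$ can be computed directly, so the passage between them is harmless (this is the device used in \cite{bhk}).

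\textbf{Step 2: the forced-positivity locus $W_k$.} Expanding $J_{c_0,\ldots,c_k;1_A,\ldots,1_A}(g)$ into Fourier characters of the structure groups $U_1,\ldots,U_k$ of $\mathrm{Z}_k$, the diagonal character contributes $\mu(A)^{k+1}$, while every remaining contribution is governed by a nonzero character $\xi=(\xi_0,\ldots,\xi_k)$ of $(U_1\times\cdots\times U_k)^{k+1}$ lying in the annihilator of the Hall--Petresco group; in the $U_j$-slot this annihilator is spanned by the (unique up to scalar) linear relation satisfied by the values at $c_0,\ldots,c_k$ of all polynomials of degree $<j+1$, i.e. by a divided-difference vector $\lambda^{(j)}=\lambda^{(j)}(c_0,\ldots,c_k)$. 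Such a contribution is \emph{forced} to be non-negative — and can therefore cancel a deficit engineered into $A$ — precisely when the relevant relation vector can be arranged into $\pm$-conjugate pairs, which happens iff among the $c_i$ there is a ``parallelogram'' relation $c_i+c_j=c_{k'}+c_{l'}$ (the $j=1$ case) or one of its higher-degree analogues. I define $W_k\subseteq\F_p^{k+1}$ to be the $\F_p$-points of the Zariski closure of the set of $c$ admitting such a forced-positive contribution; by construction it is a union of at most $C_k'$ proper subvarieties, each cut out by the vanishing of divided-difference / Vandermonde-type polynomials of degree $O_k(1)$, hence each of dimension at most $k$. (For $k=3$ this recovers exactly the parallelogram locus, consistent with Theorem~\ref{triple}.)

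\textbf{Step 3: the counterexample off $W_k$, and the main obstacle.} For $c\notin W_k$ there is no forced-positive Fourier mode, and I would adapt the Behrend-type construction of \cite{bhk} (the one alluded to after Theorem~\ref{triple}): build a continuous ergodic $(k-1)$-step Weyl $\F_p^\omega$-system with large structure groups of the form $\prod\F_p$ and ``generic'' polynomial cocycles, so that the top phase $g\mapsto\theta_{k-1}(g)$ is a genuine degree-$(k-1)$ polynomial mapping onto arbitrarily deep finite-dimensional scales (note that on a $1$-step system the phase is linear and $\{I\ge\mu(A)^{k+1}-\eps\}$ always contains a finite-index subgroup, so one genuinely needs higher step), and let $A$ be a Behrend-type set of fixed density $\alpha$ realised at every scale. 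On such a system $I_{c;1_A,\ldots}(g)=\alpha^{k+1}+\Psi(g)$ where $\Psi$ is a Weyl sequence with vanishing scale-averages which, thanks to the Behrend structure of $A$ and the \emph{absence} of forced-positive modes, is $\le-\eps$ on a set containing a coset of $\F_p^N$ for every $N$; one locates such cosets by taking the base point to have large coordinates at a deep scale while the $\F_p^N$-direction perturbs only lower scales. The delicate points — where the bulk of the work lies — are (i) making the Behrend deficit \emph{robust}, i.e. bounded away from $0$ uniformly across scales and even on the zero locus of $\theta_{k-1}$, so that $\{I\ge\alpha^{k+1}-\eps\}$ really does miss a full coset of every $\F_p^N$ rather than merely most of one; and (ii) verifying that ``$c\notin W_k$'' is exactly the hypothesis ensuring no low-complexity non-negative term re-inflates the correlation back above $\alpha^{k+1}-\eps$. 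I expect (i) to be the main obstacle, as in \cite{bhk}.

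\textbf{Step 4: counting.} Finally, since $W_k$ is a union of $O_k(1)$ explicit varieties of dimension at most $k$ and degree $O_k(1)$, the Lang--Weil estimate (or, for the hypersurface pieces, Schwartz--Zippel) gives $|W_k(\F_p)|\le C_k p^{k}$. Combining with Step~3, any $(c_0,\ldots,c_k)\in\F_p^{k+1}$ with the Khintchine property must lie in $W_k(\F_p)$, so there are at most $C_k p^{k}$ of them, which is the assertion of the theorem.
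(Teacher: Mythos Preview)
Your high-level architecture (define an exceptional algebraic set $W_k$, build counterexamples off it, count $W_k$) matches the paper's, and your counting Step~4 via Schwartz--Zippel is exactly how the paper finishes. But Step~3 as written is a genuine gap, and the paper's actual construction is quite different from what you propose.

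The gap is that you never actually produce the counterexample. You propose a Behrend-type set ``realised at every scale'' and assert that the correlation takes the form $\alpha^{k+1}+\Psi(g)$ with $\Psi\le-\eps$ on a coset of every $\F_p^N$, but you yourself flag the ``robustness of the Behrend deficit'' as the main obstacle and leave it unresolved. This is not a technicality: adapting Behrend to fixed characteristic $p$ is genuinely delicate (the paper even remarks that doing so in a related context is an open problem), and there is no indication that your construction would achieve the needed uniform deficit on full cosets rather than on most of them. Relatedly, your definition of $W_k$ in Step~2 is heuristic (``forced-positive Fourier modes''); you have not written down equations for it, so the claim that it is a union of $O_k(1)$ varieties of dimension $\le k$ is not yet justified.

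The paper sidesteps the Behrend issue entirely with a much cheaper, \emph{perturbative} construction. First it reduces (via a Bernoulli extension and an explicit skew-shift Weyl system, Theorems~\ref{counter-2} and~\ref{loo}) to producing a non-negative $f\in L^\infty(\T^2)$ with
\[
\int_{HP_{c_0,\ldots,c_k}(\T^2)_\theta} f^{\otimes(k+1)}\,dm < \Bigl(\int f\Bigr)^{k+1}\quad\text{for all }\theta.
\]
Then it takes $f=1+\sum_{(a_1,a_2)\in A}\eps_{a_1,a_2}\,e_p(a_1x_1+a_2x_2)$ with $|A|=8$: specifically $A=\{\pm(\alpha_i c_i,\alpha_i):i=0,1,2,3\}$, where $(\alpha_0,\ldots,\alpha_3)$ are the Lagrange coefficients making $\sum_{i=0}^3\alpha_i c_i^r=0$ for $r=0,1,2$. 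Expanding the left side, the constant term is $1$; the ``canonical'' cross terms coming from $(a_{1,i},a_{2,i})=\pm(\alpha_ic_i,\alpha_i)$ for $i\le3$ are engineered to be $\theta$-independent and strictly negative by choice of phases. The exceptional set $W_k$ is then defined concretely as the locus where \emph{additional} tuples in $(A\cup\{0\})^{k+1}$ satisfy the linear constraints~\eqref{flail}; each such unwanted coincidence is a polynomial equation in $c_0,\ldots,c_k$ of bounded degree, and a short case analysis (using Vandermonde factorisations) shows none holds identically, whence Schwartz--Zippel bounds the bad set by $C_k p^k$. The point is that a single carefully chosen degree-$2$ Fourier mode already beats $\mu(A)^{k+1}$ by a fixed margin for all $\theta$ --- no Behrend set, no multi-scale structure, and the ``robustness'' you worry about is automatic because the perturbation is finite-dimensional and continuous in $\theta$.
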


In other words, if one selects $c_0,\ldots,c_k \in \F_p$ uniformly at random, then the Khintchine property will only hold with probability at most $C_k/p$, and so for large $p$ one has failure of the property for most tuples $(c_0,\ldots,c_k)$.

We establish this result in Section \ref{counter-sec}.  The question remains open as to whether a weakened version of the Khintchine property can hold in which $\mu(A)^{k+1}$ is replaced by a larger power $\mu(A)^{C_k}$ of $\mu(A)$.  In the case of $\Z$-systems, it was shown in \cite[Corollary 1.6]{bhk}  that this is not the case, at least for the model case $k=4$ and $c_i=i$.  However, this argument relies on the Behrend construction \cite{behrend}, and it remains an interesting open problem to usefully adapt this construction to the finite field setting when the characteristic $p$ is fixed.  Note though that it follows from the ``syndetic'' Szemer\'edi theorem for  vector spaces over finite fields \cite{furstenberg-katznelson} that the Khintchine property does hold if $\mu(A)^{k+1}$ is replaced by some sufficiently small quantity $c(k, \mu(A)) > 0$ depending only on $k$ and $\mu(A)$, if $\mu(A)$ is non-zero.

The first author is supported by support NSF grant DMS-1162073.  The second author is supported by NSF grant DMS-0649473 and by a Simons Investigator Award.  The third author is supported by ISF grant 407/12.  The third author was on sabbatical at  Stanford at the time this work was carried out and she would like to thanks the Stanford math department for its hospitality and support. We thank the anonymous referee for many useful corrections and suggestions.

\section{Continuity of polynomials}

In this section we establish a technical lemma that asserts, roughly speaking, that polynomials in an ergodic Weyl system are automatically continuous.  
\begin{lemma}[Polynomials are continuous]\label{polycon}  Let $G=(G,+)$ be a countable abelian group, let $k \geq 0$, and let $\mathrm{X} = (X,\X,\mu,(T_g)_{g \in G})$ be an ergodic $k$-step Weyl system.
\begin{itemize}
\item[(i)]  After modifying the cocycles used to define $\mathrm{X}$ on a measure zero set if necessary, $\mathrm{X}$ becomes a continuous $k$-step Weyl system.
\item[(ii)]  If $\phi: X \to \R/\Z$ is a polynomial, then (after redefining $\phi$ on a measure zero set if necessary), $\phi$ is continuous.
\end{itemize}
\end{lemma}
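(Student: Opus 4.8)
The plan is to prove both parts by induction on the number of steps $k$, with part (i) being the engine that makes part (ii) possible. For $k=0$ both statements are trivial. Assume the result for $(k-1)$-step ergodic Weyl systems, and write $\mathrm{X} = \mathrm{X}' \times_{\rho_k} U_k$, where $\mathrm{X}'$ is a $(k-1)$-step Weyl system and $\rho_k$ is a polynomial $(G,\mathrm{X}',U_k)$-cocycle of degree $<k$. First I would note that since $\mathrm{X}$ is ergodic, so is the factor $\mathrm{X}'$, so the inductive hypothesis applies: after a measure-zero modification, $\mathrm{X}'$ is a continuous $(k-1)$-step Weyl system, and every polynomial on $X'$ is (a.e. equal to) a continuous one. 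The crux is then to show that the cocycle $\rho_k: G \times X' \to U_k$ can be taken continuous in the $X'$ variable for each fixed $g$. For each fixed $g$, $x' \mapsto \rho_k(g,x')$ is a measurable polynomial of degree $<k$ on $X'$, hence by the inductive hypothesis (ii) agrees a.e. with a continuous function; redefining $\rho_k(g,\cdot)$ on a null set (for each $g$ in the countable group $G$, so the union of exceptional null sets is still null) makes $x' \mapsto \rho_k(g,x')$ continuous. One must then check the cocycle equation \eqref{cocycle} continues to hold everywhere (or on a set whose complement can be further absorbed), which is routine since both sides are now continuous in $x'$ and agree a.e. This establishes (i) for $\mathrm{X}$.

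For part (ii), let $\phi: X \to \R/\Z$ be a polynomial of degree $<d$ on $\mathrm{X} = \mathrm{X}' \times_{\rho_k} U_k$, now taken to be continuous in its $X'$-coordinate by (i). The idea is to peel off the $U_k$-dependence using Fourier analysis on $U_k$. Expand $\phi$ in the $U_k$ variable: for $\mu'$-a.e. $x' \in X'$, write $e^{2\pi i \phi(x',u)} = \sum_{\chi \in \widehat{U_k}} c_\chi(x') \chi(u)$. Because $\widetilde T_g$ shifts the $U_k$-coordinate by $\rho_k(g,x')$ and $\phi$ being a polynomial of degree $<d$ forces $e^{2\pi i \phi}$ to transform in a controlled way under the $\Delta_g$ operators, each nonzero Fourier coefficient $c_\chi$ must itself satisfy a polynomial-type functional equation on $X'$: specifically, $\Delta_{g_1}\cdots\Delta_{g_d}$ applied to $\phi$ vanishes, and expanding this identity in the $U_k$-variable shows that for each character $\chi$ appearing, the associated "twisted" function on $X'$ is a polynomial (of degree controlled by $d$ and $k$). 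By the inductive hypothesis applied to $X'$, these are continuous; moreover an ergodicity argument (the fibered action forces $|c_\chi|$ to be a.e. constant, so $c_\chi$ is a continuous function times a unimodular constant, and only finitely many $\chi$ occur by a degree/rank bound) shows the Fourier expansion is a finite sum of continuous terms. Hence $e^{2\pi i \phi}$ is continuous, and since $\phi$ takes values in $\R/\Z$ we can lift to conclude $\phi$ itself is continuous after a null modification.

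The main obstacle I expect is the Fourier-analytic step in part (ii): controlling \emph{which} characters $\chi \in \widehat{U_k}$ can appear and showing the sum is effectively finite, together with verifying that each surviving Fourier coefficient is governed by a bona fide polynomial equation on $X'$ of bounded degree so that the inductive hypothesis bites. This requires carefully tracking how the degree-$<d$ condition on $\phi$ interacts with the degree-$<k$ cocycle $\rho_k$ under iterated differences $\Delta_{g_1}\cdots\Delta_{g_d}$, and here the hypothesis $k < p$ (so that no "wraparound" phenomena occur with the $p$-torsion structure groups and binomial coefficients $\binom{c}{l}$ remain invertible) is what keeps the combinatorics clean. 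A secondary but more routine technical point is the bookkeeping of null sets: each redefinition is on a null set, there are countably many group elements, and one must ensure the cumulative exceptional set remains null and that all the (everywhere-stated) algebraic identities — the cocycle equation and the vanishing of iterated differences — survive outside it.
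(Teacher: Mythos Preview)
Your inductive framework and treatment of part (i) match the paper's proof essentially verbatim: ergodicity passes to the factor $\mathrm{X}'$, the inductive hypothesis (ii) makes each $\rho_k(g,\cdot)$ continuous, and the countability of $G$ handles the null-set bookkeeping.

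For part (ii), however, there is a genuine gap. Your proposed mechanism is a Fourier expansion of $e^{2\pi i\phi}$ in the $U_k$-variable, followed by the claim that only finitely many characters $\chi$ survive ``by a degree/rank bound''. You correctly flag this finiteness as the main obstacle, but the justification you offer does not work: you invoke a hypothesis $k<p$ and $p$-torsion structure, yet the lemma is stated for an \emph{arbitrary} countable abelian group $G$, with no characteristic in sight, and $U_k$ is merely a compact abelian group. There is no degree/rank bound available in this generality that forces the Fourier support of $e^{2\pi i\phi}$ to be finite, and the vague ``polynomial-type functional equation'' for $c_\chi$ is not pinned down.

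The paper's argument for (ii) supplies exactly the missing idea, and it is rather different from a direct Fourier expansion. One takes \emph{vertical derivatives} $\Delta_t\phi(x',u):=\phi(x',u+t)-\phi(x',u)$ for $t\in U_k$. These are polynomials of uniformly bounded degree, and by measurability of $\phi$ one has $\|e(\Delta_t\phi)-1\|_{L^2}\to 0$ as $t\to 0$. A rigidity lemma for polynomials (\cite[Lemma C.1]{bergelson-tao-ziegler}) then forces $e(\Delta_t\phi)$ to be a.e.\ constant for $t$ near $0$; this yields a compact open subgroup $U'_k\leq U_k$ and a character $\chi:U'_k\to\R/\Z$ (extendable to $U_k$ by Pontryagin duality) with $\Delta_t\phi=\chi(t)$ a.e.\ for $t\in U'_k$. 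Subtracting $\chi(u_k)$ from $\phi$ produces a polynomial constant on $U'_k$-orbits, so one may quotient to the system $\mathrm{X}_{k-1}\times_{\rho_k\bmod U'_k}(U_k/U'_k)$ with \emph{finite} last structure group. In that finite case the slices $\phi_{u_k}(x'):=\phi(x',u_k)$ are polynomials on $\mathrm{X}_{k-1}$ (\cite[Lemma B.5(iii)]{bergelson-tao-ziegler}), continuous by induction, and one glues. The rigidity step is what replaces your unproven finiteness claim.
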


\begin{proof}  We induct on $k$.  The $k=0$ case is trivial, so suppose that $k \geq 1$ and the claims (i), (ii) have already been proven for all smaller values of $k$.  The claim (i) for $k$ then follows by applying the induction hypothesis (ii) to all the cocycles used to construct $\mathrm{X}$, so now we turn to claim (ii) for $k$.  Write $\mathrm{X} = \mathrm{X}_{k-1} \times_{\rho_k} U_k$ for some compact abelian $U_k$ and some polynomial $(G,\mathrm{X}_{k-1},U_k)$ cocycle $\rho_k$ of degree $<k$.  By claim (i) for $k$, we may assume without loss of generality that all cocycles involved in constructing $\mathrm{X}$ (including $\rho_k$) are continuous.  

Let us first handle the case when the compact group $U_k$ is finite (and thus discrete).  For each $u_k \in U_k$, the function $\phi_{u_k}: X_{k-1} \to \R/\Z$ defined by
$$ \phi_{u_k}(x_{k-1}) := \phi(x_{k-1},u_k)$$
is a polynomial on $\mathrm{X}_{k-1}$ (see \cite[Lemma B.5(iii)]{bergelson-tao-ziegler}), and can thus be modified on a set of zero to become continuous.  Applying this for each $u_k$ and gluing, we obtain the claim.

Now we handle the general case when $U_k$ is not necessarily finite.  For every $t \in U_k$, define the \emph{vertical derivative} $\Delta_t \phi: X \to \R/\Z$ of $\phi$ by the formula
$$ \Delta_t \phi( x_{k-1}, u_k ) := \phi(x_{k-1}, u_k+t) - \phi(x_{k-1},u_k).$$
As $\phi$ is a polynomial, $\Delta_t \phi$ is a polynomial of uniformly bounded degree.  On the other hand, as $\phi$ is measurable, $\Delta_t \phi$ converges to zero in measure as $t \to 0$ in $U_k$, and in particular
$$ \| e(\Delta_t \phi) - 1 \|_{L^2(\mathrm{X})} \to 0$$
as $t \to 0$, where $e(x) := e^{2\pi ix}$ is the standard character on $\R/\Z$.   By \cite[Lemma C.1]{bergelson-tao-ziegler} and the uniformly bounded degree of $\Delta_t \phi$, we conclude that $e(\Delta_t \phi)$ must be almost everywhere constant for $t$ sufficiently close to $0$.  In particular, for $t$ sufficiently close to zero there exists $\chi(t) \in \R/\Z$ such that $\Delta_t \phi = \chi(t)$ almost everywhere.  The set of all $t$ with this property is easily seen to form a compact open subgroup $U'_k$ of $U_k$, and $\chi$ is a homomorphism from $U'_k$ to $\R/\Z$ which is measurable, and hence continuous (Steinhaus lemma).  By Pontryagin duality, $\chi$ can then be extended to an additive character from $U_k$ to $\R/\Z$.  If we then define the function $\psi: X \to \R/\Z$ by
$$ \psi( x_{k-1}, u_k ) := \phi(x_{k-1}, u_k) - \chi(u_k)$$
then we see that $\psi$ is also a polynomial on $\mathrm{X}$, with $\Delta_t \psi = 0$ for $t \in U'_k$; thus, after modification on a set of measure zero, $\psi$ is constant on all $U'_k$-orbits.  We can then quotient $U_k$ and $\rho_k$ by $U'_k$ and reduce to a quotiented Weyl system $\mathrm{X}_{k-1} \times_{\rho_k \hbox{ mod } U'_k} U_k/U'_k$ in which the final group $U_k/U'_k$ is finite.  By the case already treated, the quotiented version of $\psi$ can be modified on a set of measure zero to become continuous on this system.  Thus, on applying pullback, the same claim holds for $\psi$ and hence for $\phi$, giving the claim. 
\end{proof}

\section{Gowers-Host-Kra seminorms and characteristic factors}\label{char-sec}

In this section we derive Theorem \ref{char-thm} from the theory of Gowers-Host-Kra seminorms on $\F_p^\omega$-systems as developed in \cite{bergelson-tao-ziegler}.  The material here is very standard for $\Z$-systems (see \cite{host-kra}), and the adaptation of that theory to $\F_p^\omega$-systems is routine, but we present it here for the sake of completeness.

We first recall the definition of the Gowers-Host-Kra seminorms, introduced in \cite{host-kra} (and closely related to the combinatorial Gowers uniformity norms from \cite{gowers}):

\begin{definition}[Gowers-Host-Kra seminorms]\cite{host-kra}  Let $G = (G,+)$ be a countable abelian group, and let $\mathrm{X} = (X, \X, \mu, (T_g)_{g \in G})$ be a $G$-system.  For any $f \in L^\infty(\mathrm{X})$, we define the \emph{Gowers-Host-Kra seminorms} $\|f\|_{U^k(\mathrm{X})}$ recursively for $k \geq 1$ by setting
\begin{equation}\label{u1-def}
 \|f\|_{U^1(\mathrm{X})} := \lim_{n \to \infty} \| \E_{h \in \Phi_n} T_h f \|_{L^2(\mathrm{X})}
\end{equation}
and
\begin{equation}\label{uk-def}
 \|f\|_{U^k(\mathrm{X})} := \left(\lim_{n \to \infty} \| \E_{h \in \Phi_n} T_h f \overline{f} \|_{U^{k-1}(\mathrm{X})}^{2^{k-1}}\right)^{1/2^k}
 \end{equation}
for any $k \geq 2$ and any F{\o}lner sequence $(\Phi_n)_{n=1}^\infty$ of $G$.
\end{definition}

One can show that the above definitions are in fact independent of the choice of the F{\o}lner sequence, and define a sequence of seminorms on $L^\infty(\mathrm{X})$; see\footnote{Strictly speaking, in that lemma the additional hypothesis of nestedness $\Phi_1 \subset \Phi_2 \subset \ldots$ of the F{\o}lner sequence is imposed, but an inspection of the proof shows that this hypothesis is not needed (because the mean ergodic theorem holds for non-nested F{\o}lner sequences).} \cite[Lemma A.18]{bergelson-tao-ziegler}.  From the mean ergodic theorem, we observe that the $U^1$ seminorm can also be written as
\begin{equation}\label{lo}
\|f\|_{U^1(\mathrm{X})} = \| (\pi_0)_* f \|_{L^2(\mathrm{Z}_0)}
\end{equation}
where $(\mathrm{Z}_0,\pi_0)$ is the invariant factor.

The significance of these seminorms for us is that they control the convergence of expressions such as \eqref{tgf-mult}.  More precisely, we have the following minor variant of \cite[Theorem 12.1]{host-kra}:

\begin{lemma}[Generalized van der Corput lemma]\label{vdc}  Let $G = \F_p^\omega$ for a prime $p$, and let $\mathrm{X} = (X, \X, \mu, (T_g)_{g \in G})$ be a $G$-system.  Let $1 \leq k < p$, and let $c_1,\ldots,c_k$ be distinct elements of $\F_p \backslash \{0\}$.  Let $f_1,\ldots,f_k \in L^\infty(\mathrm{X})$.  Let $(\Phi_n)_{n=1}^\infty$ be a F{\o}lner sequence of $G$.  Then we have
$$ \limsup_{n \to \infty} \| \E_{g \in \Phi_n} (T_{c_1g} f_1) (T_{c_2g} f_2) \ldots (T_{c_kg} f_k) \|_{L^2(\mathrm{X})} \leq 
\inf_{1 \leq i \leq k} \| f_i \|_{U^k(\mathrm{X})} \prod_{1 \leq j \leq k:j \neq i} \|f_j\|_{L^\infty(\mathrm{X})}.$$
\end{lemma}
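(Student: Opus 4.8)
The plan is to prove this by induction on $k$, using the van der Corput / Cauchy--Schwarz trick at each step to trade one factor of the average for a factor involving a shifted product of the same type but with $k$ replaced by $k-1$, while picking up a factor of the lower Gowers--Host--Kra seminorm. The base case $k=1$ reduces (after renormalizing by $c_1 \neq 0$, which is invertible in $\F_p$ and so sends a F{\o}lner sequence to a F{\o}lner sequence) to the assertion $\limsup_n \|\E_{g \in \Phi_n} T_g f_1\|_{L^2(\mathrm{X})} = \|f_1\|_{U^1(\mathrm{X})}$, which is exactly the definition \eqref{u1-def}. So the work is in the inductive step.

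For the inductive step, fix the index $i$ achieving the infimum; by relabelling we may assume $i=k$ (the symmetric roles of the $c_j$ allow this, since the only distinguished feature so far is that all $c_j$ are nonzero). Write $F_n(x) := \E_{g \in \Phi_n}(T_{c_1 g}f_1)\ldots(T_{c_k g}f_k)(x)$. Applying the van der Corput lemma in the form used in \cite[Theorem 12.1]{host-kra} (i.e. estimating $\|F_n\|_{L^2}^2$ by a double average over $g$ and a shift $h$, then passing to the limit along $\Phi_n$ and using that $(h+\Phi_n)$ is again F{\o}lner), one bounds $\limsup_n \|F_n\|_{L^2(\mathrm{X})}^2$ by a $\limsup$ over $h$ of expressions of the form
$$ \limsup_{n \to \infty} \left| \int_X \E_{g \in \Phi_n} \prod_{j=1}^{k} (T_{c_j g} f_j)(x)\, \overline{(T_{c_j(g+h)} f_j)(x)}\ d\mu \right|. $$
Now substitute $g' = g + c_k h$ (a F{\o}lner-preserving shift), so that the $j=k$ factors become $\overline{f_k} \cdot T_{c_k g'} f_k$-type terms that, after the substitution, contribute $(T_{c_k g'} (f_k \overline{T_{c_k h} f_k}))$ up to a global unitary; more precisely one arranges that the $j=k$ pair collapses to a single function $T_{c_k g'}(f_k \overline{\Delta}_{c_k h} \ldots)$, leaving a product over $j=1,\ldots,k-1$ of pairs $(T_{c_j g'} f_j)\overline{(T_{c_j g' + (c_j - c_k) c_k h'} \overline{f_j})}$. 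The key algebraic point is that the differences $c_j - c_k$ for $j=1,\ldots,k-1$ are nonzero in $\F_p$ (since the $c_j$ are distinct), so after collecting, the inner average is of the same shape as the original with $k$ replaced by $k-1$, with the $(k-1)$ functions being the products $g_j := f_j \cdot \overline{T_{(c_j-c_k)c_k h} f_j}$ (bounded by $\|f_j\|_{L^\infty}^2$), and with coefficients $(c_j - c_k)$ which are distinct nonzero elements of $\F_p$ — here we use $k-1 < p$ as well. One must also keep track of the single leftover factor $T_{c_k h}f_k$ and a function of $h$ only; these do not affect the $L^\infty$ bookkeeping.

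Applying the induction hypothesis to this $(k-1)$-fold average gives, for each $h$, a bound by $\|g_1\|_{U^{k-1}(\mathrm{X})} \prod_{j=2}^{k-1}\|g_j\|_{L^\infty} \cdot \prod_{j}\|f_j\|_{L^\infty}^{O(1)}$ (with the precise exponents arranged to come out right). Taking the $\limsup$ over $h$ and using the recursive definition \eqref{uk-def} of $\|f_k\|_{U^k(\mathrm{X})}$ — namely that $\limsup_h \|f_1 \overline{T_h f_1}\|_{U^{k-1}}^{2^{k-1}}$ is (up to the usual limit-versus-limsup and F{\o}lner-independence issues, handled by \cite[Lemma A.18]{bergelson-tao-ziegler}) exactly $\|f_1\|_{U^k}^{2^k}$ — collapses the right-hand side to $\|f_k\|_{U^k(\mathrm{X})}^{?}\prod_{j \neq k}\|f_j\|_{L^\infty}^{?}$ with the correct exponents after taking the square root. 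Since $i=k$ was the index achieving the infimum, this is the desired estimate. The main obstacle, and the only place real care is needed, is the bookkeeping in the van der Corput step: getting the substitution $g \mapsto g + c_k h$ to genuinely decouple the $k$-th factor and produce coefficients $c_j - c_k$ that are distinct and nonzero, and making sure the exponents on the $L^\infty$ norms and the Gowers seminorm come out to match the statement exactly (rather than with extra powers). This is routine but must be done carefully; everything else (F{\o}lner invariance under the affine substitutions, independence of the seminorms from $(\Phi_n)$) is available from the cited lemmas.
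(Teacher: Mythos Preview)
Your overall plan (induction on $k$, van der Corput at each step, then the recursion \eqref{uk-def}) is the same as the paper's, and the base case is handled correctly. But the inductive step as you describe it has a genuine gap in the bookkeeping of \emph{which} factor is eliminated, and this is not merely cosmetic.

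After the van der Corput step you correctly arrive at an expression of the form
\[
\int_X \prod_{j=1}^k T_{c_j g}\bigl( (T_{c_j h} f_j)\,\overline{f_j}\bigr)\,d\mu
\]
(up to conjugation conventions). To drop to a $(k-1)$-fold average, one uses measure invariance to shift by $-c_{j_0} g$ inside the integral, making the $j=j_0$ factor $g$-independent; Cauchy--Schwarz then removes it at the cost of its $L^\infty$ norm. You choose $j_0 = k$: the leftover factor is $(T_{c_k h} f_k)\overline{f_k}$, and induction is applied to the $g_j := f_j \overline{T_{(c_j-c_k)h} f_j}$ for $j=1,\ldots,k-1$. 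But then the $U^{k-1}$ seminorm produced by the induction hypothesis lands on one of $g_1,\ldots,g_{k-1}$, i.e.\ on a differenced version of some $f_j$ with $j \neq k$; averaging over $h$ then yields $\|f_j\|_{U^k}$ for $j \neq k$, not $\|f_k\|_{U^k}$. Your write-up in fact computes $\|f_1\|_{U^k}$ and then silently asserts this equals $\|f_k\|_{U^k}$, which is the gap.

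The fix (and the paper's route) is to pull out a factor with index \emph{different} from $k$ --- say $j_0=1$ --- so that the remaining $(k-1)$-fold average runs over $j=2,\ldots,k$ with nonzero distinct coefficients $c_j - c_1$, and the induction hypothesis can be invoked with the distinguished index equal to $k$. This produces $\|(T_h f_k)\overline{f_k}\|_{U^{k-1}}$, and now the $h$-average combines with \eqref{uk-def} to give $\|f_k\|_{U^k}$ as desired. (The paper also uses a two-parameter $h,h'$ form of van der Corput and then a pigeonhole/H\"older step to convert $\E_h \|T_h f_k \overline{f_k}\|_{U^{k-1}}$ into the $2^{k-1}$-th-power average required by \eqref{uk-def}; your phrase ``$\limsup$ over $h$'' is not right here --- it must be an average, and H\"older is needed to match the exponents.)
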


\begin{proof} We induct on $k$.  When $k=1$, we may rescale $\Phi_n$ by $c_1$ to normalize $c_1=1$, and then the claim follows from \eqref{u1-def}.  Now suppose that $k > 1$, and that the claim has already been proven for $k-1$.  By permuting indices, it suffices to show that
\begin{equation}\label{limsup}
 \limsup_{n \to \infty} \| \E_{g \in \Phi_n} F_g \|_{L^2(\mathrm{X})} \leq 
\| f_k \|_{U^k(\mathrm{X})}
\end{equation}
where $\|f_i\|_{L^\infty(\mathrm{X})} \leq 1$ for $i=1,\ldots,k-1$, and
$$ F_g := (T_{c_1g} f_1) (T_{c_2g} f_2) \ldots (T_{c_kg} f_k).$$
We may also normalize $c_k=1$.
Using the F{\o}lner property, we can rewrite the left-hand side of \eqref{limsup} as
$$
\limsup_{m \to \infty} \limsup_{n \to \infty} \| \E_{g \in \Phi_n} \E_{h \in \Phi_m} F_{g+h} \|_{L^2(\mathrm{X})}$$
which we can then bound using the triangle inequality by
$$
\limsup_{m \to \infty} \limsup_{n \to \infty} \E_{g \in \Phi_n} \| \E_{h \in \Phi_m} F_{g+h} \|_{L^2(\mathrm{X})}.$$
By Cauchy-Schwarz, this is bounded by
$$
\left(\limsup_{m \to \infty} \limsup_{n \to \infty} \E_{g \in \Phi_n} \| \E_{h \in \Phi_m} F_{g+h} \|_{L^2(\mathrm{X})}^2\right)^{1/2}$$
which we may expand as
$$
\left(\limsup_{m \to \infty} \limsup_{n \to \infty} \E_{h,h' \in \Phi_m} \int_X \E_{g \in \Phi_n} F_{g+h} \overline{F_{g+h'}}\ d\mu\right)^{1/2}.$$
We may upper bound this expression by
$$
\left(\limsup_{m \to \infty} \E_{h,h' \in \Phi_m} \limsup_{n \to \infty} |\int_X \E_{g \in \Phi_n} F_{g+h} \overline{F_{g+h'}}|\ d\mu\right)^{1/2}.$$
Now, for each $h,h'$ we may write
$$
\int_X \E_{g \in \Phi_n} F_{g+h} \overline{F_{g+h'}} = \int_X (T_{c_1 h} f_1) \overline{T_{c_1 h'} f_1} \E_{g \in \Phi_n} \prod_{i=2}^k T_{(c_i-c_1)g} ((T_{c_i h} f_i) \overline{T_{c_i h'} f_i})\ d\mu.$$
Applying Cauchy-Schwarz and the induction hypothesis (and the normalization $c_k=1$), we conclude that
$$ \limsup_{n \to \infty} |\int_X \E_{g \in \Phi_n} F_{g+h} \overline{F_{g+h'}}|\ d\mu \leq \| (T_{h} f_k) \overline{T_{h'} f_k} \|_{U^{k-1}(\mathrm{X})}.$$
Note that $(T_h f_k) \overline{T_{h'} f_k}$ has the same $U^{k-1}(\mathrm{X})$ norm as $(T_{h-h'} f_k) \overline{f_k}$.
Putting all this together, we can bound the left-hand side of \eqref{limsup} by
$$
\left(\limsup_{m \to \infty} \E_{h,h' \in \Phi_m} \| T_{h-h'} f_k \overline{f_k} \|_{U^{k-1}(\mathrm{X})}\right)^{1/2}.
$$
By the triangle inequality and the pigeonhole principle, we may bound this by
$$
(\limsup_{m \to \infty} \E_{h \in \Phi_m - h'_m} \| T_h f_k \overline{f_k} \|_{U^{k-1}(\mathrm{X})})^{1/2}
$$
for some sequence $h'_m \in \Phi_m$; by H\"older's inequality, we may bound this by
$$
(\limsup_{m \to \infty} \E_{h \in \Phi_m - h'_m} \| T_h f_k \overline{f_k} \|_{U^{k-1}(\mathrm{X})}^{2^{k-1}})^{1/2^k}.
$$
But the $\Phi_m-h'_m$ form a F{\o}lner sequence of $G$, and the claim \eqref{limsup} then follows from \eqref{uk-def}.
\end{proof}

We also need the following variant:

\begin{lemma}[Generalized van der Corput lemma, II]\label{vdc-2}  Let $G = \F_p^\omega$ for a prime $p$, and let $\mathrm{X} = (X, \X, \mu, (T_g)_{g \in G})$ be a $G$-system.  Let $1 \leq k < p$, and let $c_0, c_1,\ldots,c_k$ be distinct elements of $\F_p$.  Let $f_0,\ldots,f_k \in L^\infty(\mathrm{X})$.  Let $(\Phi_n)_{n=1}^\infty$ be a F{\o}lner sequence of $G$.  Then we have
$$ \limsup_{n \to \infty} \sup_{h \in G} \E_{g \in h+\Phi_n} |I_{c_0,\ldots,c_k;f_0,\ldots,f_k}(g)|
\leq \inf_{0 \leq i \leq k} \| f_i \|_{U^{k+1}(\mathrm{X})} \prod_{0 \leq j \leq k:j \neq i} \|f_j\|_{L^\infty(\mathrm{X})}.$$
\end{lemma}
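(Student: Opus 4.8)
The plan is to run the same van der Corput / Cauchy-Schwarz argument used in the proof of Lemma \ref{vdc}, but carried out ``under the integral sign'' so that the uniform-density supremum over $h \in G$ is tracked throughout, and producing a $U^{k+1}$ seminorm rather than a $U^k$ seminorm because there are now $k+1$ functions $f_0,\ldots,f_k$ rather than $k$. By permuting indices it suffices to bound the left-hand side by $\|f_0\|_{U^{k+1}(\mathrm{X})}$, assuming $\|f_i\|_{L^\infty(\mathrm{X})} \le 1$ for $i=1,\ldots,k$; after translating the averaging set we may also assume $c_0 = 0$ (replace $\Phi_n$ by an affinely rescaled Følner sequence; note that the class of Følner sequences is translation- and dilation-invariant, and the uniform-density property is unaffected). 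So the quantity to control is
$$
\limsup_{n\to\infty}\sup_{h\in G}\E_{g\in h+\Phi_n}\left| \int_X f_0\,(T_{c_1 g}f_1)\cdots(T_{c_k g}f_k)\,d\mu\right|.
$$

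First I would bring in an auxiliary averaging parameter: using the Følner property of $(\Phi_n)$ one rewrites $\E_{g\in h+\Phi_n}$ as a double average $\E_{g\in h+\Phi_n}\E_{g'\in\Phi_m}$ applied to the shifted integrand $I(g+g')$, up to an error that vanishes as $n\to\infty$ then $m\to\infty$, uniformly in $h$. Then apply Cauchy-Schwarz in $g$ to pass to the square of the $\E_{g'\in\Phi_m}$-average, expand the square into a double average over $g',g''\in\Phi_m$, and interchange: this produces (after taking $\limsup_n$ inside, bounded by $\sup_h$) an expression of the shape
$$
\limsup_{m\to\infty}\E_{g',g''\in\Phi_m}\ \limsup_{n\to\infty}\sup_{h}\left|\E_{g\in h+\Phi_n}\int_X \big(T_{c_0 g}\text{-shifted product in }g'\big)\overline{\big(\cdots\text{ in }g''\big)}\,d\mu\right|.
$$
Because $c_0=0$, the $f_0$-factor does not depend on $g$, so the inner $g$-average only sees the factors indexed $1,\ldots,k$; after writing the product of the $g'$ and $g''$ copies as a single correlation of the $k$ functions $F_i := (T_{c_i g'}f_i)\overline{T_{c_i g''}f_i}$ against the ``coefficient-$0$'' function $f_0\overline{f_0}$, the inner average is a $k$-term expression of exactly the type handled by the induction hypothesis (Lemma \ref{vdc-2} at level $k-1$, with distinct coefficients $0,c_1,\ldots,c_k$). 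The induction hypothesis bounds the inner $\limsup_n\sup_h$ by $\min_i \|F_i\|_{U^k(\mathrm{X})}$ — in particular by $\|F_1\|_{U^k(\mathrm{X})}$, say, but more usefully one should select the index so as to recover a power of $f_0$; here one instead keeps $f_0\overline{f_0}$ as a factor estimated in $L^\infty$ and bounds by $\|(T_{c_1 g'}f_1)\overline{T_{c_1 g''}f_1}\|_{U^k(\mathrm{X})}$, which by shift-invariance of the seminorm equals $\|(T_{c_1(g'-g'')}f_1)\overline{f_1}\|_{U^k(\mathrm{X})}$.

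Finally I would feed this back: the outer $\E_{g',g''\in\Phi_m}$ average of $\|(T_{c_1(g'-g'')}f_1)\overline{f_1}\|_{U^k(\mathrm{X})}$ is, after the change of variables $h:=c_1(g'-g'')$ and the pigeonhole/Følner manipulation used at the end of Lemma \ref{vdc}'s proof (the difference set $c_1(\Phi_m-g''_m)$ is again a Følner sequence), at most $\big(\limsup_m \E_{h}\|(T_h f_1)\overline{f_1}\|_{U^k(\mathrm{X})}^{2^k}\big)^{1/2^k}$ by Hölder, and the recursive definition \eqref{uk-def} identifies this with $\|f_1\|_{U^{k+1}(\mathrm{X})}$. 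Chasing the exponents from the various Cauchy-Schwarz steps back through gives the bound $\|f_1\|_{U^{k+1}(\mathrm{X})}$ on the original quantity, and by the index permutation this yields $\inf_{0\le i\le k}\|f_i\|_{U^{k+1}(\mathrm{X})}\prod_{j\ne i}\|f_j\|_{L^\infty(\mathrm{X})}$ as claimed. I expect the main obstacle to be bookkeeping: one must carry the $\sup_{h\in G}$ through every step (so each use of the Følner property and each interchange of $\limsup$'s must be shown to hold uniformly in $h$, which is where separating the ``coefficient $0$'' function matters), and one must be careful that at level $k-1$ the induction hypothesis is applied to a genuinely distinct tuple of $k$ coefficients $0,c_1,\ldots,c_k$, and that the resulting $U^k$ seminorm is of the \emph{right} function ($f_1$, not $f_0\overline{f_0}$) so that the recursion \eqref{uk-def} closes up to $U^{k+1}$.
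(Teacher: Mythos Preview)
There is a genuine gap in your approach, and it comes right at the start: you propose to run van der Corput on the $g$-variable by writing $\E_{g\in h+\Phi_n}|I(g)|$ as (approximately) $\E_{g\in h+\Phi_n}\big|\E_{g'\in\Phi_m}I(g+g')\big|$. This step is not valid. The F{\o}lner shift gives $\E_{g\in h+\Phi_n}|I(g)| \approx \E_{g\in h+\Phi_n}\E_{g'\in\Phi_m}|I(g+g')|$, but the absolute value stays inside the $g'$-average; you cannot move it outside to get $\big|\E_{g'}I(g+g')\big|$, and without that you have no inner average to square via Cauchy--Schwarz. The van der Corput mechanism you are invoking applies to quantities of the form $\big|\E_g a_g\big|$ or $\big\|\E_g F_g\big\|_{L^2}$ (average \emph{inside} the norm, as in Lemma \ref{vdc}), not to $\E_g|a_g|$ (absolute value inside the average), which is what you face here. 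The subsequent confusion in your outline --- setting up to bound by $\|f_0\|_{U^{k+1}}$ but concluding with $\|f_1\|_{U^{k+1}}$, and invoking an ``induction hypothesis at level $k-1$'' on a tuple that still has $k+1$ distinct coefficients --- is a symptom of this: once the first step fails, nothing downstream lines up.

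The paper gets around this by a different maneuver. First it removes the $\sup_{h}$ at the outset: for each $n$ choose $h_n$ nearly realizing the supremum, and observe that $(h_n+\Phi_n)$ is again a F{\o}lner sequence, so it suffices to bound $\limsup_n \E_{g\in\Phi_n}|I(g)|$ for an \emph{arbitrary} F{\o}lner sequence. Second --- and this is the key idea you are missing --- it introduces the extra average \emph{spatially}, not in the $g$-variable: by measure-preservation $I(g)=\int_X F_g\,d\mu=\int_X T_hF_g\,d\mu$ for every $h$, hence $I(g)=\int_X \E_{h\in\Phi_m}T_hF_g\,d\mu$ and then $|I(g)|\le\big(\int_X|\E_{h\in\Phi_m}T_hF_g|^2\,d\mu\big)^{1/2}$ by Cauchy--Schwarz in the $X$-integral. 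Now there is an honest inner average to expand, and after expanding the square one can apply Lemma \ref{vdc} directly (not an induction on Lemma \ref{vdc-2}) to the resulting $g$-average of $\int_X\prod_j T_{c_jg}\big((T_hf_j)\overline{T_{h'}f_j}\big)\,d\mu$, yielding $\|(T_hf_k)\overline{T_{h'}f_k}\|_{U^k}$; averaging over $h,h'$ and the recursive definition \eqref{uk-def} then give $\|f_k\|_{U^{k+1}}$.
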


\begin{proof}  As before, it suffices to show that
$$ \limsup_{n \to \infty} \sup_{h \in G} \E_{g \in h+\Phi_n} |I_{c_0,\ldots,c_k;f_0,\ldots,f_k}(g)|
\leq \|f_k\|_{U^{k+1}(\mathrm{X})}$$
under the normalization $\|f_j\|_{L^\infty(\mathrm{X})} \leq 1$ for $0 \leq j < k$.  

Next, we remove the supremum in the above estimate.  Suppose that we can already show that
\begin{equation}\label{lsup}
\limsup_{n \to \infty} \E_{g \in \Phi_n} |I_{c_0,\ldots,c_k;f_0,\ldots,f_k}(g)|
\leq \|f_k\|_{U^{k+1}(\mathrm{X})}.
\end{equation}
For any $\eps>0$, and any $n$, we can find $h_n = h_{n,\eps} \in G$ such that
$$ \sup_{h \in G} \E_{g \in h+\Phi_n} |I_{c_0,\ldots,c_k;f_0,\ldots,f_k}(g)| \leq (1+\eps) 
\E_{g \in h_n+\Phi_n} |I_{c_0,\ldots,c_k;f_0,\ldots,f_k}(g)|.$$
Applying \eqref{lsup} to the F{\o}lner sequence $(h_n+\Phi_n)_{n=1}^\infty$ we conclude that
$$ \limsup_{n \to \infty} \sup_{h \in G} \E_{g \in h+\Phi_n} |I_{c_0,\ldots,c_k;f_0,\ldots,f_k}(g)|
\leq (1+\eps) \|f_k\|_{U^{k+1}(\mathrm{X})}$$
and the claim then follows by sending $\eps$ to zero.

It remains to establish \eqref{lsup}.  Write $F_g := (T_{c_0g} f_0) \ldots (T_{c_kg} f_k)$, so that $I_{c_0,\ldots,c_k;f_0,\ldots,f_{k}}(g) = \int_X F_g\ d\mu$.  For any $m$, we thus have
$$ |I_{c_0,\ldots,c_k;f_0,\ldots,f_{k}}(g)| = \int_X \E_{h \in \Phi_m} T_h F_g\ d\mu$$
and hence, by Cauchy-Schwarz, the left-hand side of \eqref{lsup} is bounded by
$$
(\limsup_{m \to \infty}
 \limsup_{n \to \infty} \E_{g \in \Phi_n} \int_X |\E_{h \in \Phi_m} T_h F_g|^2\ d\mu)^{1/2}.$$
We can expand this expression as
$$
(\limsup_{m \to \infty}
 \limsup_{n \to \infty} \E_{h, h' \in \Phi_m} \E_{g \in \Phi_n} \int_X (T_h F_g) (T_{h'} \overline{F_g})\ d\mu)^{1/2}$$
which by the triangle inequality is bounded by
$$(\limsup_{m \to \infty}\E_{h, h' \in \Phi_m} 
 |\limsup_{n \to \infty} \E_{g \in \Phi_n} \int_X (T_h F_g) (T_{h'} \overline{F_g})\ d\mu|)^{1/2}.$$
We can rewrite $\int_X (T_h F_g) (T_{h'} \overline{F_g})\ d\mu$ as
$$ \int_X \prod_{j=0}^k T_{c_jg} ( (T_h f_j) \overline{T_{h'} f_j} )\ d\mu.$$
Applying Lemma \ref{vdc}, we may thus bound the left-hand side of \eqref{lsup} by
$$
(\limsup_{m \to \infty}\E_{h, h' \in \Phi_m} \| (T_h f_k) \overline{T_{h'} f_k} \|_{U^{k}(\mathrm{X})})^{1/2}.$$
One can then argue as in the proof of Lemma \ref{vdc} to bound this by $\|f_k\|_{U^{k+1}(\mathrm{X})}$ as required.
\end{proof}

\begin{corollary}\label{gvn-cor}  Let $G = \F_p^\omega$ for a prime $p$, and let $\mathrm{X} = (X, \X, \mu, (T_g)_{g \in G})$ be a $G$-system.  Let $1 \leq k < p$.  Let $f_0,f_1,\ldots,f_k \in L^\infty(\mathrm{X})$.  Let $(\Phi_n)_{n=1}^\infty$ be a F{\o}lner sequence of $G$.  
\begin{itemize}
\item[(i)]  If $c_1,\ldots,c_k$ are distinct nonzero elements of $\F_p$, then the sequence
$$ \E_{g \in \Phi_n} (T_{c_1 g} f_1) \ldots (T_{c_kg} f_k)$$
converges in $L^2(\mathrm{X})$ to zero whenever $\|f_i\|_{U^k(\mathrm{X})}=0$ for some $1 \leq i \leq k$.
\item[(ii)]  If  $c_0,\ldots,c_k$ are distinct elements of $\F_p$, the sequence
$$ \E_{g \in \Phi_n} \int_X (T_{c_0g} f_0) \ldots (T_{c_kg} f_k) $$
converges to zero whenever $\|f_i\|_{U^k(\mathrm{X})}=0$ for some $0 \leq i \leq k$.
\item[(iii)]  If $c_0,\ldots,c_k$ are distinct elements of $\F_p$, the sequence $I_{c_0,\ldots,c_k;f_0,\ldots,f_{k}}(g)$ converges in uniform density to zero whenever $\|f_i\|_{U^{k+1}(\mathrm{X})}=0$ for some $0 \leq i \leq k$.
\end{itemize}
\end{corollary}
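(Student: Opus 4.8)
The plan is to derive all three parts of Corollary~\ref{gvn-cor} directly from the two generalized van der Corput lemmas, Lemma~\ref{vdc} and Lemma~\ref{vdc-2}, using nothing beyond the trivial fact that a non-negative sequence with vanishing $\limsup$ converges to zero, the measure-preserving property of the $T_g$, and the Cauchy--Schwarz inequality. For part~(i), the hypotheses match those of Lemma~\ref{vdc} exactly: $c_1,\ldots,c_k$ are distinct and non-zero and $f_1,\ldots,f_k\in L^\infty(\mathrm X)$, so that lemma gives
$$\limsup_{n\to\infty}\Big\|\E_{g\in\Phi_n}(T_{c_1g}f_1)\cdots(T_{c_kg}f_k)\Big\|_{L^2(\mathrm X)}\le\|f_i\|_{U^k(\mathrm X)}\prod_{1\le j\le k:\,j\ne i}\|f_j\|_{L^\infty(\mathrm X)}=0$$
whenever $\|f_i\|_{U^k(\mathrm X)}=0$ for some $i$, and hence the average converges to zero in $L^2(\mathrm X)$.

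For part~(ii), suppose $\|f_i\|_{U^k(\mathrm X)}=0$ for some $0\le i\le k$ and fix an index $j_0\ne i$, which exists since there are $k+1\ge2$ indices. Since each $T_g$ preserves $\mu$, applying $T_{-c_{j_0}g}$ inside the integral gives
$$\int_X(T_{c_0g}f_0)\cdots(T_{c_kg}f_k)\,d\mu=\int_X f_{j_0}\prod_{j\ne j_0}T_{(c_j-c_{j_0})g}f_j\,d\mu,$$
so after averaging over $g$ and applying Cauchy--Schwarz,
$$\Big|\E_{g\in\Phi_n}\int_X(T_{c_0g}f_0)\cdots(T_{c_kg}f_k)\,d\mu\Big|\le\|f_{j_0}\|_{L^2(\mathrm X)}\,\Big\|\E_{g\in\Phi_n}\prod_{j\ne j_0}T_{(c_j-c_{j_0})g}f_j\Big\|_{L^2(\mathrm X)}.$$
The coefficients $(c_j-c_{j_0})_{j\ne j_0}$ are distinct and non-zero, and the index $i$ lies among them, so Lemma~\ref{vdc} applied to this product of $k$ functions bounds the $\limsup$ in $n$ of the right-hand side by $\|f_{j_0}\|_{L^2(\mathrm X)}\|f_i\|_{U^k(\mathrm X)}\prod_{j\ne j_0,\,i}\|f_j\|_{L^\infty(\mathrm X)}=0$, which is the claim.

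For part~(iii), recall that a bounded sequence converges to zero in uniform density precisely when $\lim_{n\to\infty}\sup_{h\in G}\E_{g\in h+\Phi_n}|\sigma(g)|=0$ for \emph{every} F{\o}lner sequence $(\Phi_n)_{n=1}^\infty$. Since Lemma~\ref{vdc-2} holds for any such sequence and gives $\limsup_{n\to\infty}\sup_{h\in G}\E_{g\in h+\Phi_n}|I_{c_0,\ldots,c_k;f_0,\ldots,f_k}(g)|\le\|f_i\|_{U^{k+1}(\mathrm X)}\prod_{j\ne i}\|f_j\|_{L^\infty(\mathrm X)}$, the hypothesis $\|f_i\|_{U^{k+1}(\mathrm X)}=0$ forces this quantity to vanish, giving the claim. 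There is no genuine difficulty here, as all the real work has already been done in Lemmas~\ref{vdc} and~\ref{vdc-2}; the only point requiring a little care is the bookkeeping in~(ii), where one must translate by $T_{-c_{j_0}g}$ with $j_0\ne i$ so that the function whose $U^k$ seminorm vanishes remains inside the multilinear average to which Lemma~\ref{vdc} is applied.
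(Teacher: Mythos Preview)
Your proof is correct and follows essentially the same approach as the paper: part~(i) is immediate from Lemma~\ref{vdc}, part~(ii) translates the integral to factor out one function via Cauchy--Schwarz and then applies Lemma~\ref{vdc} to the remaining $k$-fold product, and part~(iii) is immediate from Lemma~\ref{vdc-2}. The only cosmetic difference is in the bookkeeping for~(ii): the paper first permutes so that $\|f_k\|_{U^k}=0$ and translates so that $c_0=0$ before eliminating $f_0$, whereas you directly choose $j_0\ne i$ and eliminate $f_{j_0}$---these are equivalent manoeuvres.
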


\begin{proof}  The claim (i) is immediate from Lemma \ref{vdc}.  To prove (ii), we may first permute so that $\|f_k\|_{U^k(\mathrm{X})}=0$, and then translate so that $c_0=0$.  The claim then follows from (i) after using Cauchy-Schwarz to eliminate $f_0$.  Finally, (iii) follows from Lemma \ref{vdc-2}.
\end{proof}

We remark that one can also prove (iii) using the structure of Host-Kra measures, after performing an ergodic decomposition; see \cite[Corollary 4.5]{host-kra}.

Theorem \ref{char-thm} is then immediate from Corollary \ref{gvn-cor} and the following result from \cite{bergelson-tao-ziegler}.

\begin{theorem}[Characteristic factor for the $U^k$ norm]\label{char-uk} Let $G = \F_p^\omega$ for a prime $p$, and let $\mathrm{X} = (X, \X, \mu, (T_g)_{g \in G})$ be an ergodic $G$-system.  For each $1 \leq k \leq p$, let $\B_{<k}$ be the sub-$\sigma$-algebra of $\X$ generated by the polynomials $\phi: X \to \R/\Z$ of degree $<k$.  Then there is a factor $(\mathrm{Z}_{k-1}, \pi_{k-1})$ of $\mathrm{X}$ that is equivalent to $\B_{<k}$, and there is a continuous ergodic $k-1$-step Weyl system, with $\mathrm{Z}_k = \mathrm{Z}_{k-1} \times_{\rho_k} U_k$ for all $1 \leq k < p$, some compact abelian $p$-torsion group $U_k$, and some polynomial $(G,\mathrm{Z}_{k-1},U_k)$-cocycle $\rho_k$ of degree $<k$.  Furthermore, if $f \in L^\infty(\mathrm{X})$, then $\|f\|_{U^k(\mathrm{X})} = 0$ if and only if $(\pi_{k-1})_* f = 0$.
\end{theorem}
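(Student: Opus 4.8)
Throughout, the plan is to obtain this statement by repackaging the structural theory of the Gowers--Host--Kra seminorms on $\F_p^\omega$-systems developed in \cite{bergelson-tao-ziegler}: no essentially new argument is needed, and the work consists of matching indexing conventions, verifying $p$-torsion of the structure groups, and upgrading the cocycles to continuous ones via Lemma \ref{polycon}. The two results to quote from \cite{bergelson-tao-ziegler} are: (a) the \emph{inverse theorem} for the $U^k$ seminorm, asserting that $\mathrm{X}$ has a factor, characteristic for $\|\cdot\|_{U^k(\mathrm{X})}$, on which $\|\cdot\|_{U^k}$ is a genuine norm, and that this factor is precisely the one generated by the phase polynomials of degree $<k$; and (b) the \emph{Weyl structure theorem}, asserting that an Abramov $\F_p^\omega$-system $\Abr_{<k}(\mathrm{X})$ (one in which the polynomials of degree $<k$ span $L^2$) admits the structure of a continuous $(k-1)$-step Weyl system provided $k \le \charac(\F_p) = p$ (cf.\ the Remark following the definition of a Weyl system in the introduction).

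First I would take $(\mathrm{Z}_{k-1},\pi_{k-1})$ to be the factor associated to the sub-$\sigma$-algebra $\B_{<k}$, which is $G$-invariant since $\Delta_{g_1}\cdots\Delta_{g_k}(\phi\circ T_g) = (\Delta_{g_1}\cdots\Delta_{g_k}\phi)\circ T_g$, so $T_g$ carries a degree-$<k$ polynomial to one of the same degree. By construction $L^2(\mathrm{Z}_{k-1})$ is spanned by phase polynomials of degree $<k$, so $\mathrm{Z}_{k-1}$ is exactly an Abramov system $\Abr_{<k}(\mathrm{X})$; it is ergodic, being a factor of the ergodic system $\mathrm{X}$. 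Input (b) then realizes $\mathrm{Z}_{k-1}$ as a continuous ergodic $(k-1)$-step Weyl system $U_1 \times_{\rho_2} \cdots \times_{\rho_{k-1}} U_{k-1}$, the cocycles being made continuous by Lemma \ref{polycon}(i). The recursive description $\mathrm{Z}_k = \mathrm{Z}_{k-1} \times_{\rho_k} U_k$ comes from the nesting $\B_{<k} \subseteq \B_{<k+1}$ together with the hierarchical structure of phase polynomials: modulo the degree-$<k$ polynomials, a degree-$<k+1$ phase polynomial descends to a polynomial $(G,\mathrm{Z}_{k-1},U_k)$-cocycle $\rho_k$ of degree $<k$, and adjoining these functions builds the next level of the tower. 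Finally, $U_k$ is $p$-torsion because $G = \F_p^\omega$ and $k \le p$: in this range the polynomials $X \to \R/\Z$ of degree $<k$ on an $\F_p^\omega$-system are $\frac{1}{p}\Z/\Z$-valued (the low-characteristic analogue of the fact that polynomials of degree $<p$ on $\F_p^n$ are $\frac{1}{p}$ times classical $\F_p$-polynomials), so the groups built from them, and their Pontryagin duals $U_k$, satisfy $pu_k=0$.

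It remains to deduce the equivalence $\|f\|_{U^k(\mathrm{X})} = 0 \iff (\pi_{k-1})_* f = 0$. This follows immediately from the factorization $\|f\|_{U^k(\mathrm{X})} = \|(\pi_{k-1})_* f\|_{U^k(\mathrm{Z}_{k-1})}$ (i.e.\ $\mathrm{Z}_{k-1}$ is characteristic for the $U^k$ seminorm) combined with the fact that $\|\cdot\|_{U^k}$ is a genuine norm, not merely a seminorm, on $L^\infty(\mathrm{Z}_{k-1})$ --- both being halves of input (a). Of these, the statement that $\|\cdot\|_{U^k}$ is a norm on $\mathrm{Z}_{k-1}$ is the softer one, resting ultimately on the elementary computation $\|e(\phi)\|_{U^k(\mathrm{X})} = 1$ for a phase polynomial $\phi$ of degree $<k$ (a direct van der Corput estimate in the spirit of the proof of Lemma \ref{vdc}) applied to the generators of $\B_{<k}$, whereas the statement that nothing outside $\B_{<k}$ survives the seminorm --- that $U^k$-uniformity is equivalent to orthogonality to $\B_{<k}$ --- is the deep inverse theorem imported from \cite{bergelson-tao-ziegler}, and this is the step that would be the genuine obstacle were one to reprove the theorem from scratch. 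In the present write-up, with (a) and (b) in hand, the only remaining obstacle is organizational: keeping the degree-versus-step bookkeeping consistent (``order $j$'' corresponding to ``degree $<j+1$''), propagating ergodicity down every level of the Weyl tower so that Lemma \ref{polycon} may be applied at each stage, and checking that the cocycle data produced in \cite{bergelson-tao-ziegler} are of exactly the claimed degrees.
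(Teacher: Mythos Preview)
Your proposal is correct and takes essentially the same approach as the paper: the paper's proof is a two-sentence citation of \cite[Proposition 1.10, Theorem 1.19, Corollary 8.7]{bergelson-tao-ziegler} together with Lemma \ref{polycon} for continuity and the remark that factors of ergodic systems are ergodic, which is exactly the content of your inputs (a), (b) and the surrounding bookkeeping. Your write-up simply unpacks in more detail what those citations contain.
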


\begin{proof}  This follows from \cite[Proposition 1.10]{bergelson-tao-ziegler}, \cite[Theorem 1.19]{bergelson-tao-ziegler} and \cite[Corollary 8.7]{bergelson-tao-ziegler}, using Lemma \ref{polycon} to ensure that the Weyl system obtained is continuous.  The ergodicity of the Weyl systems is automatic because any factor of an ergodic system is again ergodic.
\end{proof}

\begin{remark}  The condition $k \leq p$ was subsequently removed in \cite{tz-low} (but with the important caveat that the groups $U_j$ need no longer be $p$-torsion, but are merely $p^m$-torsion for some $m \geq 1$); however for our application we have $k \leq p$, so we will not need the (more difficult) arguments from \cite{tz-low} here.  It is also possible that the arguments in \cite{szegedy} could be adapted to give an alternate proof of Theorem \ref{char-uk}, although we will not pursue this approach here.
\end{remark}

\section{Some special cases of the limit formulae}

In the next two sections, we will prove the main limit formulae, namely Lemma \ref{equidist} and Lemma \ref{equidist-2}.  In order to motivate the proof of these formulae in the general case, we will discuss some model cases of these results here.

We begin with a special case of Lemma \ref{equidist}, when $p>2$ and $\mathrm{X}$ is a $2$-step Weyl system $\mathrm{X} = U_1 \times_{\rho_2} U_2$, where $\rho_2$ is a polynomial $(G, U_1, U_2)$-cocycle of degree $<2$.  Furthermore we assume (abusing notation slightly) that the base system $U_1$ is also the Kronecker factor $\mathrm{Z}_1(\mathrm{X})$, thus the only polynomials of degree $<2$ on $\mathrm{X}$ are those which are functions of the $U_1$ coordinate only.  The special case of Lemma \ref{equidist} we will discuss is
\begin{equation}\label{test}
\begin{split}
& \lim_{n \to\infty} \E_{g \in \Phi_n} \int_X f_0 (T_g f_1) (T_{2g} f_2) (T_{3g} f_3)\ d\mu = \int_{U_1^2 \times U_2^3} \\
 &\quad f_0(x_1, x_2) f_1(x_1+t_1, x_2+t_2)
f_2(x_1+2t_1, x_2+2t_2+u_2)
f_3(x_1+3t_1, x_2+3t_2+3u_2) \\
&\quad\ dm_{U_1}(x_1) dm_{U_1}(t_1) dm_{U_2}(x_2) dm_{U_2}(t_2) dm_{U_2}(u_2).
\end{split}
\end{equation}
(The factor map $\pi_2$ is not needed in this special case, as it is the identity map.)  To simplify things further, we assume that each function $f_i$ for $i=0,1,2,3$ takes the special form
\begin{equation}\label{fii}
 f_i(x_1,x_2) = e( \phi_{i,2}(x_2) )
\end{equation}
for some additive character (i.e. continuous homomorphism) $\phi_{i,2}: U_2 \to \R/\Z$.  One can (and should) also consider the slightly more general example of functions of the form
\begin{equation}\label{fii-2}
 f_i(x_1,x_2) = e( \phi_{i,1}(x_1) + \phi_{i,2}(x_2) )
\end{equation}
where $\phi_{i,1}: U_1 \to \R/\Z$ is an additive character of $U_1$, as these Fourier-analytic examples then span a dense subspace of $L^2(\mathrm{X})$, but to keep the discussion here simple, we will ignore the lower order terms $\phi_{i,1}$ and focus only on the examples of the form \eqref{fii}.

The verification of \eqref{test} now splits into several cases, depending on the nature of the characters $\phi_{0,2}, \ldots, \phi_{3,2}$.  One easy case is when $\phi_{0,2},\ldots,\phi_{3,2}$ all vanish identically; then both sides of \eqref{test} are clearly equal to $1$.

Next, suppose that $\phi_{3,2}$ vanishes identically, but one of the other $\phi_{i,2}$ is not identically zero; let's say for sake of concreteness that $\phi_{2,2}$ is not identically zero.  Then our task is to show that
\begin{equation}\label{sailor}
\begin{split}
 \lim_{n \to\infty} \E_{g \in \Phi_n} \int_X f_0 (T_g f_1) (T_{2g} f_2) \ d\mu &= \int_{U_2^3} 
  e( \phi_{0,2}(x_2) + \phi_{1,2}(x_2+t_2) + 
 \phi_{2,2}(x_2+2t_2+u_2) )\\ 
 &\qquad\ dm_{U_2}(x_2) dm_{U_2}(t_2) dm_{U_2}(u_2).
\end{split}
\end{equation}
Observe that $(x_2,t_2,u_2)$ varies over $U_2^3$, the tuple $(x_2,x_2+t_2,x_2+2t_2+u_2)$ is unconstrained in $U_2^3$.  In particular, if $(x_2,t_2,u_2)$ is drawn uniformly at random using the Haar measure on $U_2^3$, then $(x_2,x_2+t_2,x_2+2t_2+u_2)$ is also uniformly distributed with this Haar measure.  Thus the right-hand side factors as
$$ \left(\int_{U_2} e(\phi_{0,2})\ d\mu_2\right) \left(\int_{U_2} e(\phi_{1,2})\ d\mu_2\right) \left(\int_{U_2} e(\phi_{2,2})\ d\mu_2\right).$$
By Fourier analysis, the third factor vanishes since $\phi_{2,2}$ is assumed to not be identically zero, so the right-hand side of \eqref{sailor} vanishes.  As for the left-hand side, observe that the function $f_2(x_1,x_2) = e( \phi_{2,2}(x_2))$ has mean zero on every coset of $U_2$ in $U_1 \times U_2$; since we are assuming $U_1 = \mathrm{Z}_1$, this implies that $(\pi_1)_* f_2 = 0$.  By Theorem \ref{char-uk}, this implies that
$$ \|f_2\|_{U^2(\mathrm{X})} = 0.$$
Applying Corollary \ref{gvn-cor}, we conclude that the left-hand side of \eqref{sailor} vanishes also, so we are done in this case.

Finally, we consider the case when $\phi_{3,2}$ does not vanish identically.  We can then simplify the right-hand side of \eqref{test} by noting the extrapolation identity
$$ x_2+3t_2+3u_2 = x_2 - 3 (x_2 + t_2) + 3 (x_2 + 2t_2 + u_2)$$
which allows us to write the right-hand side as
$$
\int_{U_2^3} e\left( \phi'_{0,2}(x_2) + \phi'_{1,2}(x_2+t_2) + 
 \phi'_{2,2}(x_2+2t_2+u_2)\right)\ dm_{U_2}(x_2) dm_{U_2}(t_2) dm_{U_2}(u_2),$$
where $\phi'_{0,2} := \phi_{0,2} + \phi_{3,2}$, $\phi'_{1,2} := \phi_{1,2} - 3\phi_{3,2}$, and $\phi'_{2,2} := \phi_{2,2} + 3 \phi_{3,2}$.  Next, observe that
$$ \Delta_g u_2 = \rho(g,\cdot)$$
for all $g \in G$; since $\rho$ is a polynomial cocycle of degree $<2$, we conclude that $u_2$ is a polynomial of degree $<3$ (i.e. a quadratic function).  For any $g \in G$ and $x \in X$, the sequence $n \mapsto u_2( T_{ng} x )$ is then also a quadratic polynomial.  In particular, we have the interpolation identity
$$ u_2( T_{3g} x ) = u_2(x) - 3 u_2(T_g x) + 3 u_2(T_{2g} x)$$
which allows one to write the left-hand side of \eqref{test} as
$$ \lim_{n \to\infty} \E_{g \in \Phi_n} \int_X f'_0 (T_g f'_1) (T_{2g} f'_2)\ d\mu $$
where $f'_i(x_1,x_2) := e( \phi'_{i,2}(x_2) )$ for $i=0,1,2$.  As such, we see that we have reduced the task of verifying \eqref{test} when $\phi_{3,2}$ does not vanish identically, to the task of verifying \eqref{test} when $\phi_{3,2}$ does vanish identically.  But this case has already been covered by the preceding arguments.  This concludes the demonstration of \eqref{test} for the model functions \eqref{fii}.  The model cases
\eqref{fii-2} can be handled by similar arguments, exploiting the linear nature of $n \mapsto u_1( T_{ng} x)$ in addition to the quadratic nature of $n \mapsto u_2(T_{ng} x)$ to eventually reduce to the case when $\phi_{2,1}, \phi_{3,1}, \phi_{3,2}$ vanish and $\phi_{2,2}$ does not vanish identically, which can then be treated by Theorem \ref{char-uk} and Corollary \ref{gvn-cor} as before; we leave the details to the interested reader (and they are special cases of the argument in Section \ref{limit-sec} below).

Now we consider an analogous example for the second limit formula, Lemma \ref{equidist-2}.  Keeping the system $\mathrm{X} = U_1 \times_{\rho_2} U_2$ as before, we now consider the task of showing that
\begin{equation}\label{test-2}
\begin{split}
\int_X f_0 T_g f_1 T_{2g} f_2 T_{3g} f_3\ d\mu &\approx_{UD} \int_{U_1 \times U_2^2} f_0( x_1, x_2 ) f_1( x_1 + \Delta_g u_1, x_2 + t_2 )\\
&\quad f_2( x_1 + 2\Delta_g u_1, x_2 + 2t_2 + \Delta_g^2 u_2 ) f_3( x_1 + 3\Delta_g u_1, x_2 + 3t_2 + 3\Delta_g^2 u_2 )  \\
&\quad\quad dm_{U_1}(x_1) dm_{U_2}(x_2) dm_{U_2}(t_2),
\end{split}
\end{equation}
where we are using the $\approx_{UD}$ notation from the introduction.

Again, we will restrict attention to the model case \eqref{fii} for simplicity.  If the $\phi_{i,2}$ all vanish identically, then the claim is trivial as before.  Now suppose that $\phi_{2,2}$ and $\phi_{3,2}$ both vanish identically, but $\phi_{1,2}$ does not vanish identically.  The right-hand side of \eqref{test-2} then simplifies to
$$ \int_{U_2^2} e( \phi_{0,2}(x_2) + \phi_{1,2}(x_2+t_2) )\ dm_{U_2}(x_2) dm_{U_2}(t_2)$$
which vanishes by a change of variables and Fourier analysis.  Meanwhile, the left-hand side of \eqref{test-2} is $\int_X f_0 T_g f_1\ d\mu$; the non-vanishing of $\phi_{1,2}$ guarantees that $\|f_1\|_{U^2(\mathrm{X})}=0$ by Theorem \ref{char-uk}, and so by Corollary \ref{gvn-cor} the left-hand side goes to zero in uniform density, as required.

Now suppose that $\phi_{3,2}$ vanishes identically, but $\phi_{2,2}$ does not.  For any $g \in G$ and $x \in X$, we consider the sequence $\psi_{2,g,x}: \Z \to U_2$ defined by $\psi_{2,g,x}(n) := u_2( T_{ng} x )$.  As discussed earlier in this section, $\psi_{2,g,x}$ is a quadratic sequence.  However, for fixed $g$, we can also compute the top order coefficient $\Delta_1^2 \psi_{2,g,x}$ of this sequence:
$$ \Delta_1^2 \psi_{2,g,x} = \Delta_g^2 u_2.$$
Note that as $\psi_{2,g,x}$ and $u_2$ are both quadratic, the left and right-hand sides here are constants (i.e. elements of $U_2$).  Thus, $\psi_{2,g,x}$ is not an arbitrary quadratic sequence, but is in fact the sum of the sequence $n \mapsto \binom{n}{2} \Delta_g^2 u_2$ and a linear sequence.  This leads to the need to use an additional Lagrange interpolation formula
$$ \psi_{2,g,x}(2) = - \psi_{2,g,x}(0) +2 \psi_{2,g,x}(1) + \Delta_g^2 u_2$$
which allows one to rewrite the left-hand side of \eqref{test-2} as
$$ e( \Delta_g^2 u_2 ) \int_X f'_0 T_g f'_1\ d\mu$$
where $f'_i := e(\phi'_{i,2})$ for $i=1,2$, $\phi'_{0,2} := \phi_{0,2} - \phi_{2,2}$, and $\phi'_{1,2} := \phi_{1,2} + 2 \phi_{2,2}$.  Similarly, with the help of  the identity
$$ x_2 + 2t_2 + \Delta_g^2 u_2 = - x_2 + 2 (x_2+t_2) + \Delta_g^2 u_2$$
we may rewrite the right-hand side of \eqref{test-2} as
$$ e( \Delta_g^2 u_2 ) \int_{U_1 \times U_2^2} f'_0( x_1, x_2 ) f'_1( x_1 + \Delta_g u_1, x_2 + t_2 )
\ dm_{U_1}(x_1) dm_{U_2}(x_2) dm_{U_2}(t_2).$$
From the previously handled cases of \eqref{fii}, we already know that
$$ \int_X f'_0 T_g f'_1\ d\mu \approx_{UD} \int_{U_1 \times U_2^2} f'_0( x_1, x_2 ) f'_1( x_1 + \Delta_g u_1, x_2 + t_2 )
 \  dm_{U_1}(x_1) dm_{U_2}(x_2) dm_{U_2}(t_2).$$
Multiplying through by the phase  $e( \Delta_g^2 u_2 )$, we obtain \eqref{test-2} in the case that $\phi_{3,2}$ vanishes, but $\phi_{2,2}$ does not necessarily vanish.  A similar calculation (which we omit) then allows one to extend the previous cases to also cover the case when $\phi_{3,2}$ does not necessarily vanish either, giving \eqref{test-2} in all instances of the model case \eqref{fii}.  Again, the addition of the lower order terms in \eqref{fii-2} can be handled by a modification of these arguments, which we leave to the reader (and are special cases of the argument in Section \ref{second-limit} below).

\section{Proof of limit formula}\label{limit-sec}

In this section we prove Lemma \ref{equidist}.  Let $p, G, \mathrm{X}, k, c_0,\ldots,c_k, (\Phi_n)_{n=1}^\infty$ be as in that lemma.  
If $(\pi_{k-1})_* f_i = 0$ for some $i=0,\ldots,k$ then the claim is immediate from Theorem \ref{char-thm}.  By linearity, we may thus reduce to the case when each $f_i$ is a pullback by $(\pi_{k-1})^*$ from the associated function $\tilde f_i := (\pi_{k-1})_* f_i$.
Our task is to show that for any $f_0,\ldots,f_k \in L^\infty(\mathrm{X})$, the expression
\begin{equation}\label{cake}
\E_{g \in \Phi_n} \int_X f_0(T_{c_0 g} x) \ldots f_k(T_{c_kg} x)\ d\mu(x)
\end{equation}
converges as $n \to \infty$ to the integral
\begin{equation}\label{sold}
 \int_{HP_{c_0,\ldots,c_k}(\mathrm{Z}_{k-1})} \tilde f_0 \otimes\ldots \otimes \tilde f_k \ dm_{HP_{c_0,\ldots,c_k}(\mathrm{Z}_{k-1})}.
\end{equation}

As noted after the statement of Lemma \ref{equidist}, $HP_{c_0,\ldots,c_k}(\mathrm{Z}_{k-1})$ surjects onto each of the $k+1$ coordinates of $(Z_{k-1})^{k+1}$.  As such, we can bound \eqref{sold} in magnitude by $\|f_i\|_{L^2(\mathrm{X})}$ for any $0 \leq i \leq k$, if we normalize so that $\|f_j\|_{L^\infty(\mathrm{X})} \leq 1$ for $j \neq i$.  Of course, a similar bound also can be obtained for \eqref{cake}.  By combining these observations with Fourier analysis on the compact abelian group $U_1 \times \ldots \times U_k$ and a limiting\footnote{Here we use the basic fact that an $L^\infty$ function on a compact abelian group can be approximated to arbitrary accuracy in $L^2$ norm by a finite linear combination of multiplicative characters, while still staying uniformly bounded in $L^\infty$.  This can be established for instance by first approximating the function by a continuous function, then using the Stone-Weierstrass theorem.} argument, it suffices to verify this claim under the assumption that each $\tilde f_i, i=0,\ldots,k$ is a tensor product of multiplicative characters, thus
$$ \tilde f_i( u_1,\ldots,u_{k-1}) = \prod_{j=1}^{k-1} e(\phi_{ij}(u_j))$$
for all $u_1 \in U_1,\ldots,u_{k-1} \in U_{k-1}$ and some additive characters (i.e continuous homomorphisms) $\phi_{ij}: U_j \to \R/\Z$ for $i=0,\ldots,k$ and $j=1,\ldots,k-1$.  The expression \eqref{cake} is then equal to
\begin{equation}\label{phi1}
\E_{g \in \Phi_n} \int_X e\left( \sum_{i=0}^k \sum_{j=1}^{k-1} \phi_{ij}\left( \psi_{j,g,x}(c_i) \right) \right)\ d\mu(x) 
\end{equation}
where $\psi_{j,g,x}: \Z \to U_j$ is the (periodic) sequence
\begin{equation}\label{psijg}
\psi_{j,g,x}(n) := u_j\left( \pi_{k-1}( T_{n g} x ) \right)
\end{equation}
and $u_j: Z_{k-1} \to U_j$, $j=1,\ldots,k-1$ are the coordinate functions.  Also, by Fourier analysis, the expression \eqref{sold} is equal to $1$ when we have the identities
\begin{equation}\label{phi2}
 \sum_{i=0}^k \phi_{ij}\left( P_j(c_i) \right) = 0
\end{equation}
for all $j=1,\ldots,k-1$ and all polynomials $P_j: \Z \to U_j$ of degree $<j+1$, and zero otherwise.

The strategy is to use the polynomial structure of the Weyl system to place the additive characters $\phi_{ij}$ in a ``normal form'', at which point the convergence can be deduced from Lemma \ref{vdc}.  This is achieved as follows.
From construction of the Weyl system we have
$$ \Delta_g u_j = \rho_j(g,\cdot)$$
for all $g \in G$ and $j=1,\ldots,k-1$.  Since $\rho_j$ is a polynomial cocycle of degree $<j$, we conclude that $u_j: Z_{k-1} \to U_j$ is a polynomial of degree $<j+1$.  This implies that for any $x \in X$, the sequence $\psi_{j,g,x}$ defined in \eqref{psijg} is a polynomial sequence of degree $<j+1$, and thus has a Taylor expansion of the form
$$ u_j( \pi_{k-1}( T_{ng} x ) ) = \sum_{l=0}^j \binom{n}{l} a_{j,g,x}$$
for some coefficients $a_{j,g,x} \in U_j$.  As the $c_0,\ldots,c_j$ are distinct elements of $\F_p$, we may then use Lagrange interpolation (using the $p$-torsion nature of $U_j$ and the hypothesis $j<p$ to justify any division occuring in the interpolation formula), this implies that one can express $\psi_{j,g,x}(n)$ as a linear combination of $\psi_{j,g,x}(c_0),\ldots,\psi_{j,g,x}(c_j)$, thus
\begin{equation}\label{uj}
 \psi_{j,g,x}(n) = \sum_{i=0}^j b_{n,j,i} \psi_{j,g,x}(c_i)
\end{equation}
for some integer coefficients $b_{n,j,i}$ that do not depend on $g$ or $x$ (but may depend on $p$ and $c_0,\ldots,c_j$).  Indeed, the interpolation formula gives the more general identity
$$ P_j(n) = \sum_{i=0}^j b_{n,j,i} P_j(c_i)$$
for any polynomial $P_j: \Z \to U_j$ of degree $<j+1$, with the same coefficients $b_{n,j,i}$.  In particular, for any $j < i \leq k$, we can rewrite $\phi_{ij}( \psi_{j,g,x}(c_i) )$ in \eqref{phi1} as a linear combination of the $\phi_{ij}( \psi_{j,g,x}(c_0) ), \ldots, \phi_{ij}( \psi_{j,g,x}(c_j) )$, and similarly write $\phi_{ij}(P_j(c_i))$ in \eqref{phi2} as the same linear combination of the $\phi_{ij}(P_j(c_0)),\ldots,\phi_{ij}(P_j(c_j))$.  From this fact, we see that if the additive character $\phi_{ij}$ is not identically zero for some $j < i\leq k$, we may delete that character (and adjust the characters $\phi_{0j},\ldots,\phi_{jj}$ by appropriate multiples of the deleted character) without affecting either \eqref{phi1} or \eqref{phi2}.
Using this observation repeatedly, we see that to prove the convergence of \eqref{phi1} to $1$ when \eqref{phi2} holds and zero otherwise, it suffices to do so under the normalization that $\phi_{ij}=0$ for all $i>j$, which we now assume henceforth.

We now divide the argument into two cases.  If the $\phi_{ij}$ are all identically zero, then the claim is trivial.  Otherwise, we may find $1 \leq j_* \leq k$ such that $\phi_{ij}$ all vanish for $j > j_*$, but $\phi_{i_* j_*}$ is not identically zero for at least one $0 \leq i_* \leq j_*$.  By permuting the $i$ indices, and then readjusting the $\phi_{ij}$ characters for $j < j_*$ as before, we may assume without loss of generality that $i_*=j_*$.

Observe from Lagrange interpolation that if $P_{j_*}: \Z \to U_{j_*}$ is an arbitrary polynomial sequence of degree $<j_*+1$, then the tuple $(P_{j_*}(c_0),\ldots,P_{j_*}(c_{j_*}))$ can take arbitrary values in $U_{j_*}^{j_*+1}$; in particular, as $\phi_{j_* j_*}$ is not identically zero, the identity \eqref{phi2} does \emph{not} hold for $j=j_*$.  Thus, the expression \eqref{sold} vanishes in this case, and our task is now to show that \eqref{phi1} converges to zero.  But from the vanishing of $\phi_{ij}$ when $i > j$ or $j > j_*$, we can write \eqref{phi1} in the form
$$
\E_{g \in \Phi_n} \int_X \left(\prod_{i=0}^{j_*-1} F_i(T_{c_i g} x)\right)  e\left( \phi_{j_* j_*}\left( u_{j_*}( \pi_{k-1}(T_{j_* g} x) ) \right) \right)\ d\mu(x)$$
for some functions $F_0,\ldots,F_{j_*-1} \in L^\infty(\mathrm{X})$ of unit magnitude which do not depend on $g$ or $x$, and whose exact form will not be important to us.  Applying Lemma \ref{vdc}, we conclude that
$$
\limsup_{n \to \infty}
\left|\E_{g \in \Phi_n} \int_X e\left( \sum_{i=0}^k \sum_{j=1}^m \phi_{ij}\left( \psi_{j,g,x}(c_i) \right) \right)\ d\mu(x)\right| \leq \| e( \phi_{j_* j_*}(u_{j_*}(\pi_{k-1})) ) \|_{U^{j_*}(\mathrm{X})}.$$
However, as the character $\phi_{j_* j_*}$ is not identically zero, the function $e( \phi_{j_* j_*}(u_{j_*})$ has mean zero on every coset of $U_{j_*}$ in $U_1 \times \ldots \times U_{k-1}$, and thus
$$ (\pi_{j_*-1})_* \left( e\left( \phi_{j_* j_*}\left(u_{j_*}(\pi_{k-1})\right) \right) \right) = 0.$$
By Theorem \ref{char-uk}, we conclude that
$$ \left\| e\left( \phi_{j_* j_*}\left(u_{j_*}(\pi_{k-1})\right) \right) \right\|_{U^{j_*}(\mathrm{X})} = 0$$
giving the desired convergence of \eqref{phi1} to zero.  This concludes the proof of Lemma \ref{equidist}.

\begin{remark}\label{norm}  The above argument gives a new proof of the convergence of the averages
$$ \E_{g \in \Phi_n} \int_X (T_{c_0 g} f_0) \ldots (T_{c_kg} f_k)\ d\mu$$
as $n \to \infty$ for arbitrary $k \geq 0$, $f_0,\ldots,f_k \in L^\infty(\mathrm{X})$, and $c_0,\ldots,c_k \in \F_p$ (since, after collecting like terms, we can reduce to the case where the $c_0,\ldots,c_k \in \F_p$ are distinct, so that $k < p$).   A modification of the argument also shows convergence in $L^2(\mathrm{X})$ of the averages
\begin{equation}\label{goof}
 \E_{g \in \Phi_n} (T_{c_1 g} f_1) \ldots (T_{c_kg} f_k)
 \end{equation}
for arbitrary $k \geq 0$, $c_1,\ldots,c_k \in \F_p$ and $f_1,\ldots,f_k \in L^\infty(\mathrm{X})$.  We sketch the argument as follows.  Firstly, by collecting like terms and factoring out those terms with $c_i=0$, we may assume that the $c_1,\ldots,c_k$ are distinct and non-zero, so that $k<p$.  By Theorem \ref{char-thm} (as in the proof of Lemma \ref{equidist}), we may assume that each $f_i$ is of the form $f_i = \pi_{k-1}^* \tilde f_i$ for some $\tilde f_i \in L^\infty(\mathrm{Z}_{k-1})$, and then we can use Fourier decomposition as before to assume that each $\tilde f_i$ is the tensor product of characters $e(\phi_{ij})$.  We can then use identities of the form \eqref{uj} (setting $c_0 := 0$) to reduce to the case where the $\phi_{ij}$ vanish for $i>j$, and then one can adapt the preceding argument to show that the average \eqref{goof} either is identically $1$, or converges in norm to zero. We leave the details to the interested reader.
We also remark that the limit value of \eqref{goof} does not depend on the F\o lner sequence $(\Phi_n)$.
\end{remark}

\section{Proof of second limit formula}\label{second-limit}

We now give a proof of Lemma \ref{equidist-2}.  This will be a minor variant of the argument used to prove Lemma \ref{equidist}.

Let $p, G, \mathrm{X}, k, c_0,\ldots,c_k, (\Phi_n)_{n=1}^\infty$ be as in that lemma.  Using Theorem \ref{char-thm} as in the previous section (but with $\mathrm{Z}_k$ as the characteristic factor, instead of $\mathrm{Z}_{k-1}$), we may reduce to the case when each $f_i$ is a pullback by $(\pi_k)^*$ from the associated function $\tilde f_i := (\pi_k)_* f_i$.
Our task is to show that for any $f_0,\ldots,f_k \in L^\infty(\mathrm{X})$, the expression
\begin{equation}\label{cake-2}
|I_{c_0,\ldots,c_k;f_0,\ldots,f_k}(g) - J_{c_0,\ldots,c_k;f_0,\ldots,f_k}(g)|
\end{equation}
converges in uniform density to zero.

Observe that the closed group $HP_{c_0,\ldots,c_k}(\mathrm{Z}_{k})_0$ contains the diagonal group $Z_k^\Delta := \{ (z,\ldots,z): z \in Z_k \} \subset Z_k^{k+1}$, and thus surjects onto each factor $Z_k$.  Translating, we see that the cosets $HP_{c_0,\ldots,c_k}(\mathrm{Z}_{k})_g$ also surject onto each factor $Z_k$. We can then repeat the limiting argument from the previous section and reduce to the case that each $\tilde f_i, i=0,\ldots,k$ is a tensor product of characters, thus
$$ \tilde f_i( u_1,\ldots,u_k) = \prod_{j=1}^k e(\phi_{ij}(u_j))$$
for all $u_1 \in U_1,\ldots,u_k \in U_k$ and some characters (i.e continuous homomorphisms) $\phi_{ij}: U_j \to \R/\Z$ for $i=0,\ldots,k$ and $j=1,\ldots,k$.  For any $g \in G$, the expression $I_{c_0,\ldots,c_k;f_0,\ldots,f_k}(g)$ is then equal to
\begin{equation}\label{phi1-2}
\int_X e( \sum_{i=0}^k \sum_{j=1}^{k} \phi_{ij}( \psi_{j,g,x}(c_i) ) )\ d\mu(x) 
\end{equation}
where
\begin{equation}\label{psij-2}
\psi_{j,g,x}(n) := u_j( \pi_{k}( T_{n g} x ) )
\end{equation}
and $u_j: Z_{k} \to U_j$, $j=1,\ldots,k$ are the coordinate functions.  Meanwhile, the value of $J_{c_0,\ldots,c_k;f_0,\ldots,f_k}(g)$ depends on the behavior of the quantities
\begin{equation}\label{phi2-2}
\sum_{i=0}^k \phi_{ij}( P_j(c_i) ) 
\end{equation}
for $j=1,\ldots,k$, where $P_j$ ranges over all polynomials $P_j: \Z \to U_j$ of degree $<j+1$ with leading coefficient $\Delta_1^j P_j = \Delta_g^j u_j$.  If, for each $j$, the expression \eqref{phi2-2} is equal to a constant $\theta_{j,g} \in \R/\Z$ independent of $P_j$, then the expression $J_{c_0,\ldots,c_k;f_1,\ldots,f_k}(g)$ is equal to $e(\sum_{j=1}^k\theta_{j,g})$.  In all other cases, $J_{c_0,\ldots,c_k;f_1,\ldots,f_k}(g)$ vanishes.

As in the previous section, we use the polynomial structure of the Weyl system to place the characters $\phi_{ij}$ in a ``normal form''.  Fix $g \in G$.  As before, for each $1 \leq j \leq k$ and $x \in \mathrm{X}$, the sequence $\psi_{j,g,x}: \Z \to U_j$ is a polynomial sequence of degree $<j+1$ from $\Z$ to $U_j$.  However, because $g$ is now fixed, we see from \eqref{psij-2} that we have an additional constraint on the top order coefficient of $\psi_{j,g,x}$:
\begin{equation}\label{phin}
 \Delta_1^j \psi_{j,g,x} = \Delta_g^j u_j.
\end{equation}
This additional ($g$-dependent) constraint on $\phi_{j,g,x}$ will allow us to eliminate one further character $\phi_{ij}$ than was possible in the previous section.  Indeed, from \eqref{phin} we see that the modified sequence
$$ n \mapsto \psi_{j,g,x}(n) - \binom{n}{j} \Delta_g^j u_j$$
is now a polynomial sequence of degree $<j$ rather than $<j+1$.  Applying Lagrange interpolation to this polynomial of one lower degree and then rewriting everything in terms of $\psi_{j,g,x}$, one obtains identities of the form
$$
 \psi_{j,g,x}(n) = \sum_{i=0}^{j-1} b'_{n,j,i,g} \psi_{j,g,x}(c_i) + a'_{n,j,i}$$
for all $n \in \Z$ and some coefficients $b'_{n,j,i,g}, a'_{n,j,g} \in \F_p$ that do not depend on $x$.  Furthermore, we have the same identity
$$
 P_j(n) = \sum_{i=0}^{j-1} b'_{n,j,i,g} P_j(c_i) + a'_{n,j,i}$$
for any polynomial $P_j: \Z \to U_j$ of degree $< j+1$ obeying the constraint $\Delta_1^j P_j = \Delta_g^j u_j$.

Because of these identities, we see that if $\phi_{ij}$ is not identically zero for some $1 \leq j \leq i \leq k$, then one can rewrite $\phi_{ij}( \psi_{j,g,x}(c_i) )$ as a linear combination of $\phi_{ij}( \psi_{j,g,x}(c_0) ), \ldots, \phi_{ij}( \psi_{j,g,x}(c_{j-1}) )$ plus a constant independent of $x$, and similarly one can rewrite the expression $\phi_{ij}(P_j(c_i))$ in \eqref{phi2-2} as the same linear combination of $\phi_{ij}(P_j(c_0)),\ldots,\phi_{ij}(P_j(c_{j-1}))$ plus the same constant.  Because of this, we can delete $\phi_{ij}$ (and adjust the characters $\phi_{0j},\ldots,\phi_{j-1,j}$ by appropriate multiples of the deleted character), resulting in $I_{c_0,\ldots,c_k;f_1,\ldots,f_k}(g)$ and
$J_{c_0,\ldots,c_k;f_1,\ldots,f_k}(g)$ being rotated by the same ($g$-dependent) phase shift.  In particular, the expression \eqref{cake-2} remains unchanged by these modifications of the characters $\phi_{ij}$.  By arguing as in the previous section, we may thus reduce to the case when the $\phi_{ij}$ vanish for all $j \leq i \leq k$ (note carefully that this is a slightly stronger vanishing criterion than in the previous section, when we only had $\phi_{ij}$ vanish for $j < i \leq k$).  

As in the preceding section, we now divide into two cases.  If the $\phi_{ij}$ are all identically zero, then the claim is trivial.  Otherwise, we may find $1 \leq j_* \leq k$ such that $\phi_{ij}$ all vanish for $j > j_*$, but $\phi_{i_* j_*}$ is not identically zero for at least one $0 \leq i_* \leq j_*-1$.  By permuting the $i$ indices, and then readjusting the $\phi_{ij}$ characters for $j < j_*$ as before, we may assume without loss of generality that $i_*=j_*-1$.

Fix $g \in G$.  Observe from Lagrange interpolation that if $P_{j_*}: \Z \to U_{j_*}$ is a polynomial sequence of degree $<j_*+1$ that is arbitrary save for obeying the constraint $\Delta_1^{j_*} P_{j_*} = \Delta_g^{j_*} u_{j_*}$, then the sequence $n \mapsto P_{j_*}(n) - \binom{n}{j_*} \Delta_g^{j_*} u_{j_*}$ is an arbitrary polynomial of degree $<j_*$.  In particular, the tuple $(P_{j_*}(c_0),\ldots,P_{j_*}(c_{j_*-1}))$ can take arbitrary values in $U_{j_*}^{j_*}$.  Thus, as $\phi_{j_*-1, j_*}$ is not identically zero, the identity \eqref{phi2} does \emph{not} hold for $j=j_*$, and so $J_{c_0,\ldots,c_k;f_0,\ldots,f_k}(g)$ vanishes for all $g \in G$.  Our task is now to show that $I_{c_0,\ldots,c_k;f_0,\ldots,f_k}(g)$ converges in uniform density to zero.  But from the vanishing of $\phi_{ij}$ when $i \geq j$ or $j > j_*$, we can write \eqref{phi1} in the form
$$
\E_{g \in \Phi_n} \int_X \left(\prod_{i=0}^{j_*-2} F_i(T_{c_i g} x)\right)  e\left( \phi_{j_*-1, j_*}\left( u_{j_*}\left( \pi_{k-1}( T_{c_{j_*-1} g} x) \right) \right) \right)\ d\mu(x)$$
for some functions $F_0,\ldots,F_{j_*-2} \in L^\infty(\mathrm{X})$ of unit magnitude which do not depend on $g$ or $x$.  Arguing as in the previous section, we have
$$ \left\|  e\left( \phi_{j_*-1, j_*}\left( u_{j_*}( \pi_{k-1} ) \right) \right) \right\|_{U^{j_*}(\mathrm{X})} = 0$$
and the claim now follows from Lemma \ref{vdc}.

\section{Proof of structure theorem}\label{struct-sec}

We now prove Proposition \ref{struct}.  Let the notation be as in Lemma \ref{equidist-2}.  Observe that the coset $HP_{c_0,\ldots,c_k}(\mathrm{Z}_{k})_g$ only depends on $g$ through the quantities $\Delta_g^j u_j \in U_j$ for $j=1,\ldots,k$.  Furthermore, the dependence of the integral
$$ J_{c_0,\ldots,c_k;f_0,\ldots,f_k}(g) =
\int_{HP_{c_0,\ldots,c_k}(\mathrm{Z}_{k-1})} (\pi_k)_* f_0 \otimes\ldots \otimes (\pi_k)_* f_k \ dm_{HP_{c_0,\ldots,c_k}(\mathrm{Z}_{k})_{\theta(g)}}.$$
on $\theta(g)$ is continuous (this is easiest to see by first approximating each $(\pi_i)_* f_i$ in $L^2$ norm with a continuous function on the compact group $Z_k$).  From this, we see that it suffices to represent each sequence $\theta_j: g \mapsto \Delta_g^j u_j$ for $j=1,\ldots,k$ in the form
$$ \theta_j(g)= \Delta_g^j u_j = F_j( S_{j,g} y_j )$$
for some continuous $j$-step Weyl system $\mathrm{Y}_j = (Y_j, \Y_j, \nu_j, (S_{j,g})_{g \in G})$, some continuous function $F_j: Y_j \to U_j$, and some point $y_j \in Y$.  As the claim then follows by composing the various continuous maps together (and noting that the product of finitely many continuous Weyl systems of step at most $k$ will be of step at most $k$).

For each $j$, we introduce the form $\Lambda_j: G^j \to U_j$ by the formula
$$ \Lambda_j( g_1,\ldots,g_j) := \Delta_{g_1} \ldots \Delta_{g_j} u_j$$
(again, note that the right-hand side is a constant function and so can be identified with an element of $U_j$).  From the identities $\Delta_g \Delta_h = \Delta_h \Delta_g$ and $\Delta_{g+h} = \Delta_g + \Delta_h + \Delta_g \Delta_h$ (and noting that any $j+1$-fold derivative of $u_j$ vanishes) we see that $\Lambda_j$ is a symmetric multilinear form.  Our task is to establish a representation of the form
\begin{equation}\label{g-rep}
 \Lambda_j(g,\ldots,g) = F_j( S_{j,g} y_j )
\end{equation}
for some continuous $k$-step Weyl system $\mathrm{Y}_j = (Y_j, \Y_j, \nu_j, (S_{j,g})_{g \in G})$, some continuous function $F_j: Y_j \to U_j$, and some point $y_j \in Y$.  

Fix $j$.  To achieve the above goal, we will exploit a dynamical abstraction of the algebraic observation (essentially the binomial formula) that if we define
$$ \Gamma_i(x)(h_1,\ldots,h_{j-i}) := \Lambda_j(h_1,\ldots,h_{j-i}, x^{(i)})$$
for $0 \leq i \leq j$ and $x,h_1,\ldots,h_{j-i} \in G$, where $x^{(i)}$ denotes $i$ copies of $x$, (so in particular $\Gamma_0(x) = \Lambda_j$), then the $\Gamma_i(x): G^{j-i} \to U_j$ are symmetric multilinear forms (where the multilinearity is of course with respect to the field $\F_p$) and we have the shift identity
$$ \Gamma_i(x+g)(h_1,\ldots,h_{j-i}) = \sum_{l=0}^i \binom{l}{i} \Gamma_l(x)( h_1,\ldots,h_{j-i}, g^{(i-l)} )$$
for all $0 \leq i \leq j$ and $x,g,h_1,\ldots,h_{j-i} \in G$.

Now we give the dynamical version of the above identity.  For each $1 \leq i\leq j$, let $V_i$ be the collection of all symmetric multilinear forms $\Gamma_i: G^{j-i} \to U_j$, where the multilinearity is of course with respect to the field $\F_p$.  This space $V_i$ can be viewed as a closed subgroup of the product space $U_i^{G^{j-i}}$ and is thus a compact abelian group.  Set $Y_j := V_1 \times \ldots \times V_j$ with the product $\sigma$-algebra $\Y_j$ and Haar measure $\nu_j$.  We define the shift maps
$$ S_{j,g} ( \Gamma_1,\ldots,\Gamma_j ) = ( S_{j,g} ( \Gamma_1,\ldots,\Gamma_j )_i)_{i=1}^j$$
for $g \in G$ and $\Gamma_i \in V_i$ for $i=1,\ldots,j$ by the formula
\begin{equation}\label{fm}
 S_{j,g} ( \Gamma_1,\ldots,\Gamma_j )_i(h_1,\ldots,h_{j-i}) = \sum_{l=0}^i \binom{i}{l} \Gamma_l( h_1,\ldots,h_{j-i}, g^{(i-l)} )
\end{equation}
with the convention that $\Gamma_0 := \Lambda_j$. We verify that this is an action:
\begin{align*}
S_{j,g'}(S_{j,g} ( \Gamma_1,\ldots,\Gamma_j ))_i(h_1,\ldots,h_{j-i})
&= 
\sum_{l=0}^i \binom{i}{l} S_{j,g} ( \Gamma_1,\ldots,\Gamma_j )_l( h_1,\ldots,h_{j-i}, (g')^{(i-l)} ) \\
&= \sum_{l=0}^i \sum_{m=0}^l \binom{i}{l} \binom{l}{m}
\Gamma_m( h_1,\ldots,h_{j-i}, (g')^{(i-l)}, g^{(l-m)} )  \\
&= \sum_{m=0}^i \binom{i}{m} \sum_{a=0}^{i-m} \binom{i-m}{a} 
\Gamma_m( h_1,\ldots,h_{j-i}, (g')^{(a)}, g^{(i-m-a)} )  \\
&= \sum_{m=0}^i \binom{i}{m} \Gamma_m( h_1,\ldots,h_{j-i}, (g+g')^{(i-m)} )  \\
&= S_{j,g+g'}( \Gamma_1,\ldots,\Gamma_j )_i(h_1,\ldots,h_{j-i})
\end{align*}
where the penultimate equation follows from the symmetry and multilinearity of $\Gamma_m$, and we have implicitly used the fact that the formula \eqref{fm} extends to the $i=0$ case with the convention that $ S_{j,g} ( \Gamma_1,\ldots,\Gamma_j )_0$ and $\Gamma_0$ are both equal to $\Lambda_j$.  

One easily verifies by induction that $\mathrm{Y}_j = (Y_j, \Y_j, \nu_j, (S_{j,g})_{g \in G})$ is a tower
$$ \mathrm{Y}_j = V_1 \times_{\eta_2} V_2 \times_{\eta_3} \ldots \times_{\eta_j} V_j$$
of cocycle extensions, where the $(G,V_1 \times_{\eta_2} \ldots \times_{\eta_{i-1}} V_{i-1}, V_i)$-cocycle $\eta_i$ is defined for $1 \leq i \leq j$ by the formula
\begin{equation}\label{ling}
\eta_i( g, ( \Gamma_1,\ldots,\Gamma_{i-1} ) )(h_1,\ldots,h_{j-i}) = \sum_{l=0}^{i-1} \binom{i}{l} \Gamma_l( h_1,\ldots,h_{j-i}, g^{(i-l)} ).
\end{equation}
For each $1 \leq i \leq j$, we see from \eqref{fm} that whenever one differentiates the coordinate function $v_i: (\Gamma_1,\ldots,\Gamma_l) \mapsto \Gamma_i$ in some direction $j$, one obtains an affine-linear combination of the previous coordinate functions $v_1,\ldots,v_{i-1}$.  In particular, this implies that each coordinate function $v_i$ is a polynomial of degree $<i+1$, which implies from \eqref{ling} that each cocycle $\eta_i$ is a polynomial of degree $<i$.  Thus $\mathrm{Y}_j$ is a $j$-step Weyl system; an easy induction then shows that it is in fact a continuous $j$-step Weyl system.

From \eqref{fm} we have
$$ v_j( S_{j,g}( 0,\ldots,0 ) ) = \Lambda_j(g,\ldots,g);$$
as $v_j: Y_j \to V_j \equiv U_j$ is clearly a continuous function, we obtain the desired representation \eqref{g-rep}.

\section{Khintchine for double recurrence}\label{double-sec}

We now prove Theorem \ref{double}.  Suppose for contradiction that the claim failed.  Then we could find $p, \mathrm{X}, A, \eps$ as in Definition \ref{khin-def} such that the set
$$ \{ \mu( T_{-c_0 g} A \cap T_{-c_1g} A \cap T_{-c_2 g} A ) \geq \mu(A)^3-\eps \}$$
failed to be syndetic.  In particular, the complement of this set contains translates of any given finite set, and in particular must contain a F{\o}lner sequence $(\Phi_n)_{n=1}^\infty$, thus we have
$$ \mu( T_{-c_0 g} A \cap T_{-c_1g} A \cap T_{-c_2 g} A ) < \mu(A)^3-\eps $$
for all $n=1,2,\ldots$ and $g \in \Phi_n$.  We can rewrite this as
\begin{equation}\label{sale}
 \int_X (T_{c_0 g} 1_A) (T_{c_1 g} 1_A) (T_{c_2 g} 1_A)\ d\mu < \mu(A)^3-\eps,
\end{equation}
where $1_A$ is of course the indicator function of $A$.
If we could show that
$$ \lim_{n \to \infty} \E_{g \in \Phi_n}\int_X (T_{c_0 g} 1_A) (T_{c_1 g} 1_A) (T_{c_2 g} 1_A)\ d\mu  > \mu(A)^3-\eps$$
then we would obtain the desired contradiction.  Unfortunately, a direct application of Lemma \ref{equidist} computes the left-hand side as
$$ \int_{U_1} \int_{U_1} f(x+c_0 t) f(x+c_1 t) f(x+c_2 t)\ dm_{U_1}(x) dm_{U_1}(t),$$
where $U_1 = Z_1$ is the Kronecker factor and $f := \pi_1 1_A$, and it is possible\footnote{An explicit example of this phenomenon can be constructed, for large $p$ at least, by adapting the Behrend construction \cite{behrend}, similar to the construction in \cite[Section 2.1]{bhk}, which handled the case $c_i=i$ in which $U_1$ was replaced by $\R/\Z$ and $f$ replaced by an indicator function $1_B$; we omit the details.} for this integral to be significantly smaller than $\mu(A)^3 = (\int_{U_1} f\ dm_{U_1})^3$.  However, we can get around this difficulty by the following trick of Frantzikinakis \cite{nikos} (see also \cite{bhk}).  Observe from H\"older's inequality that
$$
\int_{U_1} f(x) f(x) f(x)\ dm_{U_1}(x) \geq (\int_{U_1} f\ dm_{U_1})^3 = \mu(A)^3.$$
As translations are continuous in (say) the $L^2$ norm, we conclude that
$$
\int_{U_1} f(x+c_0 t) f(x+c_1 t) f(x+c_2 t)\ dm_{U_1}(x) \geq \mu(A)^3 - \eps/2$$
(say) for all $t \in U_1$ sufficiently close to the origin.  In particular, by Urysohn's lemma, we can find a nonnegative continuous function $\eta: U_1 \to \R^+$ with $\int_{U_1} \eta\ dm_{U_1} = 1$ such that
\begin{equation}\label{sale-2}
\int_{U_1} \int_{U_1} \eta(t) f(x+c_0 t) f(x+c_1 t) f(x+c_2 t)\ dm_{U_1}(x) dm_{U_1}(t) \geq \mu(A)^3 - \eps/2.
\end{equation}
We now claim the weighted limit formula
\begin{equation}\label{modlim}
\begin{aligned}
&\lim_{n \to \infty} \E_{g \in \Phi_n} 
\eta(\rho_1(g)) \int_X (T_{c_0 g} 1_A) (T_{c_1 g} 1_A) (T_{c_2 g} 1_A)\ d\mu   \\
&= \int_{U_1} \int_{U_1} \eta(t) f(x+c_0 t) f(x+c_1 t) f(x+c_2 t)\ dm_{U_1}(x) dm_{U_1}(t)   
\end{aligned}\end{equation}
which (on replacing $A$ by all of $X$) gives
$$\lim_{n \to \infty} \E_{g \in \Phi_n} \eta(\rho_1(g)) = 1$$
(this can also be established from the unique ergodicity of the Kronecker factor), and this will gives a contradiction between \eqref{sale} and \eqref{sale-2}.

It remains to establish \eqref{modlim}.  We will in fact show this formula for arbitrary continuous functions $\eta: U_1 \to \C$.  By the Stone-Weierstrass theorem and Fourier analysis, it suffices to establish the claim when $\eta$ is a multiplicative character, thus $\eta = e(\phi)$ for some continuous homomorphism $\phi: U_1 \to \R/\Z$.  But as $c_0,c_1,c_2$ are distinct elements of $\F_p$, there is a Lagrange interpolation identity of the form
$$ t = a_0 (x+c_0 t) + a_1 (x+c_1 t) + a_2 (x+c_2 t)$$
for some integers $a_0,a_1,a_2$ depending only on $c_0,c_1,c_2$.  As such, the right-hand side of \eqref{modlim} can be rewritten as
$$ \int_{U_1} \int_{U_1} f_0(x+c_0 t) f_1(x+c_1 t) f_2(x+c_2 t)\ dm_{U_1}(x) dm_{U_1}(t)$$
where $f_i(x) := f(x) e( a_i \phi(x) )$ for $i=0,1,2$.  Meanwhile, as the shift $(T_{1,g})_{g \in G}$ on the Kronecker system $Z_1 = U_1$ is given by $T_{1,g}: x \mapsto x + \rho_1(g)$ for each group element $g \in G$, we can write
$$ \rho_1(g) = a_0 T_{1,c_0 g}(x) + a_1 T_{1,c_1 g}(x) + a_2 T_{1,c_2 g}(x)
$$
for all $g \in G$ and $x \in U_1$, which implies that
$$ \eta(\rho_1(g)) \int_X (T_{c_0 g} 1_A) (T_{c_1 g} 1_A) (T_{c_2 g} 1_A)\ d\mu = \int_X (T_{c_0 g} \tilde f_0) (T_{c_1 g} \tilde f_1) (T_{c_2 g} \tilde f_2)\ d\mu$$
for any $g \in G$, where $\tilde f_i(x) := 1_A(x) e(a_i \phi(\pi_1(x)))$ for $i=0,1,2$ and $x \in X$.  Since $f_i = (\pi_1)_* \tilde f_i$ for $i=0,1,2$, the claim \eqref{modlim} now follows from Lemma \ref{equidist}.

\section{Khintchine for triple recurrence}\label{triple-sec}

We now give the proof of Theorem \ref{triple}, which follows the lines of Theorem \ref{double} (and is of course also similar to the proof of the analogous claim for $\Z$-systems in \cite{bhk}).  We may permute indices so that $c_0+c_3=c_1+c_2$.  By translating we may normalize $c_0=0$, and then by dilating we may normalize $c_1=1$, so that $c_3 = c_2+1$.

Again, we assume for contradiction that the claim failed, then by arguing as before we can find $p, \mathrm{X}, A, \eps$ verifying the hypotheses of that theorem and a F{\o}lner sequence $(\Phi_n)_{n=1}^\infty$ such that
$$ \int_X 1_A (T_{g} 1_A) (T_{c_2 g} 1_A) (T_{(c_2+1) g} 1_A)\ d\mu < \mu(A)^4 - \eps$$
for all $n =1,2,\ldots$ and $g \in \Phi_n$.  Arguing as before, we see that it suffices to locate a non-negative continuous function $\eta: U_1 \to \R^+$ with $\int_{U_1} \eta\ dm_{U_1} = 1$ such that
$$ \lim_{n \to \infty} \E_{g \in \Phi_n} \eta(\rho_1(g)) \int_X 1_A (T_{g} 1_A) (T_{c_2 g} 1_A) (T_{(c_2+1) g} 1_A)\ d\mu \geq \mu(A)^4 - \eps/2.$$

A direct application of Lemma \ref{equidist} gives
\begin{align*}
&\lim_{n \to \infty} \E_{g \in \Phi_n} \int_X 1_A (T_{g} 1_A) (T_{c_2 g} 1_A) (T_{(c_2+1) g} 1_A)\ d\mu \\
&= 
\int_{U_1^2 \times U_2^3} f(x_1, x_2) f(x_1+t_1, x_2+t_2)
f\left(x_1+c_2 t_1, x_2+c_2 t_2+ \binom{c_2}{2} u_2\right)\\
 &\qquad  \qquad \
f\left(x_1+(c_2+1) t_1, x_2+(c_2+1) t_2+\binom{c_2+1}{2} u_2\right) \\
&\qquad \qquad \  dm_{U_1}(x_1) dm_{U_1}(t_1) dm_{U_2}(x_2) dm_{U_2}(t_2) dm_{U_2}(u_2),
\end{align*}
where we now set $f := (\pi_2)_* 1_A$.  We can twist this identity by characters as in the previous section to conclude the weighted generalization
\begin{align*}
&\lim_{n \to \infty} \E_{g \in \Phi_n} \eta(\rho_1(g)) \int_X 1_A (T_{g} 1_A) (T_{c_2 g} 1_A) (T_{(c_2+1) g} 1_A)\ d\mu  \\
&= 
\int_{U_1^2 \times U_2^3} \eta(t_1)\quad f(x_1, x_2) f(x_1+t_1, x_2+t_2)
f\left(x_1+c_2 t_1, x_2+c_2 t_2+ \binom{c_2}{2} u_2\right) \\
 & \qquad \qquad \ 
f\left(x_1+(c_2+1) t_1, x_2+(c_2+1) t_2+\binom{c_2+1}{2} u_2\right) \\
&\qquad \qquad \  dm_{U_1}(x_1) dm_{U_1}(t_1) dm_{U_2}(x_2) dm_{U_2}(t_2) dm_{U_2}(u_2).
\end{align*}
By Urysohn's lemma and the Fubini-Tonelli theorem, it thus suffices to show that
\begin{align*}
& \int_{U_1 \times U_2^3} \quad f(x_1, x_2) f(x_1+t_1, x_2+t_2)
f\left(x_1+c_2t_1, x_2+c_2t_2+\binom{c_2}{2} u_2\right)\\
& \qquad \qquad \ f\left(x_1+(c_2+1)t_1, x_2+(c_2+1)t_2+\binom{c_2+1}{2} u_2\right) \\
&\qquad \qquad \ dm_{U_1}(x_1) dm_{U_2}(x_2) dm_{U_2}(t_2) dm_{U_2}(u_2) \\
&\quad \geq \mu(A)^4 - \eps/2
\end{align*}
for all $t_1$ sufficiently close to the origin.   As before, this expression is continuous in $t_1$, so it suffices to show that
\begin{align*}
\int_{U_1 \times U_2^3}
 &
  f(x_1, x_2) f(x_1, x_2+t_2)
f\left(x_1, x_2+c_2t_2+\binom{c_2}{2} u_2\right)
f\left(x_1, x_2+(c_2+1)t_2+\binom{c_2+1}{2}u_2\right) \\
&\quad\ dm_{U_1}(x_1) dm_{U_2}(x_2) dm_{U_2}(t_2) dm_{U_2}(u_2) \\
&\quad \geq \mu(A)^4.
\end{align*}
From H\"older's inequality and the Fubini-Tonelli theorem, one has
$$ \int_{U_1} \left(\int_{U_2} f(x_1,x_2)\ dm_{U_2}(x_2)\right)^4 dm_{U_1}(x_1) \geq \left(\int_{U_1 \times U_2} f\ dm_{U_1 \times U_2}\right)^4 = \mu(A)^4$$
and so it suffices to establish the inequality
\begin{equation}\label{temple}
\begin{aligned}
&\int_{U_2^3} F(x_2) F(x_2+t_2) F\left(x_2+c_2t_2+\binom{c_2}{2} u_2\right) F\left(x_2+(c_2+1)t_2+\binom{c_2+1}{2}u_2\right)\\
& \qquad  dm_{U_2}(x_2) dm_{U_2}(t_2) dm_{U_2}(u_2)\\
&\geq (\int_{U_2} F\ dm_{U_2})^4
\end{aligned}\end{equation}
for any real-valued $F \in L^\infty(U_2)$.  

This inequality can be established by Fourier analysis (cf. \cite{bhk} or \cite{green}), but one can also give a Cauchy-Schwarz-based proof as follows.  The starting point is the identity
$$ (c_2-1) x_2 + (c_2+1) \left(x_2+c_2t_2+\binom{c_2}{2}u_2\right) = (c_2-1) \left(x_2+(c_2+1)t_2+\binom{c_2+1}{2}u_2\right) + (c_2+1) (x_2+t_2).$$
From this and some routine computation we see that for any $x, y, y' \in U_2$, there exists a unique triple $(x_2,t_2,u_2)$ such that
\begin{align*}
(c_2-1) x_2 &= y \\
(c_2+1) (x_2+c_2t_2+\binom{c_2}{2}u_2) &= x-y \\
(c_2-1) x_2+(c_2+1)t_2+\binom{c_2+1}{2}u_2 &= y' \\
(c_2+1) x_2+t_2 &= x-y'
\end{align*}
and so we may rewrite the left-hand side of \eqref{temple} after a linear change of variables as
$$ \int_{U_2} \left(\int_{U_2} F\left( (c_2-1)^{-1} y \right) F\left( (c_2+1)^{-1} (x-y) \right)\ dm_{U_2}(y)\right)^2\ dm_{U_2}(x)$$
(note that $c_2-1,c_2+1$ are invertible in $\F_p$).  By Cauchy-Schwarz, this is greater than or equal to
$$ \left(\int_{U_2} \left(\int_{U_2} F\left( (c_2-1)^{-1} y \right) F\left( (c_2+1)^{-1} (x-y) \right)\ dm_{U_2}(y)\right)\ dm_{U_2}(x)\right)^2$$
which by a further linear change of variables is equal to $(\int_{U_2} F\ dm_{U_2})^4$, giving \eqref{temple}.

\section{Counterexamples}\label{counter-sec}

Now we establish Theorem \ref{counter}.  We begin by passing from sets $A$ to functions $f$, which are more convenient from the perspective of building counterexamples.  More precisely, we use the following ``Bernoulli extension'' construction:

\begin{theorem}[Reduction to the function case]\label{counter-2}  Let $p$ be a prime, let $c_0,\ldots,c_k$ be distinct elements of $\F_p$.  Let $G = \F_p^\omega$, and suppose that there exists an ergodic $G$-system $\mathrm{X} = (X,\X,\mu,(T_g)_{g \in G})$, a non-negative function $f \in L^\infty(\mathrm{X})$, and $\eps>0$ such that the set
$$ \left\{ g \in G: \int_X (T_{c_0 g} f) \ldots (T_{c_k g} f)\ d\mu \geq (\int_X f\ d\mu)^{k+1} - \eps \right\}$$
is not syndetic.  Then $(c_0,\ldots,c_k)$ does not obey the Khintchine property in characteristic $p$.
\end{theorem}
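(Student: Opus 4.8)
The plan is to adapt the Bernoulli-extension device of \cite{bhk}: from the function-level counterexample $(\mathrm{X},f,\eps)$ I would manufacture a set-level counterexample by passing to an extension $\tilde{\mathrm{X}}$ of $\mathrm{X}$ in which $f$ is realised as a fibrewise probability. First, a harmless normalisation: the hypothesis forces $f$ not to vanish $\mu$-almost everywhere, since otherwise both sides of the inequality would be $0$ and the exceptional set would be all of $G$, hence syndetic. So I may replace $f$ by $f/\|f\|_{L^\infty(\mathrm{X})}$ and $\eps$ by $\eps/\|f\|_{L^\infty(\mathrm{X})}^{k+1}$, after which $0\leq f\leq 1$ pointwise and the exceptional set is unchanged. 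Next I would introduce the Bernoulli $G$-system on $W:=\prod_{g\in G}[0,1]$ with the product Lebesgue measure $\lambda$ and the shift action $\sigma_h\big((w_g)_{g\in G}\big):=(w_{g+h})_{g\in G}$, and set $\tilde{\mathrm{X}}:=\mathrm{X}\times(W,\lambda,(\sigma_h)_{h\in G})$, with underlying space $\tilde X:=X\times W$, measure $\tilde\mu:=\mu\times\lambda$, and action $\tilde T_h:=T_h\times\sigma_h$; since $G$ is countable this is a separable measure-preserving $G$-system.

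Inside $\tilde{\mathrm{X}}$ I would take the set $\tilde A:=\{(x,(w_g)_{g\in G})\in\tilde X:w_0<f(x)\}$, where the subscript $0$ denotes the coordinate at the identity of $G$, so that $\tilde\mu(\tilde A)=\int_X f\,d\mu$. A point $(x,(w_g)_g)$ lies in $\tilde T_{-c_i g}\tilde A=\tilde T_{c_i g}^{-1}\tilde A$ exactly when $w_{c_i g}<f(T_{c_i g}x)$. The crucial observation is that for $g\neq 0$ the group elements $c_0g,\dots,c_kg$ are pairwise distinct — here the distinctness of the $c_i$ in $\F_p$ is used, since $(c_i-c_j)g\neq 0$ when $c_i\neq c_j$ and $g\neq0$ — so the coordinates $w_{c_0g},\dots,w_{c_kg}$ are independent under $\lambda$, and Fubini gives
\[
\tilde\mu\Big(\bigcap_{i=0}^{k}\tilde T_{-c_i g}\tilde A\Big)=\int_X\prod_{i=0}^{k}f(T_{c_i g}x)\,d\mu(x)=\int_X(T_{c_0 g}f)\cdots(T_{c_k g}f)\,d\mu
\]
for every $g\neq0$. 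Hence the set $\{g:\tilde\mu(\bigcap_i\tilde T_{-c_i g}\tilde A)\geq\tilde\mu(\tilde A)^{k+1}-\eps\}$ agrees, off the single point $g=0$, with the set $\{g:\int_X(T_{c_0 g}f)\cdots(T_{c_k g}f)\,d\mu\geq(\int_X f\,d\mu)^{k+1}-\eps\}$, which is not syndetic by hypothesis; since adjoining or deleting one point does not change whether a subset of the infinite group $G$ is syndetic (a finite cover of $G$ by translates of $B\cup\{0\}$ becomes, on replacing each translate of $\{0\}$ by a translate of $B$ through a fixed point of $B$, a finite cover by translates of $B$), the former set is not syndetic either.

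The only input that is not pure bookkeeping is that $\tilde{\mathrm{X}}$ must be \emph{ergodic} in order to apply Definition \ref{khin-def}: for this I would invoke the standard facts that a Bernoulli $G$-system over a countable group is weakly mixing and that the product of an ergodic system with a weakly mixing system is ergodic. Granting this, the triple $(\tilde{\mathrm{X}},\tilde A,\eps)$ witnesses the failure of the Khintchine property for $(c_0,\dots,c_k)$ in characteristic $p$, and the proof is complete. I do not expect a serious obstacle here; the two points worth writing out with care are this ergodicity verification and the careful tracking of the $g=0$ exception when comparing the two exceptional sets.
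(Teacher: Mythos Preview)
Your proposal is correct and follows essentially the same Bernoulli-extension argument as the paper: normalise $f$ to take values in $[0,1]$, form the product of $\mathrm{X}$ with the Bernoulli $G$-shift on $[0,1]^G$, take $\tilde A=\{(x,(w_g)):w_0<f(x)\}$, and read off the multiple-intersection measures via Fubini, invoking the weak-mixing of Bernoulli shifts to get ergodicity of the product. Your explicit handling of the $g=0$ case is in fact more careful than the paper's, which asserts the identity ``for any $g\in G$'' without comment; as you observe, the single point $g=0$ is harmless for syndeticity, so the argument goes through either way.
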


\begin{proof}
We turn to the details.  Let the notation and hypotheses be as in Theorem \ref{counter-2}.   By rescaling, we may assume that $f$ takes values in $[0,1]$.

We now construct a new system $\mathrm{Y} = (Y, \Y, \mu_Y, (S_g)_{g \in G})$ as follows.  The underlying space will be $X \times [0,1]^G$, with the measure $\mu_Y$ given by the product of $\mu_X$ and the $G$-fold product of uniform measure on $[0,1]$, and similarly for the $\sigma$-algebra $\Y$.  The shift $S_g$ will be given by the formula
$$ S_g( (x, (t_h)_{h \in G}) ) := (T_g x, (t_{h+g})_{h \in G} ).$$
One easily verifies that $\mathrm{Y}$ is a $G$-system, and that $(\mathrm{X}, \pi)$ is a factor of this system where $\pi: Y \to X$ is the projection map onto $X$.  Indeed, the system $\mathrm{Y}$ is the product of $\mathrm{X}$ and a Bernoulli system; as the latter system is weakly mixing, any product of such a system with an ergodic system is ergodic (see e.g. \cite[Proposition 4.5]{furst-book} or \cite[Theorem 4.1]{rue}).  In particular, as $\mathrm{X}$ is ergodic, $\mathrm{Y}$ is ergodic also. 

With $f$ being the function in the hypotheses of Theorem \ref{counter-2}, we now define $A \subset Y$ to be the set
$$ A := \{ (x, (t_h)_{h \in G}): t_0 \leq f(x) \}.$$
This is clearly a measurable set in $Y$, and from the Fubini-Tonelli theorem one has $\mu_Y(A) = \int_X f\ d\mu$.  A further application of Fubini-Tonelli gives
$$ \mu_Y( S_{-c_0 g} A \cap \ldots \cap S_{-c_k g} A ) = \int_X (T_{c_0 g} f) \ldots (T_{c_k g} f)\ d\mu $$
for any $g \in G$, and so we obtain the desired counterexample to $(c_0,\ldots,c_k)$ having the Khintchine property.
\end{proof}

To verify the requirements of Theorem \ref{counter-2}, we use the following ``skew shift'' construction, which reduces the task of demonstrating failure of the Khintchine property to the harmonic analysis task of finding a counterexample to a certain integral inequality.

Let $\T := \prod \F_p $ be the compact group formed as the product of countably many copies of $\F_p$.  
Following the notation of Lemma \ref{equidist-2}, we define the set $HP_{c_0,\ldots,c_k}(\T^m)_{\theta} \subset (\T^m)^{k+1}$ for natural numbers $m,k \geq 1$ and elements $c_0,\ldots,c_k$ of $\F_p$ to be the collection of all tuples $(P(c_0),\ldots,P(c_k))$, where $P: \Z \to \T^m$ can be written in components as $P = (P_1,\ldots,P_m)$, and for each $1 \leq i \leq j$, $P_i$ is a polynomial of degree $<i+1$ with $\partial_1^i P_i = \theta_i$.  

\begin{theorem}\label{loo}  Let $p$ be a prime, and let $c_0,\ldots,c_k$ be distinct elements of $\F_p$.  Suppose that there is a natural number $1 \leq m < p$ and a non-negative function $f \in L^\infty(\T^m)$ such that
$$ \int_{HP_{c_0,\ldots,c_k}(\T^m)_\theta} f \otimes\ldots \otimes f \ dm_{HP_{c_0,\ldots,c_k}(\T^m)_{\theta}} < (\int_{\T^m} f\ dm_{\T^m})^{k+1}$$
for all $\theta = (\theta_1,\ldots,\theta_m) \in \T^m$.  Then $(c_0,\ldots,c_k)$ does not have the Khintchine property.
\end{theorem}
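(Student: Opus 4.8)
The plan is to feed a carefully chosen Weyl system into Theorem~\ref{counter-2}. By that theorem it suffices to produce an ergodic $G$-system $\mathrm{X}$, a non-negative $f'\in L^\infty(\mathrm{X})$, and an $\eps>0$ such that $\{g\in G:\ I_{c_0,\ldots,c_k;f',\ldots,f'}(g)\ge(\int_X f'\,d\mu)^{k+1}-\eps\}$ is not syndetic. We may assume $m\le k$: if $m>k$, the structure groups of degree $>k$ are unconstrained inside $HP_{c_0,\ldots,c_k}(\T^m)_\theta$, so replacing $f$ by its average over those coordinates produces a function on $\T^{k}$ satisfying the same hypothesis with $m$ replaced by $k$. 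We then take $\mathrm{X}$ to be an ergodic $m$-step Weyl $\F_p^\omega$-system whose structure groups $U_1,\ldots,U_m$ are all isomorphic to $\T=\prod\F_p$ (up to isomorphism the only infinite compact metrizable $p$-torsion abelian group), whose underlying measure space is $\T^m$ with its product Haar measure, and we let $f'$ be $f$ under this identification, so that $\int_X f'\,d\mu=\int_{\T^m}f\,dm_{\T^m}$. Since the coordinate functions of an $m$-step Weyl system, composed with characters of the structure groups, are polynomials of degree $<m+1\le k+1$ and generate the whole $\sigma$-algebra, Theorem~\ref{char-thm}(ii) gives $\mathrm{Z}_k(\mathrm{X})=\mathrm{X}$; hence $(\pi_k)_*f'=f'$ and $HP_{c_0,\ldots,c_k}(\mathrm{Z}_k(\mathrm{X}))_{\theta(g)}=HP_{c_0,\ldots,c_k}(\T^m)_{\theta(g)}$, where $\theta(g)=(\Delta_g u_1,\ldots,\Delta_g^m u_m)\in\T^m$ as in Lemma~\ref{equidist-2}.

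Next I would upgrade the hypothesis to a \emph{uniform} gap. By mollifying and truncating we may assume $f$ is continuous and non-negative: each coset $HP_{c_0,\ldots,c_k}(\T^m)_\theta$ surjects onto all $k+1$ copies of $\T^m$, so $h\mapsto\int_{HP_{c_0,\ldots,c_k}(\T^m)_\theta}h\otimes\cdots\otimes h\,dm_{HP_{c_0,\ldots,c_k}(\T^m)_\theta}$ is continuous on $L^{k+1}(\T^m)$ uniformly in $\theta$, and $h\mapsto\int_{\T^m}h\,dm_{\T^m}$ is likewise continuous, so a small $L^{k+1}$ perturbation keeps the strict inequality. For continuous $f$ the map $\theta\mapsto\int_{HP_{c_0,\ldots,c_k}(\T^m)_\theta}f\otimes\cdots\otimes f\,dm_{HP_{c_0,\ldots,c_k}(\T^m)_\theta}$ is continuous on the compact group $\T^m$ (the coset is a continuously varying translate of the fixed closed subgroup $HP_{c_0,\ldots,c_k}(\T^m)_0$, whose Haar measure translates continuously), hence attains its maximum, which by hypothesis is strictly below $(\int_{\T^m}f\,dm_{\T^m})^{k+1}$. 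Writing $\delta>0$ for the gap, we get $\int_{HP_{c_0,\ldots,c_k}(\T^m)_\theta}f\otimes\cdots\otimes f\,dm_{HP_{c_0,\ldots,c_k}(\T^m)_\theta}\le(\int_{\T^m}f\,dm_{\T^m})^{k+1}-\delta$ for \emph{every} $\theta\in\T^m$; this uniformity over $\theta$ is exactly what the ``for all $\theta$'' in the hypothesis buys us, and it dispenses with any need to understand the orbit $g\mapsto\theta(g)$.

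Now I would invoke Lemma~\ref{equidist-2} with $f_0=\cdots=f_k=f'$. By the identifications above, $J_{c_0,\ldots,c_k;f',\ldots,f'}(g)=\int_{HP_{c_0,\ldots,c_k}(\T^m)_{\theta(g)}}f\otimes\cdots\otimes f\,dm_{HP_{c_0,\ldots,c_k}(\T^m)_{\theta(g)}}\le(\int_X f'\,d\mu)^{k+1}-\delta$ for \emph{every} $g\in G$, while Lemma~\ref{equidist-2} gives $I_{c_0,\ldots,c_k;f',\ldots,f'}-J_{c_0,\ldots,c_k;f',\ldots,f'}\approx_{UD}0$. Consequently $\{g:\ |I_{c_0,\ldots,c_k;f',\ldots,f'}(g)-J_{c_0,\ldots,c_k;f',\ldots,f'}(g)|\ge\delta/2\}$ has uniform density zero, and it contains $\{g:\ I_{c_0,\ldots,c_k;f',\ldots,f'}(g)\ge(\int_X f'\,d\mu)^{k+1}-\delta/2\}$. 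A syndetic set cannot have uniform density zero — if $G$ is a union of $N$ translates of a set $S'$, averaging the pointwise bound $1_G\le\sum_{i=1}^{N}1_{g_i+S'}$ over a F{\o}lner set forces $\sup_h\E_{g\in h+\Phi_n}1_{S'}(g)\ge 1/N$ — so with $\eps:=\delta/2$ the set above is not syndetic, and Theorem~\ref{counter-2} finishes the proof.

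The main obstacle is the construction in the first paragraph: one needs an \emph{ergodic} $m$-step Weyl $\F_p^\omega$-system realised on $\T^m$ with every structure group infinite. Ordinary skew shifts degenerate over the torsion group $\F_p^\omega$, so one must use ``staircase''-type polynomial cocycles of the kind in the Example of the introduction, and then pass to a countable product (or inverse limit) of such finite Weyl systems, arranging both that each structure group is enlarged to a full copy of $\T$ and that ergodicity is preserved. Checking ergodicity, that the Haar measures and the two descriptions of the Hall--Petresco group match verbatim, is where the genuine work lies; the remaining steps above are a soft compactness-and-uniform-density argument.
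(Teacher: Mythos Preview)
Your outline is sound, and the soft steps (compactness for a uniform gap, uniform-density zero $\Rightarrow$ not syndetic, then Theorem~\ref{counter-2}) are all correct. The piece you flag as missing --- an explicit ergodic $m$-step Weyl $\F_p^\omega$-system on $\T^m$ --- is precisely what the paper supplies, and the construction is clean enough to be worth knowing: identify $G=\F_p^\omega$ with the polynomial ring $\F_p[t]$, embed this in the Laurent-series field $\F_p[t]((1/t))$ so that $\T\cong\F_p[t]((1/t))/\F_p[t]$, choose an \emph{irrational} $\alpha$ in that field, and define a skew shift on $\T^m$ by $T_g(x_i)_{i=1}^m=\bigl(\sum_{j=0}^i\binom{g}{i-j}x_j\bigr)_{i=1}^m$ with the convention $x_0=\alpha$. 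Ergodicity is checked directly by Fourier analysis and van der Corput, using the irrationality of $\alpha$.

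The paper then diverges from your route in a way that saves work. Rather than invoke Lemma~\ref{equidist-2} as a black box --- which, as you correctly anticipate, would force one to check that the Host--Kra tower of the constructed system coincides level by level with the given Weyl tower, so that the two Hall--Petresco groups match --- the paper proves directly a \emph{pointwise} limit formula $\lim_{g\to\infty}|I_{c_0,\ldots,c_k;f,\ldots,f}(g)-J_{c_0,\ldots,c_k;f,\ldots,f}(g)|=0$ in the Fr\'echet-filter sense, with $J$ defined in terms of the concrete group $HP_{c_0,\ldots,c_k}(\T^m)_{\theta(g)}$ and $\theta_i(g)=\binom{g}{i}\alpha$. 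The argument mimics that of Lemma~\ref{equidist-2} (Fourier reduction, Lagrange interpolation, normal form for the characters) but exploits the explicit polynomial dependence on $g$ and a Vandermonde non-vanishing to get honest convergence rather than uniform-density convergence. The payoff is twofold: no reduction to $m\le k$ and no identification of $\mathrm{Z}_k(\mathrm{X})$ with $\mathrm{X}$ is needed, and the conclusion is that $I(g)<(\int f)^{k+1}-\eps/2$ for \emph{all but finitely many} $g$, so the Khintchine set is finite rather than merely of uniform density zero. Your approach via Lemma~\ref{equidist-2} would also close once the system is built and its Host--Kra structure verified, but the paper's direct computation is more self-contained and yields a sharper statement.
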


\begin{proof}  We will use an explicit Weyl system, analogous to a skew-shift system on a torus.  We identify the group $G$ with the polynomial ring $\F_p[t]$ on one generator $t$, by identifying each generator $e_n$ of $G$ with $t^{n-1}$.  We then embed $\F_p[t]$ in the field $\F_p[t]((\frac{1}{t}))$ of half-infinite Laurent series $\sum_{n=-\infty}^d c_n t^n$, which is a locally compact space using the norm $\|\sum_{n=-\infty}^d c_n t^n\| := p^d$ when $c_d \neq 0$ (and with $\|0\|=0$, of course).  The quotient space $\T := \sum_{n=-\infty}^d c_n t^n/\F_p[t]$ is then a compact abelian group that can be identified with $\F_p^\infty$, and has a Haar probability measure $dm_\T$ (one should view $\F_p[t]$, $\F_p[t]((\frac{1}{t}))$, $\T$ as being characteristic $p$ analogues of $\Z$, $\R$, $\R/\Z$ respectively). 

Let $\alpha$ be an element of $\F_p[t]((\frac{1}{t}))$ which is irrational in the sense that it is not of the form $f/g$ for any $f,g \in \F_p[t]$ with $g$ non-zero.  A simple cardinality (or category, or measure) argument shows that irrational $\alpha$ exist, and it is not hard to give concrete examples of irrational elements.  We then construct a $G$-system $\mathrm{X} = (X,\X,\mu,(T_g)_{g \in G})$  by setting $X$ to be the ``torus'' $X := \T^m$ with the product measure $dm_{\T^m}$ and product Borel $\sigma$-algebra, and shifts
\begin{equation}\label{shifty}
 T_g( (x_i)_{i=1}^{m} ) := ( \sum_{j=0}^i \binom{g}{i-j} x_j )_{i=1}^{m} 
\end{equation}
for all $x_1,\ldots,x_{m} \in \T$, where we adopt the convention that $x_0 := \alpha$, and $\binom{g}{i} := \frac{g(g-1)\ldots(g-i+1)}{i!}$ is viewed as an element of $\F_p[t]$ (note that this is well-defined for any $i<p$, which is acceptable for us since $m<p$).  This shift system is a dynamical abstraction of the binomial identity
$$ \binom{h+g}{i} = \sum_{j=0}^i \binom{g}{i-j} \binom{h}{i}$$
for any $h,g \in G$.  

It is easy to see that $\mathrm{X}$ is a $G$-system.  Now we verify that it is ergodic.  By the ergodic theorem, this is equivalent to the assertion that
\begin{equation}\label{laa}
 \lim_{n \to \infty} \E_{g \in \F_p^n} T_g f = \int_X f\ d\mu
\end{equation}
in $L^2$ norm for all $f \in L^2(\mathrm{X})$, and some F{\o}lner sequence $(\Phi_n)_{n=1}^\infty$ of $G$.  By Fourier decomposition and a density argument, it suffices to achieve this for functions $f$ of the form
\begin{equation}\label{fii-alt}
 f(x_1,\ldots,x_{m}) = e_p( a_1 x_1 + \ldots + a_{m} x_{m} )
\end{equation}
for $a_1,\ldots,a_{m} \in \F_p[t]$, where the standard character $e_p: \T \to \C$ is defined by the formula
$$ e_p\left( \sum_{n=-\infty}^d c_n t^n \operatorname{ mod } \F_p[t] \right) := e^{2\pi i c_{-1}/p}.$$

If all the $a_1,\ldots,a_{m}$ vanish, then both sides of \eqref{laa} are clearly equal to $1$, so the claim is trivial in this case.  Now suppose that there is $1 \leq i_* \leq m$ such that $a_{i_*}$ is non-vanishing, but $a_i=0$ for all $i_* < i \leq m$.  We induct on $i$.  If $i=1$ then we have
$$ \E_{g \in \F_p^n} T_g f = (\E_{g \in \F_p^n} e_p( a_1 \alpha g )) f.$$
As $\alpha$ is irrational, $a_1 \alpha$ does not lie in $\F_p[t]$, and a direct calculation shows that this expression converges to zero as $n \to \infty$.  If $i>1$, we need to show that the left-hand side of \eqref{laa} converges to zero.  By the van der Corput lemma (see\footnote{One could also use the $k=1$ case of Lemma \ref{vdc-2} here.} e.g. \cite[Lemma 2.9]{blm}) it suffices to show that
$$ \lim_{n \to \infty} \E_{g \in \F_p^n} \int_X T_g (T_h f \overline{f})\ d\mu = 0$$
for all $h \in G \backslash \{0\}$.  But one easily verifies that $T_h f \overline{f}$ takes the form \eqref{fii-alt} for some tuple $(a_1,\ldots,a_m)$ which is not identically zero, but vanishes in the $a_{i_*},\ldots,a_m$ entries, so that the claim follows from the induction hypothesis.

For each $g \in G$ and $f_0,\ldots,f_k \in L^\infty(\mathrm{X})$, we consider the quantities
\begin{equation}\label{idef}
I_{c_0,\ldots,c_k;f_0,\ldots,f_k}(g) := \int_{X} (T_{c_0 g} f_0) \ldots (T_{c_k g} f_k)\ d\mu
\end{equation}
and
$$ J_{c_0,\ldots,c_k;f_0,\ldots,f_k}(g) := \int_{HP_{c_0,\ldots,c_k}(\T^m)_\theta} f_0 \otimes\ldots \otimes f_k \ dm_{HP_{c_0,\ldots,c_k}(\T^m)_{\theta(g)}}
$$
where $\theta_i(g) := \binom{g}{i} \alpha$.  We will shortly establish the limit formula
\begin{equation}\label{limit-form}
 \lim_{g \to \infty} |I_{c_0,\ldots,c_k;f_0,\ldots,f_k}(g) - J_{c_0,\ldots,c_k;f_0,\ldots,f_k}(g)| = 0;
\end{equation}
this is very similar to Lemma \ref{equidist-2}, but the limit is in the classical sense (using the one-point compactification $G \cup \{\infty\}$ of $G$, or equivalently using the Frechet filter on $G$) rather than in the uniform density sense.  Assume this formula for the moment.  If $f$ is the function in Theorem \ref{loo} (which we identify with an element of $L^\infty(\mathrm{X})$), we see from the hypotheses on $f$ and compactness that there is an $\eps>0$ such that
$$ \int_{HP_{c_0,\ldots,c_k}(\T^m)_\theta} f \otimes\ldots \otimes f \ dm_{HP_{c_0,\ldots,c_k}(\T^m)_{\theta}} < (\int_{X} f\ d\mu)^{k+1} - \eps$$
for all $\theta \in \T^m$; in particular,
$$ J_{c_0,\ldots,c_k;f,\ldots,f}(g) < (\int_X f\ d\mu)^{k+1} - \eps$$
for all $g\in G$, and thus by \eqref{limit-form}
$$ I_{c_0,\ldots,c_k;f,\ldots,f}(g) < (\int_X f\ d\mu)^{k+1} - \eps/2$$
for all but finitely many $g$.  Applying Theorem \ref{counter-2}, we conclude that $(c_0,\ldots,c_k)$ does not have the Khintchine property.

It remains to establish the limit formula \eqref{limit-form}.  We can repeat large portions of the proof of Lemma \ref{equidist-2} to do this.  Indeed, by the same Fourier decomposition used to prove Lemma \ref{equidist-2}, we may assume that each $f_i$ takes the form
\begin{equation}\label{fi-form}
 f_i(x_1,\ldots,x_m) := e_p( \sum_{j=1}^m a_{ij} x_j )
\end{equation}
for some coefficients $a_{ij} \in \F_p[t]$.  By using Lagrange interpolation identities exactly as in the proof of Lemma \ref{equidist-2}, one can reduce to the case where $a_{ij}=0$ whenever $j \leq i \leq k$, and then reduce further to the case where there is $1 \leq j_* \leq m$ such that $a_{j_*-1,j_*} \neq 0$ and $a_{ij}=0$ whenever $i \geq j$ or $j > j_*$.  As in the proof of Lemma \ref{equidist-2}, $J_{c_0,\ldots,c_k;f_0,\ldots,f_k}(g)=0$ in this case, so it remains to show that
$$ \lim_{g \to \infty} I_{c_0,\ldots,c_k;f_0,\ldots,f_k}(g) = 0.$$
Using \eqref{idef}, \eqref{fi-form}, and \eqref{shifty}, the expression $I_{c_0,\ldots,c_k;f_0,\ldots,f_k}(g)$ can be expanded as
$$ \int_{\T^m} e_p( \sum_{i=0}^{j_*-1} \sum_{j=1}^{j_*} a_{ij} (\sum_{l=0}^j \binom{c_i g}{j-l} x_l) )\ dm_{\T^m}(x_1,\ldots,x_m),$$
with the convention $x_0=\alpha$.  By Fourier analysis, this expression vanishes unless one has
\begin{equation}\label{lsum} 
\sum_{i=0}^{j_*-1} \sum_{j=l}^{j_*} a_{ij} \binom{c_i g}{j-l} = 0
\end{equation}
for all $l=1,\ldots,j_*$.  Thus, it will suffice to show that for all but finitely many $g$, the identities \eqref{lsum} do not simultaneously hold for all $l=1,\ldots,j_*$.

Suppose for contradiction that there are infinitely many $g \in G$ for which \eqref{lsum} holds for all $l=1,\ldots,j_*$.  As the left-hand side of \eqref{lsum} is a polynomial in $g$, these polynomials must then vanish identically for each $l$.  In particular, extracting the $g^{j_*-l}$ coefficient of the left-hand side, we conclude that
$$
\sum_{i=0}^{j_*-1} a_{ij_*} c_i^{j_*-l} = 0$$
for all $l=1,\ldots,j_*$.  But as the Vandermonde determinant of the $c_0,\ldots,c_{j_*-1}$ is non-vanishing, this implies that $a_{ij_*}=0$ for all $i=0,\ldots,j_*-1$, giving the desired contradiction.  This establishes \eqref{limit-form}, and Theorem \ref{loo} follows.
\end{proof}

Now we can prove Theorem \ref{counter}.  Fix $k \geq 3$ and $p$.  We say that a property holds for \emph{generic} tuples $(c_0,\ldots,c_k) \in \F_p^{k+1}$ if the number of tuples which fail to have the property is at most $C_k p^k$ for some $C_k$ depending only on $k$.  Thus, for instance, a generic tuple $(c_0,\ldots,c_k)$ has all entries $c_0,\ldots,c_k$ distinct.  Our task is to establish that a generic tuple $(c_0,\ldots,c_k)$ does not obey the Khintchine property.    In view of Theorem \ref{loo} (applied with $m=2$), it will suffice to locate, for each generic tuple, a non-negative function $f \in L^2(\T^2)$ with the property that
$$ \int_{HP_{c_0,\ldots,c_k}(\T^2)_\theta} f \otimes \ldots \otimes f \ dm_{HP_{c_0,\ldots,c_k}(\T^2)_{\theta}} < (\int_{\T^2} f\ dm_{\T^2})^{k+1}$$
for all $\theta = (\theta_1,\theta_2) \in \T^2$.  The left-hand side can be expanded as
$$ \int_{\T^3} \prod_{i=0}^k f(x_1+c_i \theta_1,x_2+c_i t_2 + \binom{c_i}{2} \theta_2)\ dm_{\T^3}(x_1,x_2,t_2).$$
To create this counterexample, we will use the perturbative ansatz
$$ f(x_1,x_2) = 1 + \sum_{(a_1,a_2) \in A} \eps_{a_1,a_2} e_p( a_1 x_1 + a_2 x_2 )$$
where $A$ is a set of non-zero elements of $\F_p[t]^2$ of bounded cardinality (in fact, in our example we will have $|A|=8$) and $\eps_{a_1,a_2}$ are small complex coefficients to be chosen later.  In order for $f$ to be real-valued, we will need $A$ to be symmetric ($A=-A$) and the coefficients $\eps_{a_1,a_2}$ have to obey the symmetry condition
\begin{equation}\label{lamington}
\eps_{-a_1,-a_2} = \overline{\eps_{a_1,a_2}}
\end{equation}
for all $(a_1,a_2) \in A$.  If $A$ is fixed and the $\eps_{a_1,a_2}$ are chosen sufficiently small, then $f$ will be a non-negative element of $L^\infty(\T^2)$.  As $A$ is assumed to not contain $(0,0)$, $f$ will have mean $1$.  It thus suffices to choose $A$ and $\eps_{a_1,a_2}$ as above, for which we have
\begin{equation}\label{too}
 \int_{\T^3} \prod_{i=0}^k f(x_1+c_i \theta_1,x_2+c_i t_2 + \binom{c_i}{2} \theta_2)\ dm_{\T^3}(x_1,x_2,t_2) < 1
\end{equation}
for all $\theta_1,\theta_2 \in \T$.    The left-hand side of \eqref{too} can be expanded as
\begin{align*}
&\sum_{(a_{1,i},a_{2,i}) \in A \cup \{(0,0)\} \hbox{ for } i=0,\ldots,k} \left(\prod_{i=0}^k \eps_{a_{1,i},a_{2,i}}\right) \\
&\quad\quad \int_{\T^3} e_p\left( \sum_{i=0}^k a_{1,i} \left(x_i + c_i \theta_1) + a_{2,i} (x_2+c_i t_2 + \binom{c_i}{2} \theta_2\right) \right)\ dm_{\T^3}(x_1,x_2,t_2),
\end{align*}
with the convention that $\eps_{0,0} := 1$.  The term when all the $(a_{1,i},a_{2,i})$ vanish is $1$.  As for the other terms, they vanish unless one has the identities
\begin{equation}\label{flail}
\begin{split}
\sum_{i=0}^k a_{1,i} &= 0 \\
\sum_{i=0}^k a_{2,i} &= 0 \\
\sum_{i=0}^k a_{2,i} c_i &= 0
\end{split}
\end{equation}
in which case that term is equal to the expression
$$
\left(\prod_{i=0}^k \eps_{a_{1,i},a_{2,i}}\right) e_p\left( \sum_{i=0}^k a_{1,i} c_i \theta_1 + a_{2,i} \binom{c_i}{2} \theta_2 \right).$$
In order to establish \eqref{too}, it will thus suffice to select (for a generic choice of $(c_0,\ldots,c_k)$) a finite symmetric set $A \subset \F_p^2 \backslash \{(0,0)\}$ and sufficiently small coefficients  $\eps_{a_1,a_2}$ for $(a_1,a_2) \in A$ obeying \eqref{lamington} and the following property:
\begin{itemize}
\item  There is at least one choice of tuple $(a_{1,i},a_{2,i})_{i=0}^k \in (A \cup \{(0,0)\})^{k+1}$, not all vanishing, obeying \eqref{flail}.
Furthermore, for all such tuples, one has the additional constraints
\begin{equation}\label{sand}
\sum_{i=0}^k a_{1,i} c_i = 0
\end{equation}
and
\begin{equation}\label{sand-2}
\sum_{i=0}^k a_{1,i} \binom{c_i}{2} = 0
\end{equation}
and
\begin{equation}\label{sand-3}
 \Re \prod_{i=0}^k \eps_{a_{1,i},a_{2,i}} < 0
\end{equation}
\end{itemize}
Indeed, from the previous discussion, the left-hand side of \eqref{too} will be equal to $1$ plus the left-hand side \eqref{sand-3} for all tuples $(a_{1,i}, a_{2,i})_{i=0}^k$, not all vanishing, obeying \eqref{flail}.

It remains to locate $A$ and $(\eps_{a_1,a_2})_{(a_1,a_2) \in A}$ with the stated properties.  We do this as follows.  We first locate a non-trivial quadruple $(\alpha_0,\alpha_1,\alpha_2,\alpha_3) \in \F_p^4$ with the vanishing properties 
\begin{align}
\sum_{i=0}^3 \alpha_i &= 0 \label{c0}\\
\sum_{i=0}^3 \alpha_i c_i &= 0\label{c1}\\
\sum_{i=0}^3 \alpha_i c_i^2 &= 0\label{c2}
\end{align}
(note that these sums are well-defined when $k \geq 3$).  Indeed, one can use the Lagrange interpolation formula to take
\begin{equation}\label{lio}
\alpha_i := \prod_{0 \leq j\leq 3: j \neq i}\frac{1}{c_i-c_j}.
\end{equation}
Note that generically, the $c_i$ are all distinct, so that the $\alpha_i$ in \eqref{lio} are well-defined and non-vanishing.  We then set
$$ A := \{ \sigma ( \alpha_i c_i, \alpha_i ): i=0,1,2,3; \sigma \in \{-1,+1\}\}.$$
This set is clearly symmetric.  Because the $c_i$ are all distinct, the $\alpha_i$ are all non-zero, and the characteristic $p$ is not equal to two, we see that $A$ consists of eight distinct elements of $\F_p^2 \backslash \{(0,0)\}$.  

Now we classify all the tuples $(a_{1,i},a_{2,i})_{i=0}^k \in (A \cup \{(0,0)\})^{k+1}$ obeying \eqref{flail}.  Certainly the tuple when $(a_{1,i},a_{2,i})=(0,0)$ for all $i$ does this.  By \eqref{c0}, \eqref{c1}, \eqref{c2}, we see that the tuples given by 
$$ (a_{1,i}, a_{2,i}) = \sigma 1_{i \leq 3} (\alpha_i c_i, \alpha_i)$$
for $\sigma = +1, -1$ also obeys \eqref{flail}, as well as \eqref{sand}, \eqref{sand-2}, and will also obey \eqref{sand-3} if one chooses the weights $\eps_{a_{1,i},a_{2,i}}$ so that
$$ \Re \prod_{i=0}^3 \eps_{\alpha_i c_i, \alpha_i} < 0$$
which is easily accomplished.  To conclude the construction, it suffices to show that for generic $(c_0,\ldots,c_k)$ there are no other tuples obeying \eqref{flail}.

Suppose for contradiction that we have another tuple $(a_{1,i},a_{2,i})_{i=0}^k \in (A \cup \{(0,0)\})^{k+1}$ obeying \eqref{flail}.  We can write
$$ (a_{1,i}, a_{2,i}) = 1_{i \in B} \sigma_i (\alpha_{j_i} c_{j_i}, \alpha_{j_i} )$$
for some non-empty $B \subset \{0,\ldots,k\}$, and with $\sigma_i \in \{-1,+1\}$ and $j_i \in \{0,1,2,3\}$ for all $i \in B$.  We can exclude the cases when $B = \{0,1,2,3\}$ and $\sigma_i = \sigma$ and $j_i = i$ for all $i \in B$ and some $\sigma = \{-1,+1\}$, since those tuples were already considered.  As the number of possibilities for $B$, $\sigma_i$, $j_i$ depend only on $k$, it suffices to show that for a \emph{fixed} choice of $B, \sigma_i, j_i$ not of the above form, the condition \eqref{flail} fails for generic $(c_0,\ldots,c_k)$.

Fix $B, \sigma_i, j_i$ as above.    The conditions \eqref{flail} can then be written as
\begin{align}
 \sum_{i \in B} \sigma_i \alpha_{j_i} c_{j_i} &= 0 \label{con1}\\
 \sum_{i \in B} \sigma_i \alpha_{j_i} &= 0 \label{con2}\\
 \sum_{i \in B} \sigma_i \alpha_{j_i} c_{i} &= 0.\label{con3}
\end{align}
Suppose first that $B$ contains an element $i_*$ that lies outside of $\{0,1,2,3\}$.  Then the expression $\sum_{i \in B} \sigma_i \alpha_{j_i} c_i$ can be written as $\sigma_{i_*} \alpha_{j_{i_*}} c_{i_*} + Q$ where the quantity $Q$ does not depend on $c_{i_*}$.  Since $\alpha_{j_{i_*}}$ is generically non-zero, we conclude (after first choosing all $c_i$ for $i\neq i_*$, and then observing that generically the constraint \eqref{con3} can hold for at most one $c_{i_*}$) we see that \eqref{con3} fails for generic $(c_0,\ldots,c_k)$, and we are done in this case.

Thus we may assume that $B \subset \{0,1,2,3\}$.  We now focus on \eqref{con2}, which asserts that a certain linear combination of $\alpha_0,\alpha_1,\alpha_2,\alpha_3$ (with coefficients in $\{-4,-3,-2,-1,0,1,2,3,4\}$) vanish.  From \eqref{lio} we may write
\begin{equation}\label{alph}
 \alpha_i = \pm \frac{1}{V} \prod_{0 \leq i' < i'' \leq 3: i',  i''\neq i} (c_{i'} - c_{i''})
\end{equation}
where $V := \prod_{0 \leq i' < i'' \leq 3} (c_{i'}-c_{i''})$ is the Vandermonde determinant.  Thus, \eqref{con2} can be recast as the assertion that a certain linear combination of the polynomials $\prod_{0 \leq i' < i'' \leq 3: i',  i''\neq i} (c_{i'} - c_{i''})$ for $i=0,1,2,3$ vanish.  But it is easy to see that these polynomials are linearly independent (indeed, they each contain a monomial term that is not present in any of the other three polynomials) and so by the Schwarz-Zippel lemma, any non-trivial linear combination of these polynomials is non-zero for generic $(c_0,\ldots,c_k)$.  The only remaining case is when all the coefficients of $\alpha_0,\alpha_1,\alpha_2,\alpha_3$ in \eqref{con2} vanish.  There are two ways this can happen: either $j_0=j_1=j_2=j_3=j$ for some $j$, or (up to permutation) one has $j_0=j_1=j$ and $j_2=j_3=j'$ and $\sigma_0,\sigma_2 = +1$, $\sigma_1, \sigma_3 = -1$ for some $j \neq j'$.

In the former case $j_0=j_1=j_2=j_3=j$, one can cancel $\alpha_j$ from \eqref{con3} asserts a non-trivial linear constraint between $c_0,c_1,c_2,c_3$ with coefficients in $\pm 1$, which then fails for generic choices of $(c_0,\ldots,c_k)$.  Thus we may assume that $j_0=j_1=j$ and $j_2=j_3=j'$ and $\sigma_0,\sigma_2 = +1$, $\sigma_1, \sigma_3 = -1$.  We then turn to \eqref{con3}, which becomes
$$ \alpha_j (c_0-c_1) + \alpha_{j'} (c_2-c_3) = 0$$
which by \eqref{alph} is a constraint of the form
$$ (c_0-c_1) \prod_{0 \leq i' < i'' \leq 3: i',i''\neq j} (c_{i'} -c_{i''}) = \pm
(c_2-c_3) \prod_{0 \leq i' < i'' \leq 3: i',i''\neq j'} (c_{i'} -c_{i''}).$$
By unique factorization, the two polynomials on the left and right-hand sides here are distinct, so by the Schwartz-Zippel lemma, this identity fails for generic $(c_0,\ldots,c_k)$, and the claim follows.

\begin{remark}  The above arguments give an explicit description of the tuples $(c_0,\ldots,c_k)$ for which the Khintchine property is still possible.  It is likely that a further analysis of these exceptional cases (possibly involving modification of the set $A$ and the weights $\eps_{a_{1,i},a_{2,i}}$ will then resolve the conjecture stated in the introduction, but this seems to require a rather large amount of combinatorial and algebraic case checking, and will not be pursued here.
\end{remark}

\begin{remark} Similar counterexamples can be constructed for $\Z$-systems; they are weaker than those based on the Behrend construction given in \cite{bhk}, although they have the benefit of applying to a wider class of coefficients $c_0,\ldots,c_k$.  We leave the details to the interested reader.
\end{remark}

\providecommand{\bysame}{\leavevmode\hbox to3em{\hrulefill}\thinspace}

\end{document}